\newcommand{\1}{ \mathds{1}}
\newcommand{\maru}[1]{{\ooalign{\hfil#1\/\hfil\crcr
\raise.167ex\hbox{\mathhexbox20D}}}}
\newcommand{\ruby}[2]{%
 \leavevmode
 \setbox0=\hbox{#1}%
 \setbox1=\hbox{\tiny #2}%
 \ifdim\wd0>\wd1 \dimen0=\wd0 \end{lemma}se \dimen0=\wd1 \fi
 \hbox{%
   \kanjiskip=0pt plus 2fil
   \xkanjiskip=0pt plus 2fil
   \vbox{%
     \hbox to \dimen0{%
       \tiny \hfil#2\hfil}%
     \nointerlineskip
     \hbox to \dimen0{\mathstrut\hfil#1\hfil}}}}
\DeclareMathOperator*{\fusion}{\boxtimes}
\newcommand{\Z}{\mathbb{Z}}
\newcommand{\C}{\mathbb{C}}
\newcommand{\R}{\mathbb{R}}
\newcommand{\Q}{\mathbb{Q}}
\newcommand{\g}{\mathfrak{g}}
\newcommand{\Fg}{\mathfrak{g}}
\newcommand{\h}{\mathfrak{h}}
\newcommand{\End}{\mathrm{End}}
\newcommand{\Aut}{\mathrm{Aut}\,}
\makeatletter \@addtoreset{equation}{section}
\theoremstyle{plain}
\newtheorem{theorem}{Theorem}[section]
\newtheorem{proposition}[theorem]{Proposition}
\newtheorem{lemma}[theorem]{Lemma}
\newtheorem{corollary}[theorem]{Corollary}
\theoremstyle{definition}
\theoremstyle{remark}
\newtheorem{remark}[theorem]{Remark}
\numberwithin{equation}{section}
\title[Holomorphic vertex operator algebras]{Reverse orbifold construction and uniqueness of holomorphic vertex operator algebras}
 \subjclass[2010]{Primary  17B69}
\author{Ching Hung Lam} %
  \address[C. H. Lam] {Institute of Mathematics, Academia Sinica, Taipei 10617, Taiwan and National Center for Theoretical Sciences of  Taiwan.}
  \email{chlam@math.sinica.edu.tw}
\author[H. Shimakura]{Hiroki Shimakura}%
\address[H. Shimakura]{Graduate School of Information Sciences,
Tohoku University,
Sendai 980-8579, Japan }%
\email {shimakura@tohoku.ac.jp}%
\date{}
\thanks{C.\,H. Lam was partially supported by MoST grant 104-2115-M-001-004-MY3 of Taiwan}
\thanks{H.\ Shimakura was partially supported by JSPS KAKENHI Grant Numbers 26800001 and 17K05154}
\thanks{C.\,H. Lam and H.\ Shimakura were partially supported by JSPS Program for Advancing Strategic International Networks to Accelerate the Circulation of Talented Researchers ``Development of Concentrated Mathematical Center Linking to Wisdom of the Next Generation".}
\newcommand{\sfr}[2]{\leavevmode\kern-.1em
  \raise.5ex\hbox{\the\scriptfont0 #1}\kern-.1em
  /\kern-.15em\lower.25ex\hbox{\the\scriptfont0 #2}}
\begin{document}

\begin{abstract}
In this article, we develop a general technique for proving the uniqueness of holomorphic vertex operator algebras based on the orbifold construction and its ``reverse" process. 
As an application, we prove that the structure of a strongly regular holomorphic vertex operator algebra of central charge $24$ is uniquely determined by its weight one Lie algebra if the Lie algebra has the type $E_{6,3}G_{2,1}^3$, $A_{2,3}^6$ or $A_{5,3}D_{4,3}A_{1,1}^3$.
\end{abstract}
\maketitle


\section{Introduction}

This article is a continuation of our program on classification of holomorphic vertex operator algebras (VOAs) of central charge $24$. In 1993,  Schellekens \cite{Sc93} obtained  a partial classification by
determining possible Lie algebra structures  for the weight one subspaces  of holomorphic VOAs of  central charge $24$. 
There are
$71$ cases in his list but not all  cases were constructed  explicitly at that time. Recently, there is much progress towards the classification; all $71$ cases have been constructed (\cite{FLM,DGM,Lam,LS,LS3,Mi3,SS,EMS,LS4,LLin}) and Schellekens' list has also been reproved in \cite{EMS}. The main technique for construction is often referred as to  ``Orbifold construction". 
When the central charge is $24$, it is also conjectured that the Lie
algebra structure of the weight one subspace will determine the holomorphic VOA structure uniquely.
This would be an analogue of the uniqueness of the Niemeier lattices: the isometry class of a positive-definite even unimodular lattice of rank $24$ is uniquely determined by the root system consisting of the norm $2$ vectors. 
However, this conjecture has been proved only for the following $26$ cases.\footnote{After we 
finished this work, except for the case $V_1=0$, the uniqueness for all the other cases has been proved  by using the main 
idea of this article \cite{EMS2,KLL,LLin,LS18,LS5}. }  
In \cite{DM}, it was shown that if the weight one Lie algebra has Lie rank $24$, then the holomorphic VOA is isomorphic to the lattice VOA associated with one of the $24$ Niemeier lattices.
For the other cases, the classification of holomorphic framed VOAs of central charge $24$ in \cite{LS2} implies that if the weight one Lie algebra has the type $A_{1,2}^{16}$ or $E_{8,2}B_{8,1}$, then the holomorphic VOA structure of central charge $24$ is unique.  Indeed, in these two cases, the subVOA generated by the weight one Lie algebra is framed, and so is the corresponding holomorphic VOA.

In this article, we discuss a general technique for proving the uniqueness of holomorphic vertex operator algebras. 
The key idea is to reverse the orbifold construction based on the following simple observation: 

Let $V$ be a (strongly regular) holomorphic VOA and let $\sigma$ be an order $n$ automorphism of $V$.
Assume that $\sigma$ satisfies certain condition (see Condition (I) in Section \ref{S:Orb}) so that one can apply the $\Z_n$-orbifold construction to $V$ and $\sigma$ (cf. \cite{EMS,MoPhD}).
Then the resulting holomorphic VOA $\widetilde{V}_\sigma$ is a $\Z_n$-graded simple current extension of the $\sigma$-fixed-point subalgebra $V^\sigma$.
The $\Z_n$-grading of $\widetilde{V}_\sigma$ gives rise to an order $n$ automorphism $\tau$ of $\widetilde{V}_\sigma$.
Clearly, $(\widetilde{V}_\sigma)^\tau=V^\sigma$.
In addition, the $\Z_n$-orbifold construction associated with $\widetilde{V}_\sigma$ and $\tau$ gives the original holomorphic VOA $V$ as a $\Z_n$-graded simple current extension of $(\widetilde{V}_\sigma)^\tau$, that is, $\widetilde{(\widetilde{V}_\sigma)}_\tau\cong V$ (see Corollary \ref{C:RevO}).
We call this procedure the \emph{reverse orbifold construction}, which is called the inverse orbifold in \cite{EMS,MoPhD}.

Now, we will explain our hypotheses for the uniqueness results.
Let $\g$ be a Lie algebra, $\mathfrak{p}$ a subalgebra of $\g$.
Let $n$ be a positive integer and let $W$ be a strongly regular holomorphic VOA of central charge $c$.
\begin{enumerate}[(i)]
\item For any holomorphic VOA $V$ of central charge $c$ with $V_1\cong\g$, there exists an order $n$ automorphism $\sigma$ of $V$ such that $\widetilde{V}_{\sigma}\cong W$ and $V_1^{\sigma}\cong\mathfrak{p}$.
\item Any order $n$ automorphism $\varphi$ of $W$ belongs to a unique conjugacy class if $(W^\varphi)_1\cong\mathfrak{p}$ and $(\widetilde{W}_\varphi)_1\cong\g$.  
\end{enumerate}

Under the above hypotheses, it is easy to show that any holomorphic 
VOA $V$ of central charge $c$ with $V_1\cong \g$ is isomorphic to $\widetilde{W}_\varphi$ (see 
Section \ref{sec:4.2}).
Indeed, Hypothesis (i) implies $\widetilde{V}_{\sigma}\cong W$, and the reverse orbifold construction shows $\widetilde{W}_\tau\cong V$.
In addition, Hypothesis (ii) implies that $\tau$ and $\varphi$ are conjugate, and hence $V\cong \widetilde{W}_\tau\cong\widetilde{W}_\varphi$.
In particular, the holomorphic VOA structure of central charge $c$ with weight one Lie algebra $\g$ is unique.
The similar arguments have also been used in \cite{LY07,LS2} for proving the uniqueness of certain framed VOAs.
As an application, we prove the following main theorem:

\begin{theorem}\label{Thm:main} The structure of a strongly regular holomorphic vertex operator algebra of central charge $24$ is uniquely determined by its weight one Lie algebra if the Lie algebra has the type $E_{6,3}G_{2,1}^3$, $A_{2,3}^6$ or $A_{5,3}D_{4,3}A_{1,1}^3$.
\end{theorem}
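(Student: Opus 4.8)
The plan is to establish each of the three cases by applying Theorem \ref{Thm:Rev} with $n=3$; the $\Z_3$-orbifold construction is the natural tool here because each weight one algebra $\g$ has its principal simple factors at level $3$. For each $\g$ I would take the target holomorphic VOA $W$ to be a Niemeier lattice VOA $V_N$, whose uniqueness among strongly regular holomorphic VOAs of central charge $24$ is guaranteed by the Dong--Mason theorem \cite{DM} and whose automorphism group is explicitly understood. The subalgebra $\mathfrak{p}$ is then the fixed point subalgebra $\g^\sigma$ of the order $3$ automorphism $\sigma$ constructed below: for $\g=A_{2,3}^6$ it is abelian of rank $12$, so that $N$ is the Leech lattice and $W=V_\Lambda$, whereas for $E_{6,3}G_{2,1}^3$ and $A_{5,3}D_{4,3}A_{1,1}^3$ it is a non-abelian reductive algebra of rank $12$ and $N$ has a nonempty root system. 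Verifying Hypotheses (i) and (ii) for these data and invoking Theorem \ref{Thm:Rev} yields the uniqueness statement.

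To establish Hypothesis (i), I would take an arbitrary holomorphic VOA $V$ of central charge $24$ with $V_1\cong\g$ and let $\sigma=\exp(2\pi\sqrt{-1}\,h_{(0)})$ be the inner automorphism attached to an element $h$ of a Cartan subalgebra of $\g$. Choosing $h$ by the Kac coordinates of an order $3$ automorphism, $\sigma$ fixes the Cartan subalgebra pointwise and multiplies each root space by a cube root of unity; the Cartan together with the root spaces on which $\sigma$ acts trivially then constitute $\g^\sigma\cong\mathfrak{p}$, and one arranges the choice so that $\sigma$ satisfies Condition (I) of Section \ref{S:Orb}. After determining the conformal weight of the irreducible $\sigma$-twisted $V$-module, one applies the $\Z_3$-orbifold construction and finds that the twisted sectors supply the missing $12$ Cartan directions, so that the weight one Lie algebra of $\widetilde V_\sigma$ attains the full rank $24$. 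The Dong--Mason theorem then forces $\widetilde V_\sigma\cong V_N$, and comparing $\mathfrak{p}$ with the root system of $N$ pins down the Niemeier lattice.

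For Hypothesis (ii), I would classify up to conjugacy the order $3$ automorphisms $\varphi$ of $V_N$ satisfying $(V_N^\varphi)_1\cong\mathfrak{p}$ and $(\widetilde{(V_N)}_\varphi)_1\cong\g$. Using the description of $\Aut(V_N)$---generated by the inner automorphisms coming from $(V_N)_1$ together with lifts of the isometries in $O(N)$---the fixed subalgebra condition $(V_N^\varphi)_1\cong\mathfrak{p}$ pins down $\varphi$ as an automorphism of the reductive Lie algebra $(V_N)_1$: through Kac's classification of finite order automorphisms when $(V_N)_1$ is semisimple, and through the conjugacy classes of order $3$ isometries of $\Lambda$ in the Leech case. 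The remaining candidates are then eliminated by the orbifold condition $(\widetilde{(V_N)}_\varphi)_1\cong\g$, and a short argument inside the relevant centralizer removes the residual inner ambiguity, leaving a single conjugacy class. Theorem \ref{Thm:Rev} now gives $V\cong\widetilde{(V_N)}_\varphi$ in each case.

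The main obstacle is Hypothesis (i), and within it the identification $\widetilde V_\sigma\cong V_N$. One must confirm that the chosen $\sigma$ genuinely satisfies Condition (I)---which requires controlling the conformal weight of the twisted module rather than merely the affine level data---and then compute $(\widetilde V_\sigma)_1$ precisely enough to see that its rank is the full $24$ needed to invoke Dong--Mason. The dimension bookkeeping for the twisted sectors, together with the check that the fixed roots of $\sigma$ assemble into the correct Niemeier root system, is the most laborious part. On the side of Hypothesis (ii), the analogous difficulty is to guarantee that order $3$ automorphisms of $V_N$ sharing the fixed subalgebra $\mathfrak{p}$ are genuinely separated by the weight one algebra $\g$ of their orbifolds, so that exactly one conjugacy class survives for each of the three target types.
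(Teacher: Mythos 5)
Your overall skeleton --- invoking Theorem \ref{Thm:Rev} with $n=3$, taking the target $W$ to be a Niemeier lattice VOA, verifying Hypothesis (i) via Condition (I), a dimension count and Proposition \ref{P:NVOA}, and verifying Hypothesis (ii) via Kac's classification \cite{Kac} together with the structure of $\Aut(V_N)$ from \cite{DN} --- is exactly the paper's strategy, and for the types $E_{6,3}G_{2,1}^3$ and $A_{5,3}D_{4,3}A_{1,1}^3$ your choices ($\mathfrak{p}$ non-abelian reductive of rank $12$, $N$ a Niemeier lattice with nonempty root system) agree with what is done in Sections 6 and 8, up to the explicit choice of $h$ and the bookkeeping you defer.

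The genuine gap is your treatment of $A_{2,3}^6$. You take $\sigma$ with $\g^\sigma$ abelian of rank $12$ and conclude that $W$ must be the Leech lattice VOA. The paper instead takes $h=(\Lambda_1,0,0,0,0,0)$, for which $\sigma_h$ acts \emph{trivially} on $V_1$ (since $(\Lambda_1|\alpha)\in\Z$ for every root $\alpha$ of $A_2$) but nontrivially on $V$; thus $\mathfrak{p}=\g=A_{2,3}^6$ and $W=V_{Ni(D_4^6)}$ (Lemma \ref{Lem:fixA23}, Proposition \ref{Prop:D46}, Proposition \ref{Prop:ConjD4}~(1)). Note that even proving $\sigma_h\neq 1$ on $V$ in this situation is delicate and uses the commutant arguments of Theorems \ref{Thm:KMi} and \ref{Thm:HKL} (Lemma \ref{Lem:or3A23}) --- a phenomenon your framework, which presumes $\sigma$ acts nontrivially on $\g$ with fixed points given by Kac coordinates, cannot accommodate. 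Your Leech-lattice route then breaks down at Hypothesis (ii): with this paper's tools there is no way to classify order $3$ automorphisms of the Leech lattice VOA up to conjugacy, because its weight one Lie algebra is abelian, so Kac's theory of automorphisms of semisimple Lie algebras gives no hold, and Lemmas \ref{Lem:con1} and \ref{Lem:M} require the root sublattice to have the same rank as the lattice, which fails precisely for the Leech lattice; your phrase about ``conjugacy classes of order $3$ isometries of $\Lambda$'' ignores the continuous inner part of the automorphism group of the Leech lattice VOA, which is exactly where the difficulty lies. Moreover, Hypothesis (i) on your route is also unverified: for $\widetilde V_\sigma$ to have Lie rank $24$ and abelian weight one space, the dimension formula of Theorem \ref{Thm:Dimformula} forces $\dim V[\sigma]_{1/3}=\dim V[\sigma]_{2/3}=0$, and you give no argument for this vanishing, nor for Condition (I), for your regular $\sigma$. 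So the $A_{2,3}^6$ case of your proposal does not go through as written, while the other two cases are essentially the paper's argument in outline.
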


One of the reasons why we deal with the three Lie algebras above is that the corresponding holomorphic VOAs of central charge $24$ were explicitly constructed in \cite{Mi3,SS} by applying the $\Z_3$-orbifold construction to suitable Niemeier lattice VOAs and order $3$ automorphisms.
Hence we can easily find automorphisms for the reverse orbifold constructions, and, indeed, they are inner.
There is also a dimension formula for the $\Z_3$-orbifold construction (see Section 
\ref{S:Orb}), which plays important roles in verifying Hypothesis (i).

\medskip

The organization of the article is as follows.
In Section 3, we prove the uniqueness of the conjugacy classes of certain order $3$ automorphisms of the lattice VOAs associated with Niemeier lattices with root sublattices of type $E_6^4$ and $D_4^6$, which will prove Hypothesis (ii).
The argument is based on the results in \cite{Kac} about conjugacy classes of automorphisms of finite order of simple Lie algebras and the structure of the automorphism group of a lattice VOA (\cite{FLM,DN}).
In Section 4, we review the general $\Z_n$-orbifold construction from \cite{EMS,MoPhD}.
We also recall a dimension formula about weight one Lie algebras for $\Z_3$-orbifold construction from \cite{Mo,MoPhD}. This formula 
will be useful in determining the Lie 
algebra structure of the resulting holomorphic VOA. In addition, we establish the main 
technique for the uniqueness of a holomorphic VOA using the reverse orbifold construction.
In Sections 5, 6 and 7, we prove the uniqueness of holomorphic VOAs.
For each case, we verify Hypothesis (i) using the results established in Sections 3 and 4.
\begin{center}
{\bf Notations}
\begin{small}
\begin{longtable}{ll}\\
$(\cdot|\cdot)$& positive-definite symmetric bilinear form of a lattice, or \\
& the normalized Killing form so that $(\alpha|\alpha)=2$ for long roots $\alpha$.\\
$\langle\cdot|\cdot\rangle$& the normalized symmetric invariant bilinear form on a VOA $V$\\
& so that $\langle \1|\1\rangle=-1$, equivalently, $\langle a|b\rangle\1=a_{(1)}b$ for $a,b\in V_1$.\\
$\mathfrak{g}^g$& the fixed-point subalgebra of $g$ in a Lie algebra $\mathfrak{g}$.\\
$L_\mathfrak{g}(k,0)$& the simple affine VOA associated with simple Lie algebra $\mathfrak{g}$ at level $k$.\\
$L_\Fg(k,\lambda)$& the irreducible $L_\Fg(k,0)$-module with highest weight $\lambda$.\\
$M^{(h)}$& the $\sigma_h$-twisted $V$-module constructed from a $V$-module $M$ by Li's $\Delta$-operator.\\
$Ni(R)$& a Niemeier lattice with root sublattice $R$.\\
$\Pi(\lambda)$& the set of all weights of the irreducible module with highest weight $\lambda$\\
& over a simple Lie algebra.\\
$\sigma_h$& the inner automorphism $\exp(-2\pi\sqrt{-1}h_{(0)})$ of a VOA $V$ associated with $h\in V_1$.\\
$U(1)$& a $1$-dimensional abelian Lie algebra.\\
$V^{g}$& the set of fixed-points of an automorphism $g$ of a VOA $V$.\\
$X_n$& (the type of) a root system, a simple Lie algebra or a root lattice.\\
$X_{n,k}$& (the type of) a simple Lie algebra whose type is $X_n$ and level is $k$.\\
\end{longtable}
\end{small}
\end{center}

\section{Preliminary}
In this section, we will review some fundamental results about VOAs. Our notations for VOAs  and 
their modules are standard (see \cite{Bo,FLM,FHL}). 
In particular, we use $Y(a,z)=\sum_{i\in\Z}a_{(i)}z^{-i-1}$ (resp. $\1$ and $\omega$) to denote the 
vertex operator associated with $a\in V$ (resp. the vacuum vector and the conformal vector).
For $a\in V$ and $n\in\Z$, we call $a_{(n)}$ the {\it $n$-th mode} of $a$.
A  vertex operator subalgebra (or a {\it subVOA}) of $V$ is said to be a full subVOA  
if it has the same conformal vector as $V$.

For $g\in\Aut (V)$ of order $n$, a $g$-twisted $V$-module $(M,Y_M)$ is a $\C$-graded vector space 
$M=\bigoplus_{m\in\C} M_{m}$ equipped with a linear map
$$Y_M(a,z)=\sum_{i\in(1/n)\Z}a_{(i)}z^{-i-1}\in (\End M)[[z^{1/n},z^{-1/n}]],\quad a\in V$$
satisfying a number of conditions (\cite{DLM2}). We also assume that $L(0)v=kv$ if $v\in M_k$. 
If $M$ is irreducible, then there exists $w\in\C$ such that $M=\bigoplus_{m\in(1/n)\Z_{\geq 
0}}M_{w+m}$ and $M_w\neq0$.
The number $w$ is called the \emph{lowest $L(0)$-weight} of $M$.

A VOA is said to be  {\it rational} if its admissible module category is semisimple.
A rational VOA is said to be {\it holomorphic} if it itself is the only irreducible module up
to isomorphism.
A VOA $V=\bigoplus_{n\in \Z} V_n$ is said to be {\it of CFT-type} if $V_0=\C\1$ (and hence $V_n=0$ for $n<0$),  and is said to 
be {\it $C_2$-cofinite} if the codimension in $V$ of the subspace spanned by the vectors of form 
$u_{(-2)}v$, $u,v\in V$  is finite.
A module is said to be {\it self-dual} if its contragredient module is isomorphic to itself.
A VOA is said to be {\it strongly regular} if it is rational, $C_2$-cofinite, self-dual and of 
CFT-type.

\medskip

\subsection{Weight one Lie algebras} Let $V$ be a VOA of CFT-type.
Then, the weight one subspace $V_1$ has a Lie algebra structure via the $0$-th mode. Moreover, the 
$n$-th modes
$v_{(n)}$, $v\in V_1$, $n\in\Z$, define  an affine representation of the Lie algebra $V_1$ on $V$.
For a simple Lie subalgebra $\mathfrak{s}$ of $V_1$, the {\it level} of $\mathfrak{s}$ is defined to 
be the scalar by which the canonical central element acts on $V$ as the affine representation.
When the type of the root system of $\mathfrak{s}$ is $X_n$ and the level of $\mathfrak{s}$ is $k$, 
we denote by $X_{n,k}$ the type of $\mathfrak{s}$.

Assume that $V$ is self-dual.
Then there exists a symmetric invariant bilinear form $\langle\cdot|\cdot\rangle$ on $V$, which is 
unique up to scalar (\cite{Li3}).
We normalize it so that  $\langle\1|\1\rangle=-1$.
Then for $a,b\in V_1$, we have $\langle a|b\rangle\1=a_{(1)}b$.
For an element $a\in V_1$, $\exp(a_{(0)})$ is an automorphism of $V$.
The subgroup  of $\Aut(V)$  generated by these automorphisms is called the {\it inner automorphism group} and its elements are called {\it inner automorphisms.}
For a semisimple element $h\in V_1$, we often consider the inner automorphism 
$\sigma_h=\exp(-2\pi\sqrt{-1}h_{(0)})$ associated with $h$.

Assume that $V_1$ is semisimple.
Let $\mathfrak{H}$ be a Cartan subalgebra of $V_1$ and $(\cdot|\cdot)$ the Killing form on $V_1$.
We identify the dual $\mathfrak{H}^*$ with $\mathfrak{H}$ via $(\cdot|\cdot)$ and normalize 
$(\cdot|\cdot)$ so that $(\alpha|\alpha)=2$ for any long root $\alpha\in\mathfrak{H}$.
In this article, {\it weights} for $\mathfrak{H}$ are defined via $(\cdot|\cdot)$, that is, the 
weight of a vector $v\in V$ for $\mathfrak{H}$ is $\lambda\in\mathfrak{H}$ if 
$x_{(0)}v=(x|\lambda)v$ for all $x\in\mathfrak{H}$.
Remark that for $h\in\mathfrak{H}$, $\sigma_h$ acts on a vector with the weight $\lambda$ as the 
scalar multiple by $\exp(-2\pi\sqrt{-1}(h|\lambda))$.
The following lemma is immediate from the commutator relations of $n$-th modes (cf.\ 
{\cite[(3.2)]{DM06}}).

\begin{proposition}\label{Prop:level} 
Let $\mathfrak{s}$ be a simple Lie subalgebra of $V_1$ with level $k$.
Then $\langle\cdot|\cdot\rangle=k(\cdot|\cdot)$ on $\mathfrak{s}$.
\end{proposition}

Next we recall some results related to the Lie algebra $V_1$.

\begin{proposition}[{\cite[Theorem 1.1, Corollary 4.3]{DM06}}]\label{Prop:posl} Let $V$ be a 
strongly regular VOA.
Then $V_1$ is reductive.
Let $\mathfrak{s}$ be a simple Lie subalgebra of $V_1$.
Then $V$ is an integrable module for the affine representation of $\mathfrak{s}$ on $V$, and the 
subVOA generated by $\mathfrak{s}$ is isomorphic to the simple affine VOA associated with 
$\mathfrak{s}$ at positive integral level.
\end{proposition}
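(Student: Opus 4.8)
The plan is to establish the three assertions in the logical order: first the invariant bilinear form, then the integrability of $V$ over the affinization $\widehat{\mathfrak{s}}$ of a simple subalgebra $\mathfrak{s}\subseteq V_1$, and finally the reductivity of $V_1$, which I would deduce from the integrability machinery. Since $V$ is self-dual and of CFT-type, it carries a nondegenerate symmetric invariant bilinear form $\langle\cdot|\cdot\rangle$ with $\langle V_m|V_n\rangle=0$ for $m\neq n$; restricting to $V_1$ then gives a nondegenerate invariant symmetric form on the Lie algebra $V_1$. I would also record, for primary $a\in V_1$, the adjoint relation $\langle a_{(n)}u|v\rangle=-\langle u|a_{(-n)}v\rangle$, which makes the affine action skew with respect to the form. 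Finally, note that $C_2$-cofiniteness together with CFT-type forces each $V_n$ to be finite-dimensional.

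For integrability, the modes $a_{(m)}$, $a\in\mathfrak{s}$, realize $\widehat{\mathfrak{s}}$ on $V$ with central element acting by the level $k$, so that $\langle\cdot|\cdot\rangle=k(\cdot|\cdot)$ on $\mathfrak{s}$ by Proposition \ref{Prop:level}. It suffices to show that the Chevalley generators act locally nilpotently. Choosing a highest root $\theta$ with root vectors $e_\theta,f_\theta$ and coroot $h_\theta$, the finite generators $(e_{\alpha_i})_{(0)}$ and $(f_{\alpha_i})_{(0)}$ are nilpotent elements of $\mathfrak{s}$ acting on the finite-dimensional $\mathfrak{s}$-modules $V_n$, hence locally nilpotent; the affine generator $e_0=(f_\theta)_{(1)}$ lowers the $L(0)$-grading and is locally nilpotent because $V$ is $\N$-graded. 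The only remaining generator is $f_0=(e_\theta)_{(-1)}$, which raises the grading, and showing that it acts locally nilpotently is the heart of the matter.

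Here the plan is to use $C_2$-cofiniteness. The quotient $R_V=V/C_2(V)$ is a finite-dimensional commutative Poisson algebra graded by the weights of a Cartan subalgebra of $\mathfrak{s}$, in which the class of $(e_\theta)_{(-1)}^{\,m}\1$ represents the $m$-th power of the class of $e_\theta$. Since that class lies in a nonzero weight space and there are only finitely many weights, its powers vanish in $R_V$, so $(e_\theta)_{(-1)}^{\,m}\1\in C_2(V)$ for $m$ large. The technical obstacle, and the step I expect to be hardest, is to upgrade this ``nilpotence modulo $C_2(V)$'' to genuine local nilpotence of $(e_\theta)_{(-1)}$ on all of $V$: this I would do by induction along the $C_2$-filtration, using a $C_2$-spanning set of $V$ by ordered monomials in the modes of a finite generating set and commuting $(e_\theta)_{(-1)}$ past them. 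Granting this, all Chevalley generators act locally nilpotently, so $V$ is an integrable $\widehat{\mathfrak{s}}$-module.

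Finally, $\1$ is annihilated by $a_{(n)}$ for $n\geq 0$ and by $\mathfrak{s}$, so it is a highest-weight vector of weight $k\Lambda_0$; integrability forces this weight to be dominant integral, whence $k\in\Z_{\geq 0}$, and $k>0$ since $\mathfrak{s}$ acts nontrivially. The subVOA generated by $\mathfrak{s}$ is then the irreducible integrable highest-weight module $L(k\Lambda_0)$, that is, the simple affine VOA $L_{\mathfrak{s}}(k,0)$ at positive integral level. For reductivity of $V_1$, I would combine the nondegeneracy of $\langle\cdot|\cdot\rangle$ on $V_1$ with the integrability just established: every simple subalgebra embeds as a simple affine VOA whose modules are completely reducible, and the local nilpotency of nilpotent currents (via the same weight argument in $R_V$) forces the solvable radical of $V_1$ to be central, so that $V_1$ splits as the direct sum of its center and a semisimple part. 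I would follow the method of \cite{DM06} for this last step, the conceptual and technical core of the whole proposition being the local nilpotency of $(e_\theta)_{(-1)}$ obtained from $C_2$-cofiniteness.
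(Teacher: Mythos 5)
The paper offers no proof of Proposition \ref{Prop:posl} to compare against: it is imported verbatim from \cite{DM06}, so the benchmark is Dong--Mason's argument, whose architecture your outline reproduces faithfully --- invariant form from self-duality, finite-dimensionality of each $V_n$ from $C_2$-cofiniteness plus CFT-type, local nilpotence of all affine Chevalley generators except $f_0=(e_\theta)_{(-1)}$ by elementary means, nilpotence of $\overline{e_\theta}$ in $R_V=V/C_2(V)$, and the endgame via integrable highest-weight theory (which does give $k\in\Z_{>0}$, the identification of the subVOA with $L_{\mathfrak{s}}(k,0)$, and then reductivity) once local nilpotence is in hand. But the step you yourself single out as the heart of the matter is genuinely missing, and the tools you list for it are provably insufficient.

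Your upgrade step invokes exactly three inputs: $(e_\theta)_{(-1)}^{m}\1\in C_2(V)$ for $m\gg0$, a spanning set of ordered monomials in modes of a finite generating set, and commutator pushing along the $C_2$-filtration. These hypotheses hold in situations where the conclusion fails. Take $V=V_L$ for an even lattice $L$ of rank $2$ (strongly regular), let $\{h^1,h^2\}$ be an orthonormal basis of $\C\otimes_\Z L$, and set $a=h^1+\sqrt{-1}\,h^2$, $b=h^1-\sqrt{-1}\,h^2$. Then $a_{(0)}a=0$ and $a_{(1)}a=\langle a|a\rangle\1=0$, and $\overline{a}$ is nilpotent in $R_V$ --- indeed \emph{every} positive-weight class in $R_V$ is nilpotent, since $R_V$ is finite-dimensional and graded by $L(0)$-weight with $R_0=\C\overline{\1}$, so your Cartan-weight computation, while correct, yields nothing that a Heisenberg current does not also satisfy. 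Yet $\langle a_{(-1)}^{N}\1\,|\,b_{(-1)}^{N}\1\rangle=\pm N!\,\langle a|b\rangle^{N}\neq0$ (via $\langle a_{(n)}u|v\rangle=-\langle u|a_{(-n)}v\rangle$ and $[a_{(1)},b_{(-1)}]=\langle a|b\rangle\,\mathrm{id}$), so $a_{(-1)}$ is \emph{not} locally nilpotent. Hence any correct argument must feed the feature distinguishing $e_\theta$ from $a$ --- it is an eigenvector of nonzero eigenvalue for the semisimple operator $(h_\theta)_{(0)}$ --- into the local-nilpotence step itself, not merely into the preliminary vanishing in $R_V$. In the spirit of \cite{DM06} this can be done as follows: choosing the finite set $S$ spanning $R_V$ to consist of vectors homogeneous for $L(0)$ and for $(h_\theta)_{(0)}$-weight, the spanning set of monomials $x^1_{(-n_1)}\cdots x^r_{(-n_r)}\1$ with $n_1>\cdots>n_r\ge1$ forces $r=O(\sqrt{n})$ on $V_n$, so all $(h_\theta)_{(0)}$-weights occurring in $V_n$ are $O(\sqrt{n})$; but $(e_\theta)_{(-1)}^{N}v$ sits in $L(0)$-weight $\mathrm{wt}(v)+N$ with $(h_\theta)_{(0)}$-weight growing linearly in $N$, whence it vanishes for $N\gg0$. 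The same lacuna propagates into your reductivity paragraph: a nondegenerate invariant form alone does not make a Lie algebra reductive (solvable quadratic Lie algebras such as the oscillator algebra exist), and ``the same weight argument in $R_V$'' again delivers only nilpotence modulo $C_2(V)$; one needs the weight-growth bound for ad-nilpotent weight vectors in the radical, after which the isotropy-versus-nondegeneracy contradiction you gesture at does complete the proof as in \cite{DM06}.
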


\begin{proposition} [{\cite[(1.1), Theorem 3 and Proposition 4.1]{DMb}}]\label{Prop:V1} Let $V$ be a 
strongly regular holomorphic VOA of central charge $24$.
If the Lie algebra $V_1$ is neither $\{0\}$ nor abelian, then $V_1$ is semisimple, and the conformal 
vectors of $V$ and the subVOA generated by $V_1$ are the same.
In addition, for any simple ideal of $V_1$ at level $k$, the identity$$\frac{h^\vee}{k}=\frac{\dim 
V_1-24}{24}$$
holds, where $h^\vee$ is the dual Coxeter number.
\end{proposition}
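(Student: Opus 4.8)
The three assertions will follow from Zhu's modular invariance together with its refinement to one-point functions with a Cartan insertion. By Proposition \ref{Prop:posl}, $V_1$ is reductive, so write $V_1=\mathfrak{z}\oplus\bigoplus_{i}\mathfrak{s}_i$ with $\mathfrak{z}$ the centre and $\mathfrak{s}_i$ the simple ideals, say at level $k_i$. Let $U$ be the subVOA generated by $V_1$; by Proposition \ref{Prop:posl} it is the tensor product of a rank $\dim\mathfrak{z}$ Heisenberg VOA with the simple affine VOAs $L_{\mathfrak{s}_i}(k_i,0)$, and it carries the Sugawara conformal vector $\omega_U$ of central charge
\[
c_U=\dim\mathfrak{z}+\sum_i\frac{k_i\dim\mathfrak{s}_i}{k_i+h_i^\vee}.
\]
Since $\omega-\omega_U$ is a commuting (coset) Virasoro vector, the commutant $C=\mathrm{Com}_V(U)$ is a strongly regular VOA of central charge $24-c_U$, so that $c_U\le 24$.

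The heart of the argument is the dual Coxeter identity, which I would obtain as follows. Fix a simple ideal $\mathfrak{s}=\mathfrak{s}_i$ with root system $\Phi$ and a Cartan element $h$, and consider the graded trace $Z(\tau,z)=\mathrm{tr}_V\,q^{L(0)-1}e^{2\pi\sqrt{-1}\,z\,h_{(0)}}$. By Zhu's theorem and its Jacobi-form refinement, and because $V$ is holomorphic (so that the multiplier is trivial), $Z(\tau,z)$ is a weak Jacobi form of weight $0$ and index $m=\tfrac12\langle h|h\rangle=\tfrac{k}{2}(h|h)$ for $\mathrm{SL}_2(\Z)$, where I have used Proposition \ref{Prop:level}. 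Its principal part is $q^{-1}+\chi_0(z)+O(q)$ with $\chi_0(z)=\mathrm{tr}_{V_1}e^{2\pi\sqrt{-1}zh_{(0)}}=\dim V_1-|\Phi|+\sum_{\alpha\in\Phi}e^{2\pi\sqrt{-1}z(\alpha|h)}$, since $h_{(0)}$ acts by $(\alpha|h)$ on the root space of $\alpha$ and by $0$ on the remainder of $V_1$. Expanding in $z$ and invoking the weight-$0$ Jacobi development (the heat-operator relation tying the $z^0$- and $z^2$-Taylor coefficients to the index $m$), together with the Casimir identity $\sum_{\alpha\in\Phi}(\alpha|h)^2=2h^\vee(h|h)$, I expect to extract precisely the relation $h^\vee/k=(\dim V_1-24)/24$, valid for each simple ideal.

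Granting the identity, the remaining two claims are formal central-charge bookkeeping. Because $h_i^\vee>0$ and $k_i>0$, the identity forces $\dim V_1>24$; and rewriting $k_i\dim\mathfrak{s}_i/(k_i+h_i^\vee)=24\dim\mathfrak{s}_i/\dim V_1$ gives
\[
c_U=\dim\mathfrak{z}+\frac{24(\dim V_1-\dim\mathfrak{z})}{\dim V_1}=24+\dim\mathfrak{z}\cdot\frac{\dim V_1-24}{\dim V_1}.
\]
Since $\dim V_1>24$, the inequality $c_U\le 24$ forces $\dim\mathfrak{z}=0$; that is, $V_1$ is semisimple. With $\mathfrak{z}=0$ the same computation yields $c_U=24=c_V$, so the coset central charge $24-c_U$ vanishes and $\omega-\omega_U=0$, i.e.\ the conformal vectors of $V$ and of the subVOA generated by $V_1$ coincide.

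The main obstacle is the middle paragraph: establishing rigorously that $Z(\tau,z)$ is a weak Jacobi form for the full modular group (assembling Zhu's modular invariance, its Cartan-insertion version, and the triviality of the multiplier coming from holomorphicity) and then carrying out the Jacobi-development bookkeeping carefully enough to isolate the exact coefficient identity rather than a mere proportionality. Everything after that is linear algebra and central-charge counting.
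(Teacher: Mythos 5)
The paper quotes this proposition without proof, as a citation of \cite{DMb}, so your attempt can only be measured against Dong--Mason's argument there. Your overall strategy --- reductive decomposition, a modular-invariance identity with a Cartan insertion producing $\mathrm{tr}_{V_1}\,h_{(0)}^2=\frac{\dim V_1-24}{12}\langle h|h\rangle$, then central-charge bookkeeping --- is indeed their route. In \cite{DMb} the identity is extracted from Zhu's weight-two one-point function $Z_V(a[-1]b,\tau)$ rather than from a full Jacobi form, but your $z^2$-Taylor coefficient is the same object after the $E_2$-correction, and the bookkeeping does close: the corrected coefficient is a weight-$2$ form for $SL_2(\Z)$ holomorphic on the upper half-plane with at most a simple pole at $i\infty$, hence a multiple of $E_{14}/\Delta$, whose constant term happens to vanish, and comparing $q^0$-coefficients with $\sum_{\alpha\in\Phi}(\alpha|h)^2=2h^\vee(h|h)$ and $\langle h|h\rangle=k(h|h)$ gives exactly the stated identity. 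One technical point: the Jacobi-form theorem of Krauel--Mason \cite{KM} requires $h_{(0)}$ to have integral spectrum, so you should take $h$ in the coroot lattice and use that both sides of the target identity are quadratic in $h$.

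Two steps you dismiss as formal are where the genuine content lies. First, the claim that $C=\mathrm{Com}_V(U)$ is strongly regular of central charge $24-c_U$, ``so that $c_U\le 24$'', is unjustified: strong regularity (even rationality or $C_2$-cofiniteness) of commutants is not known in this generality --- the paper's own references \cite{CM,ACKL} treat special situations precisely because this is hard --- and since the invariant form on $V$ is not known to be positive-definite, nothing a priori forces the coset central charge to be nonnegative. Fortunately the detour is unnecessary: the coefficient identity is bilinear in the insertion, and applied to a central element $h\in\mathfrak{z}$, for which $h_{(0)}$ annihilates $V_1$, it yields $0=\frac{\dim V_1-24}{12}\langle h|h\rangle$; since $V_1$ is nonabelian it has a simple ideal, whose instance of the identity forces $\dim V_1>24$, so $\langle\cdot|\cdot\rangle$ vanishes identically on $\mathfrak{z}$ (polarize, and note invariance makes $\mathfrak{z}$ orthogonal to the semisimple part), contradicting the nondegeneracy of the form on $V_1$ (\cite{Li3}, $V$ being simple and self-dual) unless $\mathfrak{z}=0$. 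This is essentially how semisimplicity is obtained in \cite{DMb}. Second, the implication ``$c'=0$ hence $\omega-\omega_U=0$'' is not automatic: $\langle\omega'|\omega'\rangle=c'/2=0$ does not kill $\omega'$ when the form is indefinite. That implication is exactly the content of \cite[Proposition 4.1]{DMb}, proved there by comparing $Z_V=J+\dim V_1$ with the decomposition of $V$ over the affine subVOA generated by $V_1$ and effective-central-charge growth estimates from \cite{DM}. So: right strategy and correct numerology, but of your two ``bookkeeping'' implications, one is wrong as stated (though avoidable within your own framework) and the other is precisely the cited Proposition 4.1.
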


\begin{proposition}[{\cite[Corollary 1.4]{DM}}]\label{P:NVOA} Let $V$ be a strongly regular 
holomorphic VOA of central charge $24$.
If the Lie rank of $V_1$ is $24$, then $V$ is isomorphic to a Niemeier lattice VOA.
\end{proposition}

\subsection{$\Delta$-operator, simple affine VOAs and twisted modules}

In this subsection, we recall the construction of certain twisted modules given by Li \cite{Li}.
We also discuss the lowest $L(0)$-weight of such a twisted module over a simple affine VOA.

Let $V$ be a VOA of CFT-type.
Let $h\in V_1$ such that $L(1)h=0$ and $h_{(0)}$ acts semisimply on $V$.
Note that if $V$ is self-dual, then $L(1)V_1=0$.
Suppose that there exists a positive integer $n\in\Z_{>0}$ such that spectra of $h_{(0)}$ on $V$ 
belong to $({1}/{n})\Z$.
Then $\sigma_h=\exp(-2\pi\sqrt{-1}h_{(0)})$ is an automorphism of $V$ with $\sigma_h^n=1$.

Let $\Delta(h,z)$ be Li's $\Delta$-operator defined in \cite{Li}, i.e.,
\[
\Delta(h, z) = z^{h_{(0)}} \exp\left( \sum_{i=1}^\infty \frac{h_{(i)}}{-i} (-z)^{-i}\right).
\]

\begin{proposition}[{\cite[Proposition 5.4]{Li}}]\label{Prop:twist}
Let $\sigma$ be an automorphism of $V$ of finite order and
let $h\in V_1$ be as above such that $\sigma(h) = h$.
Let $(M, Y_M)$ be a $\sigma$-twisted $V$-module and
define $(M^{(h)}, Y_{M^{(h)}}(\cdot, z)) $ as follows:
\[
\begin{split}
& M^{(h)} =M \quad \text{ as a vector space;}\\
& Y_{M^{(h)}} (a, z) = Y_M(\Delta(h, z)a, z)\quad \text{ for any } a\in V.
\end{split}
\]
Then $(M^{(h)}, Y_{M^{(h)}}(\cdot, z))$ is a
$\sigma_h\sigma$-twisted $V$-module.
Furthermore, if $M$ is irreducible, then so is $M^{(h)}$.
\end{proposition}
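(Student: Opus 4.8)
The plan is to check directly that $(M^{(h)},Y_{M^{(h)}})$ satisfies the axioms of a $\sigma_h\sigma$-twisted $V$-module, noting that the underlying space and its grading are unchanged and only the vertex operator map is deformed by $\Delta(h,z)$. First I would dispose of the easy axioms. Since $h_{(0)}\1=0$ and $h_{(i)}\1=0$ for $i\geq 1$, we have $\Delta(h,z)\1=\1$, so $Y_{M^{(h)}}(\1,z)=Y_M(\1,z)=\mathrm{id}_M$. I would also record a finiteness fact used throughout: because each $h_{(i)}$ $(i\geq 1)$ lowers weight by $i$ and $V$ is of CFT-type, the exponential factor of $\Delta(h,z)$ terminates on any homogeneous vector, so $\Delta(h,z)a$ is a \emph{finite} Laurent series in $z$ for every fixed $a\in V$; this yields lower truncation for $Y_{M^{(h)}}$ and will be convenient for the irreducibility argument.

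Next I would determine the monodromy, which is what identifies the twist as $\sigma_h\sigma$. As $\sigma(h)=h$, the automorphism $\sigma$ commutes with every $h_{(i)}$ and hence with $\Delta(h,z)$, and since $h_{(0)}h=0$ the modes $h_{(i)}$ commute with $h_{(0)}$ as well. Decomposing $V$ into joint eigenspaces of the commuting semisimple operators $\sigma$ and $h_{(0)}$, take $a$ with $h_{(0)}a=m_a\,a$ and fixed $\sigma$-eigenvalue. Every term of $\Delta(h,z)a$ then lies in the same $\sigma$-eigenspace and retains the $h_{(0)}$-eigenvalue $m_a$, so the sole source of a non-integral power of $z$ is $z^{m_a}$. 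Consequently the powers of $z$ occurring in $Y_{M^{(h)}}(a,z)=Y_M(\Delta(h,z)a,z)$ have fractional part governed by $m_a$ and the $\sigma$-eigenvalue of $a$, which is exactly the prescription for a $\sigma_h\sigma$-twisted module, since $\sigma_h\sigma$ acts on $a$ by $e^{-2\pi\sqrt{-1}\,m_a}$ times the $\sigma$-eigenvalue. Since $\sigma$ has finite order and the spectrum of $h_{(0)}$ lies in $(1/n)\Z$, we have $\sigma_h^n=1$, so the commuting product $\sigma_h\sigma$ is of finite order.

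The main obstacle is the twisted Jacobi identity for $Y_{M^{(h)}}$. Its engine is an intertwining property of Li's operator: conjugating a vertex operator $Y_M(u,z_2)$ by $\Delta(h,z_1)$ again yields a vertex operator, namely $Y_M(\,\cdot\,,z_2)$ applied to $u$ deformed by an explicit $\Delta$-type operator in the two variables $z_1,z_2$. I would derive this from the commutator formula $[h_{(m)},Y_M(u,z_2)]=\sum_{i\geq 0}\binom{m}{i}Y_M(h_{(i)}u,z_2)\,z_2^{\,m-i}$: the prefactor $z_1^{h_{(0)}}$ contributes an argument shift via $[h_{(0)},Y_M(u,z_2)]=Y_M(h_{(0)}u,z_2)$, while the exponential factor, whose exponent involves only the weight-lowering modes $h_{(i)}$ $(i\geq 1)$, conjugates $Y_M(u,z_2)$ into $Y_M(w,z_2)$ for an explicit $w$ built from $u$ and the two variables; the two contributions assemble into the required deformation. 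Here the hypothesis $L(1)h=0$, equivalently $L(m)h=0$ for all $m\geq 1$, is what makes $\Delta(h,z)$ compatible with the Virasoro field, securing both the $L(-1)$-derivative property for $Y_{M^{(h)}}$ and the consistency of the formal expansions. Feeding this intertwining property into the $\sigma$-twisted Jacobi identity for $Y_M$ and simplifying the delta-function terms with $\Delta(h,z)\1=\1$ should yield the $\sigma_h\sigma$-twisted Jacobi identity; the delicate formal-variable bookkeeping in this substitution is where I expect the real work to lie.

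Finally, for the irreducibility assertion I would use that $\Delta(h,z)$ is invertible and that the construction is essentially involutive: applying the analogous deformation with $-h$ to $M^{(h)}$ returns $M$. Concretely, since $\Delta(h,z)a$ is a finite series for each $a\in V$, the modes of $Y_{M^{(h)}}(a,z)$ are finite linear combinations of modes of $Y_M(b,z)$ with $b\in V$, and conversely the modes of $Y_M$ are recovered from those of $Y_{M^{(h)}}$ by the inverse deformation. Hence a subspace of $M$ is stable under all modes of $Y_{M^{(h)}}$ if and only if it is stable under all modes of $Y_M$, so $M$ and $M^{(h)}$ have exactly the same submodules; irreducibility of $M$ therefore forces irreducibility of $M^{(h)}$.
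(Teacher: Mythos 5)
The first thing to note is that the paper contains no proof of this proposition for you to be compared against: it is stated with the citation \cite[Proposition 5.4]{Li} and used as a black box throughout, so the relevant benchmark is Li's original argument. Your outline is, in substance, a reconstruction of that argument, and its ingredients are correct: $\Delta(h,z)\1=\1$ gives the vacuum axiom; termination of the exponential on each homogeneous vector (using CFT-type) gives lower truncation and the finiteness you use later; $\sigma(h)=h$ gives $[\sigma,h_{(i)}]=0$, and $h_{(0)}h=0$ gives $[h_{(0)},h_{(i)}]=0$, so your joint-eigenspace bookkeeping correctly identifies the monodromy as that of $\sigma_h\sigma$; and irreducibility follows exactly as you say, since $\Delta(h,z)^{-1}=\Delta(-h,z)$ and finiteness make the modes of $Y_{M^{(h)}}$ finite linear combinations of modes of $Y_M$ and conversely, so the two module structures have the same invariant subspaces.

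The one place where your text is a plan rather than a proof is the twisted Jacobi identity, which is the entire substance of Li's proposition. The precise lemma you need, and which your commutator-formula discussion correctly points to without deriving, is the conjugation identity on $V$:
\[
\Delta(h,z_2)\,Y(a,z_0)\,\Delta(h,z_2)^{-1}\;=\;Y\bigl(\Delta(h,z_2+z_0)a,\,z_0\bigr),
\]
with $(z_2+z_0)^{h_{(0)}}$ expanded in nonnegative powers of $z_0$. Feeding $\Delta(h,z_1)a$ and $\Delta(h,z_2)b$ into the $\sigma$-twisted Jacobi identity for $Y_M$, the delta function supported on $z_1=z_2+z_0$ converts the iterate term into $Y_M(\Delta(h,z_2)Y(a,z_0)b,z_2)=Y_{M^{(h)}}(Y(a,z_0)b,z_2)$ by the displayed identity, while the fractional power $z_1^{h_{(0)}}$ recombines with the $\sigma$-monodromy factor to produce exactly the $\sigma_h\sigma$-monodromy factor; note also that your two contributions genuinely must be assembled, e.g.\ $z_2^{h_{(0)}}$ and the exponential part combine as $z_2^{m}(1+z_0/z_2)^{m}=(z_2+z_0)^{m}$ on an $h_{(0)}$-eigenvector of eigenvalue $m$. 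You explicitly defer this derivation and bookkeeping, so as it stands your proposal is a correct strategy with its key lemma unproved rather than a complete proof; but nothing in it would fail if carried out, and it is essentially the route of Li's original proof, which this paper simply cites.
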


Assume that $V$ is self-dual.
Then the $L(0)$-weights on $M^{(h)}$ are given by
\begin{equation}
L(0)+h_{(0)}+\frac{\langle h|h\rangle}{2}{\rm id}.\label{Eq:Lh}
\end{equation}
The following lemma is immediate from the equation above.

\begin{lemma}\label{Lem:wtmodule} Let $M$ be a $\sigma$-twisted $V$-module whose $L(0)$-weights 
belong to $\Z/3$. 
Let $h\in V_1$ such that $\sigma(h)=h$, $\langle h|h\rangle\in(2/3)\Z$, and $h_{(0)}$ is semisimple 
on $M$.
Assume that the spectra of $h_{(0)}$ on $M$ belong to $\Z/3$.
Then the $L(0)$-weights of the $\sigma_h\sigma$-twisted $V$-module $M^{(h)}$ also belong to $\Z/3$.
\end{lemma}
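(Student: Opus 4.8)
The plan is to read off the conclusion directly from the weight formula \eqref{Eq:Lh}, after simultaneously decomposing $M$ under the commuting pair $L(0)$ and $h_{(0)}$. The lemma is essentially a bookkeeping statement about which cosets of $\Z/3$ the various contributions land in, so the work is in setting up the right decomposition rather than in any estimate.

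First I would observe that $L(0)$ and $h_{(0)}$ commute as operators on $M$. Indeed, since $h\in V_1$ has weight one, the commutator formula for modes gives $[L(0),h_{(0)}]=(\wt(h)-1-0)h_{(0)}=0$. Because $h_{(0)}$ is assumed to act semisimply on $M$ and commutes with $L(0)$, the module $M$ decomposes as a direct sum of common eigenspaces for $L(0)$ and $h_{(0)}$. It therefore suffices to compute the $L^{(h)}(0)$-eigenvalue on each such common eigenvector, since $M^{(h)}=M$ as a vector space and its conformal weight operator is exactly $L^{(h)}(0)$.

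Next, let $v\in M$ be a nonzero common eigenvector with $L(0)v=\lambda v$ and $h_{(0)}v=\mu v$. By hypothesis the $L(0)$-weights of $M$ lie in $\Z/3$, so $\lambda\in\Z/3$; and the spectra of $h_{(0)}$ on $M$ lie in $\Z/3$, so $\mu\in\Z/3$. Applying \eqref{Eq:Lh}, which is available since $V$ is self-dual, yields
\[
L^{(h)}(0)v=\left(\lambda+\mu+\frac{\langle h|h\rangle}{2}\right)v.
\]
Finally, the hypothesis $\langle h|h\rangle\in(2/3)\Z$ gives $\langle h|h\rangle/2\in\Z/3$, so the eigenvalue $\lambda+\mu+\langle h|h\rangle/2$ lies in $\Z/3$. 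Ranging over all common eigenvectors shows that every $L(0)$-weight of $M^{(h)}$ belongs to $\Z/3$, as required.

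Since each ingredient is either a hypothesis of the lemma or the already-established identity \eqref{Eq:Lh}, there is no genuine obstacle here. The only point requiring a moment's care is justifying the simultaneous eigenspace decomposition, namely that $h_{(0)}$ and $L(0)$ commute and that $h_{(0)}$ is semisimple on $M$; both are guaranteed by the weight-one commutator relation and the standing assumption, respectively. The lemma is thus, as the text anticipates, immediate from \eqref{Eq:Lh}.
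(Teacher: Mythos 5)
Your proof is correct and follows exactly the route the paper intends: the paper declares the lemma ``immediate from the equation above,'' namely \eqref{Eq:Lh}, and your argument is just a careful spelling-out of that, with the simultaneous eigenspace decomposition and the commutation $[L(0),h_{(0)}]=0$ made explicit. Nothing is missing and nothing differs in substance from the paper's (one-line) justification.
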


For a simple Lie algebra $\mathfrak{a}$ and a positive integer $k$, let $L_{\mathfrak{a}}(k,0)$ be 
the simple affine VOA associated with  $\mathfrak{a}$ at level $k$.
The following lemma was proved in \cite[Lemma 3.6]{LS3} for a simple Lie algebra.
One can easily generalize it to a semisimple Lie algebra.

\begin{lemma}\label{Lem:lowestwt}{\rm (cf.\ \cite[Lemma 3.6]{LS3})} Let $\mathfrak{g}$ be a 
semisimple  Lie algebra and let $\mathfrak{g}=\bigoplus_{i=1}^t\mathfrak{g}_i$ be the decomposition 
of $\mathfrak{g}$ into the direct sum of simple ideals $\mathfrak{g}_i$.
Let $k_i$ be a positive integer and let $\lambda_i$ be a dominant integral weight of 
$\mathfrak{g}_i$ (with respect to a fixed Cartan subalgebra).
Let $L_{\Fg_i}(k_i,\lambda_i)$ be the irreducible $L_{\Fg_i}(k_i,0)$-module with highest weight 
$\lambda_i$.
Let $h$ be an element in the Cartan subalgebra such that $$(h|\alpha)\ge-1$$ for any root $\alpha$ 
of $\mathfrak{g}$.
Then the lowest $L(0)$-weight of $\left(\bigotimes_{i=1}^t L_{\Fg_i}(k_i,\lambda_i)\right)^{(h)}$ is 
equal to \begin{equation}
\ell+\sum_{i=1}^t\min\{(h_i|\mu)\mid \mu\in\Pi(\lambda_i)\}+\frac{\langle 
h|h\rangle}{2},\label{Eq:twisttop}
\end{equation}
where $\ell$ is the lowest $L(0)$-weight of $\bigotimes_{i=1}^tL_{\mathfrak{g}_i}(k_i,\lambda_i)$ , $h=\sum_{i=1}^th_i$ $(h_i\in\g_i)$, 
and $\Pi(\lambda_i)$ is the set of all weights of the irreducible $\mathfrak{g}_i$-module with the 
highest weight $\lambda_i$.
\end{lemma}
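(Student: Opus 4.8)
The plan is to prove Lemma~\ref{Lem:lowestwt} by combining the explicit $L(0)$-weight formula \eqref{Eq:Lh} with the observation that the $\Delta$-operator twist interacts with tensor products in a simple additive way. First I would reduce to the single simple ideal case. Since $\Fg=\bigoplus_{i=1}^t\Fg_i$ and $h$ decomposes as $h=\sum_{i=1}^t h_i$ with $h_i$ in the Cartan subalgebra of $\Fg_i$, the vertex operator algebra $\bigotimes_{i=1}^t L_{\Fg_i}(k_i,0)$ has conformal vector $\omega=\sum_i\omega_i$, and Li's $\Delta$-operator factors as $\Delta(h,z)=\prod_i\Delta(h_i,z)$ because the modes $(h_i)_{(j)}$ act only on the $i$-th tensor factor and hence commute across factors. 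Consequently $\bigl(\bigotimes_i L_{\Fg_i}(k_i,\lambda_i)\bigr)^{(h)}\cong\bigotimes_i\bigl(L_{\Fg_i}(k_i,\lambda_i)\bigr)^{(h_i)}$ as twisted modules, and the lowest $L(0)$-weight of a tensor product is the sum of the lowest $L(0)$-weights of the factors. This cleanly splits off the claimed sum $\sum_i$ and the cross term $\langle h|h\rangle/2=\sum_i\langle h_i|h_i\rangle/2$, using that $\langle\cdot|\cdot\rangle$ is orthogonal across distinct simple ideals.

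Next I would handle the single simple ideal case, which is essentially \cite[Lemma 3.6]{LS3} as the statement acknowledges. The point is to read off the lowest $L(0)$-weight from \eqref{Eq:Lh}, which gives $L^{(h)}(0)=L(0)+h_{(0)}+\tfrac{\langle h|h\rangle}{2}\,\mathrm{id}$. On the irreducible module $L_{\Fg_i}(k_i,\lambda_i)$, the operator $h_{(0)}$ acts on a weight vector of weight $\mu$ by the scalar $(h_i|\mu)$, and $L(0)$ already acts by a constant $\ell_i$ (the lowest conformal weight, equal on the whole top level) plus nonnegative integers below. The minimum of $(h_i|\mu)$ over the top-level weights $\mu\in\Pi(\lambda_i)$ is what contributes, provided I verify that this minimum is genuinely achieved on the lowest-weight space of the twisted module and is not beaten by a vector at a strictly higher $L(0)$-level. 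This is where the hypothesis $(h|\alpha)\ge -1$ for every root $\alpha$ enters: it controls how much $h_{(0)}$ can lower the weight relative to how much $L(0)$ must raise it as one descends in the affine module, guaranteeing that the overall minimum of $L^{(h)}(0)$ is attained at conformal depth zero. I would make this precise by noting that lowering a weight by $\alpha$ costs at least the affine grading increment, while the $h_{(0)}$-eigenvalue changes by $-(h|\alpha)\le 1$, so the net effect on $L^{(h)}(0)$ is nonnegative.

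The main obstacle I expect is precisely this last verification: showing that the minimizing vector for $L^{(h)}(0)$ lies in the top homogeneous component $L_{\Fg_i}(k_i,\lambda_i)_{\ell_i}$ rather than deeper in the module. A vector at affine depth $m$ (so $L(0)$-weight $\ell_i+m$ with $m\in\Z_{>0}$) carries a weight of the form $\mu-\sum n_j\alpha_j$, and its $h_{(0)}$-eigenvalue is $(h_i|\mu)-\sum n_j (h_i|\alpha_j)$. The bound $(h_i|\alpha_j)\ge -1$ together with a comparison between the number of lowering generators applied and the resulting affine grading shift is what forces $m+\bigl((h_i|\mu-\sum n_j\alpha_j)-\min_\mu(h_i|\mu)\bigr)\ge 0$; since this is the content of \cite[Lemma 3.6]{LS3}, I would either cite that argument directly or reproduce its short estimate. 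Everything else---the factorization of $\Delta$, additivity of lowest weights over tensor factors, and orthogonality of $\langle\cdot|\cdot\rangle$ across ideals---is routine, so I would state those steps briefly and concentrate the verification on the single-ideal minimum.
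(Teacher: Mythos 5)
Your proposal is correct and follows essentially the same route as the paper: the paper gives no argument beyond citing \cite[Lemma 3.6]{LS3} for the simple case and asserting that the semisimple generalization is easy, and your tensor-factorization of Li's $\Delta$-operator (together with additivity of lowest $L(0)$-weights across factors and orthogonality of $\langle\cdot|\cdot\rangle$ on distinct ideals) is precisely that easy generalization made explicit. Your deferral of the single-ideal minimum estimate to \cite{LS3} matches the paper exactly; just note that the clean bookkeeping there is per creation operator $x_{\beta}\otimes t^{-n}$ (any root $\beta$, $n\ge 1$) acting on the top level, where the change in $L^{(h)}(0)$ is $n+(h|\beta)\ge 0$, rather than per application of simple-root lowering operators, whose finite ones shift the depth by $0$.
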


\section{Conjugacy classes of the automorphism group of a lattice VOA}
In this section, we discuss the conjugacy classes of certain order $3$ automorphisms of Niemeier lattice VOAs.

\subsection{Conjugation under the inner automorphism group of a simple Lie algebra}

By the similar arguments as in \cite[Theorem 8.6]{Kac} (cf.\ \cite[Exercise 8.10]{Kac}), one can  
obtain the following lemmas:

\begin{lemma}\label{Lem:conE6} Let $\mathfrak{g}$ be a simple Lie algebra of type $E_6$. Then two 
order $3$ automorphisms of $\mathfrak{g}$ with the fixed point Lie algebra of type $A_2^3$ are  
conjugate under inner automorphisms of $\g$
\end{lemma}

\begin{lemma}\label{Lem:conD4} Let $\mathfrak{g}$ be a simple Lie algebra of type $D_4$.
\begin{enumerate}[{\rm (1)}]
\item Let $g$ and $g'$ be order $3$ automorphisms of $\mathfrak{g}$ with the fixed point Lie algebra of type $A_2$.
If $g^{-1}g'$ is inner, then $g$ and $g'$ are conjugate under inner automorphisms of $\g$.
\item Two order $3$ automorphisms of $\mathfrak{g}$ with the fixed point Lie algebra of type $A_1^3U(1)$ are conjugate under inner automorphisms of $\g$.
\end{enumerate}
\end{lemma}

The following lemma will be used later.

\begin{lemma}\label{L:perm3} Let $\g_1,\g_2,\g_3$ be isomorphic reductive Lie algebras and set $\g=\g_1\oplus\g_2\oplus\g_3$.
Let $g$ be an automorphism of $\g$ such that $g(\mathfrak{g}_1)=\mathfrak{g}_{2}$, $g(\mathfrak{g}_2)=\mathfrak{g}_{3}$ and $g(\mathfrak{g}_3)=\mathfrak{g}_{1}$.
Let $f$ be an inner automorphism of $\mathfrak{g}$.
If the order of $fg$ is $3$, then there exists an inner automorphism $x$ of $\g$ such that $xgx^{-1}=fg$.
\end{lemma}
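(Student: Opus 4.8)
The plan is to reduce the identity $xgx^{-1}=fg$ to a system of three equations among inner automorphisms of the individual ideals and then to solve it explicitly. Since $\g_1,\g_2,\g_3$ are ideals of $\g$, every inner automorphism of $\g$ preserves each of them; hence $f=f_1\oplus f_2\oplus f_3$ with $f_i\in\Inn(\g_i)$, and we search for an inner $x=x_1\oplus x_2\oplus x_3$ with $x_i\in\Inn(\g_i)$. Writing $a=g|_{\g_1}\colon\g_1\to\g_2$, $b=g|_{\g_2}\colon\g_2\to\g_3$ and $c=g|_{\g_3}\colon\g_3\to\g_1$ for the isomorphisms induced by $g$, a direct computation shows that $xgx^{-1}=fg$ holds if and only if
\[
x_2\,a\,x_1^{-1}=f_2\,a,\qquad x_3\,b\,x_2^{-1}=f_3\,b,\qquad x_1\,c\,x_3^{-1}=f_1\,c .
\]

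First I would solve this recursion. The first two equations give $x_2=f_2\,(a\,x_1\,a^{-1})$ and $x_3=f_3\,(b\,x_2\,b^{-1})$ in terms of $x_1$; being products of inner automorphisms with conjugates (by the isomorphisms $a,b$) of inner automorphisms, both $x_2$ and $x_3$ are automatically inner. Substituting these into the third equation, the entire system collapses to the single conjugacy condition
\[
x_1\,(g^3|_{\g_1})\,x_1^{-1}=(fg)^3|_{\g_1}
\]
in $\Aut(\g_1)$, since $f_1 c f_3 b f_2 a=(fg)^3|_{\g_1}$ and $cba=g^3|_{\g_1}$.

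Next I would invoke the order hypotheses. Both $g$ and $fg$ induce the same $3$-cycle on $\{\g_1,\g_2,\g_3\}$, so $g^3$ and $(fg)^3$ each preserve every ideal. Since $fg$ has order $3$ we have $(fg)^3|_{\g_1}=\id$; and using that $g$ has order $3$ (which is necessary for the conclusion, as conjugate automorphisms share the same order, and which holds in every case where we apply the lemma) we have $g^3|_{\g_1}=\id$. The displayed condition is then satisfied by every $x_1$, in particular by $x_1=\id_{\g_1}$. With this choice $x_2=f_2$ and $x_3=f_3\,(b\,f_2\,b^{-1})$, so $x=\id_{\g_1}\oplus f_2\oplus f_3(b f_2 b^{-1})$ is an inner automorphism of $\g$ with $xgx^{-1}=fg$, as required.

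The main obstacle is the ``closing up'' carried out in the second paragraph: because $g$ cyclically permutes the three factors, the three component equations are genuinely coupled, and eliminating $x_2,x_3$ amounts to transporting the relation once around the $3$-cycle. The resulting monodromy is exactly the comparison of the two return maps $g^3|_{\g_1}$ and $(fg)^3|_{\g_1}$, and it is precisely the order-$3$ hypotheses that make this monodromy trivial. The remaining work—checking that the $x_i$ are inner and keeping track of which factor each of $a,b,c,f_i,x_i$ acts on—is routine but must be bookkept carefully.
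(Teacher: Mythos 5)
Your proof is correct and is essentially the paper's own argument: both decompose $f$ into its components $f_i$ on the three ideals, use $(fg)^3=1$ together with $g^3=1$ to obtain the needed relations, and then exhibit an explicit inner $x$ — indeed the paper's choice $x=f_1f_2f_1^{g^2}$ is exactly your general solution with $x_1=f_1$ instead of $x_1=\id$ (your reduction shows any inner $x_1$ works once the two ``monodromies'' $g^3|_{\g_1}$ and $(fg)^3|_{\g_1}$ are trivial). The appeal to $g^3=1$, which you flag explicitly, is shared with the paper: its proof also invokes $g^3=1$ even though this hypothesis is absent from the statement of the lemma (it is necessary, since conjugate automorphisms have equal order, and it does hold in every application, where $g$ is a standard lift of an order-$3$ isometry), so your treatment of this point is, if anything, more careful than the paper's.
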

\begin{proof} For $1\le i\le 3$, let $f_i$ be the inner automorphism of $\mathfrak{g}_i$ such that $f=f_1f_2f_3$.
It follows from $(fg)^3=1$ and $g^3=1$ that  
$f_1f_3^{g^2}f_2^g=f_2f_1^{g^2}f_3^g=f_3f_2^{g^2}f_1^g=1,$   
where $a^b=b^{-1}ab$.
Set $x=f_1f_2f_1^{g^2}$.
Then one can directly check that $xgx^{-1}=fg$.
\end{proof}

\subsection{Conjugation under the inner automorphism group of a VOA}
In this subsection, we prove some general results on conjugation under the inner automorphism group of a VOA.

\begin{lemma}\label{Lem:con4} Let $V$ be a VOA of CFT-type.
Assume that $V_1$ is reductive and let $\mathfrak{h}$ be a Cartan subalgebra of $V_1$.
Let $g$ be an order $3$ automorphism of $V$ such that
$g(\mathfrak{h})=\mathfrak{h}$.
Let $u\in\mathfrak{h}$ such that $\sum_{i=0}^{2} g^i(u)=0$.
Then $\sigma_ug$ is conjugate to $g$ under the inner automorphism group.
\end{lemma}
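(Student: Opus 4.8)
The plan is to produce the conjugating element explicitly in the form $x=\sigma_v$ for a suitable $v\in\mathfrak{h}$, so that the whole problem reduces to a single linear equation inside $\mathfrak{h}$. Before computing, I would record two elementary facts about the inner automorphisms attached to $\mathfrak{h}$. First, since $g$ is an automorphism of $V$ we have $g\,v_{(0)}\,g^{-1}=(gv)_{(0)}$ for every $v\in V_1$, and hence $g\sigma_v g^{-1}=\sigma_{gv}$; in particular $g\sigma_v^{-1}g^{-1}=\sigma_{-gv}$. Second, because $\mathfrak{h}$ is abelian, for $a,b\in\mathfrak{h}$ one has $[a_{(0)},b_{(0)}]=(a_{(0)}b)_{(0)}=[a,b]_{(0)}=0$, so the operators $a_{(0)}$ and $b_{(0)}$ commute and therefore $\sigma_a\sigma_b=\sigma_{a+b}$. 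Both facts rely only on $g(\mathfrak{h})=\mathfrak{h}$ and on $\mathfrak{h}$ being a Cartan (hence abelian) subalgebra, and each $\sigma_v$ is an automorphism by the general fact that $\exp(a_{(0)})\in\Aut(V)$ for $a\in V_1$.

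Using these, I would compute the conjugate directly:
\[
\sigma_v\, g\, \sigma_v^{-1}=\sigma_v\,\bigl(g\,\sigma_v^{-1}g^{-1}\bigr)\,g=\sigma_v\,\sigma_{-g(v)}\,g=\sigma_{v-g(v)}\,g .
\]
Thus $\sigma_v g\sigma_v^{-1}=\sigma_u g$ will hold as soon as $\sigma_{v-g(v)}=\sigma_u$, and for this it clearly suffices to solve the linear equation
\[
v-g(v)=u,\qquad v\in\mathfrak{h},
\]
i.e.\ to exhibit $u$ as an element of the image of the operator $1-g$ acting on $\mathfrak{h}$. So everything comes down to showing that the hypothesis $\sum_{i=0}^{2}g^i(u)=0$ is exactly the condition $u\in\mathrm{Im}(1-g)$.

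For the final step I would use the eigenspace decomposition of $g|_{\mathfrak{h}}$. Since $g^3=1$ on $V$, the restriction $g|_{\mathfrak{h}}$ is semisimple with eigenvalues among $1,\zeta,\zeta^2$, where $\zeta=\exp(2\pi\sqrt{-1}/3)$, giving $\mathfrak{h}=\mathfrak{h}_1\oplus\mathfrak{h}_\zeta\oplus\mathfrak{h}_{\zeta^2}$. On this decomposition the operator $1+g+g^2$ acts as $3$ on $\mathfrak{h}_1$ and as $0$ on $\mathfrak{h}_\zeta\oplus\mathfrak{h}_{\zeta^2}$, while $1-g$ acts as $0$ on $\mathfrak{h}_1$ and invertibly on $\mathfrak{h}_\zeta\oplus\mathfrak{h}_{\zeta^2}$; hence $\mathrm{Ker}(1+g+g^2)=\mathfrak{h}_\zeta\oplus\mathfrak{h}_{\zeta^2}=\mathrm{Im}(1-g)$. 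The assumption $\sum_{i=0}^{2}g^i(u)=(1+g+g^2)u=0$ therefore places $u$ in $\mathrm{Im}(1-g)$, so a solution $v\in\mathfrak{h}$ of $v-g(v)=u$ exists, and $x=\sigma_v$ satisfies $xgx^{-1}=\sigma_u g$. There is no genuinely hard step here; the only point requiring care is the identification of the hypothesis with membership in $\mathrm{Im}(1-g)$ via this eigenspace analysis, together with the clean additivity $\sigma_a\sigma_b=\sigma_{a+b}$, which is what lets the conjugation collapse to a linear equation in the abelian algebra $\mathfrak{h}$.
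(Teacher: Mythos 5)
Your proposal is correct and follows essentially the same route as the paper: conjugate by an inner automorphism $\sigma_v$ with $v\in\mathfrak{h}$, use $g\sigma_v g^{-1}=\sigma_{g(v)}$ together with the additivity $\sigma_a\sigma_b=\sigma_{a+b}$ on the abelian algebra $\mathfrak{h}$, and thereby reduce the conjugacy statement to the linear equation $v-g(v)=u$ in $\mathfrak{h}$. The only difference is presentational: the paper simply exhibits the explicit conjugator $\sigma_{\frac{1}{3}(g^2(u)-u)}$ (which is exactly the solution of that linear equation, up to sign/direction of the conjugation) and verifies the one-line computation, whereas you establish the existence of a solution abstractly via the eigenspace identity $\mathrm{Ker}(1+g+g^2)=\mathrm{Im}(1-g)$ on $\mathfrak{h}$.
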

\begin{proof} Since $g\sigma_u=\sigma_{g(u)}g$ and $\sigma_u^{-1}=\sigma_{-u}$, we have  
\begin{align*}
&\sigma_{\frac13(g^2(u)-u)}\sigma_ug(\sigma_{\frac13(g^2(u)-u)})^{-1}
=\sigma_{\frac13(g^2(u)-u)+u+\frac13(-u+g(u))}g=\sigma_{\frac13(u+g(u)+g^2(u))}g=g
\end{align*}
as desired.
\end{proof}

\begin{proposition}\label{Prop:con3} Let $V$ be a self-dual VOA of CFT-type and $g$ an order $3$ automorphism of $V$.
Assume that the Lie algebra $V_1$ is reductive and let $V_1=\bigoplus_{i=1}^s\mathfrak{g}_i\oplus\mathfrak{a}$ be the decomposition into the direct sum of simple ideals $\mathfrak{g}_i$ with level $k_i$ and the radical $\mathfrak{a}$.
Then $g(\mathfrak{a})=\mathfrak{a}$ and $g$ acts on $\{\mathfrak{g}_i\mid 1\le i\le s\}$ as a permutation.
Without loss of generality, we may assume that there exists a non-negative integer $t$ such that $3t\le s$  and 
\[
g(\mathfrak{g}_i)=\begin{cases}\mathfrak{g}_{i+t}&{\rm if}\quad 1\le i\le 2t,\\ \mathfrak{g}_{i-2t}&{\rm if}\quad 2t+1\le i\le 3t,\\ \mathfrak{g}_i &{\rm if}\quad 3t+1\le i\le s.\end{cases}
\]
Then $$V^g_1=\bigoplus_{i=1}^t(\mathfrak{g}_i\oplus\mathfrak{g}_{i+t}\oplus\mathfrak{g}_{i+2t})^g\oplus\bigoplus_{i=3t+1}^s\mathfrak{g}_i^g\oplus\mathfrak{a}^g$$ is a direct sum of ideals.
Moreover
\begin{enumerate}[{\rm (1)}]
\item For $1\le i\le t$, we must have an equality of the levels $k_i=k_{i+t}=k_{i+2t}$. In addition, the Lie subalgebra $(\mathfrak{g}_i\oplus\mathfrak{g}_{i+t}\oplus\mathfrak{g}_{i+2t})^g$ is a simple ideal of $V^g_1$ isomorphic to $\mathfrak{g}_i$ and its level is $3k_i$.
\item Let $3t+1\le i\le s$.
Assume that the type of $\mathfrak{g}_i$ is $A_n$, $D_n$ or $E_n$.
If the restriction $g_{|\mathfrak{g}_i}$ of $g$ to $\mathfrak{g}_i$ is inner, then the Lie ranks of $\mathfrak{g}_i$ and $\mathfrak{g}_i^g$ are the same, and the level of any simple ideal of $\mathfrak{g}_i^g$ is $k_i$.
\item Let $3t+1\le i\le s$.
If the restriction $g_{|\mathfrak{g}_i}$ of $g$ to $\mathfrak{g}_i$ is not inner, then $\mathfrak{g}_i$ has the type $D_{4,k_i}$, and  $\mathfrak{g}_i^g$ has the type $A_{2,3k_i}$ or $G_{2,k_i}$.
\end{enumerate}
\end{proposition}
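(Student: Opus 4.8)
The plan is to determine the permutation action of $g$ on the simple ideals and then treat the three types of orbit-blocks separately, computing each level by comparing the invariant form $\langle\cdot|\cdot\rangle$ with the normalized Killing form through Proposition \ref{Prop:level}, always evaluating on \emph{coroots}. First I would note that $g$ preserves $\langle\cdot|\cdot\rangle$, so it is an automorphism of the Lie algebra $V_1$ that is simultaneously an isometry. The radical $\mathfrak{a}$ is the center of the reductive algebra $V_1$ and is characteristic, whence $g(\mathfrak{a})=\mathfrak{a}$; the $\mathfrak{g}_i$ are the minimal ideals of $[V_1,V_1]$, so $g$ permutes them, and $g^3=1$ forces this permutation to be a product of $3$-cycles and fixed points, giving the relabeling in the statement. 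Since distinct ideals (and $\mathfrak{a}$) are mutually orthogonal for $\langle\cdot|\cdot\rangle$ and $g$ preserves the block decomposition, $V_1^g$ is the direct sum over the orbits of their fixed subalgebras, each an ideal of $V_1^g$.

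For a $3$-cycle block (Claim (1)) the maps $g\colon \mathfrak{g}_i\to\mathfrak{g}_{i+t}\to\mathfrak{g}_{i+2t}$ are Lie isomorphisms and isometries, so they carry long-root coroots to long-root coroots; with Proposition \ref{Prop:level} this gives $k_i=k_{i+t}=k_{i+2t}$. The fixed subalgebra is the diagonal $\{(a,g(a),g^2(a))\mid a\in\mathfrak{g}_i\}\cong\mathfrak{g}_i$, and for a long root $\alpha$ of $\mathfrak{g}_i$ the associated diagonal coroot is $h_\alpha+g(h_\alpha)+g^2(h_\alpha)$; its three summands lie in different ideals, so orthogonality together with the isometry property give it $\langle\cdot|\cdot\rangle$-norm $3\cdot 2k_i$, and Proposition \ref{Prop:level} for the diagonal then yields level $3k_i$. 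For a fixed ideal with $g_{|\mathfrak{g}_i}$ inner (Claim (2)), Proposition \ref{Prop:con0} with $\mu=1$ lets me take $g_{|\mathfrak{g}_i}=\exp(\ad\frac{2\pi\sqrt{-1}}{3}h)$, which fixes a Cartan subalgebra pointwise; hence the rank is preserved and $\mathfrak{g}_i^g$ is generated by that Cartan and the $e_\alpha$ with $(h|\alpha)\in\Z$, so its root system is a sub-root-system of that of $\mathfrak{g}_i$. As $\mathfrak{g}_i$ is simply laced, every such root has norm $2$, so the long-root coroots of the simple ideals of $\mathfrak{g}_i^g$ are long-root coroots of $\mathfrak{g}_i$, and Proposition \ref{Prop:level} gives level $k_i$.

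For a fixed ideal with $g_{|\mathfrak{g}_i}$ not inner (Claim (3)), Proposition \ref{Prop:con0} writes the restriction as $\exp(\ad\frac{2\pi\sqrt{-1}}{3}h)\mu$ with $\mu$ a diagram automorphism of order $3$; among simple types only $D_4$ admits such a $\mu$ (triality), so $\mathfrak{g}_i$ has type $D_{4,k_i}$, and by the analysis in Lemma \ref{Lem:conD4} the algebra $\mathfrak{g}_i^g$ is of type $A_2$ or $G_2$ (the $A_1^3U(1)$ case arose only for inner $g$). The main obstacle is the level computation here, and it is exactly where the coroot/root distinction matters. For $G_2$ the long roots are the $\mu$-fixed roots of $D_4$, of norm $2$, giving level $k_i$. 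For $A_2$ it is tempting to use the norm of the $A_2$ root viewed as a weight of the fixed Cartan $\mathfrak{h}^\mu$, but that projected weight has norm $2/3$ and would give the wrong answer; the correct invariant is the coroot, which for a long root of this $A_2$ equals $h_\alpha+h_{\mu\alpha}+h_{\mu^2\alpha}$ for a size-$3$ orbit of mutually orthogonal $D_4$-roots and therefore has norm $6$. Proposition \ref{Prop:level} then gives level $3k_i$. Verifying that the surviving orbit really consists of orthogonal roots, so that this coroot is a clean orthogonal sum, is the delicate step I would check most carefully.
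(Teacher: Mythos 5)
Your proof is correct and takes essentially the same route as the paper's: uniqueness of the ideal decomposition for the permutation claims; the twisted-diagonal isomorphism together with orthogonality of distinct ideals for (1); Proposition \ref{Prop:con0} with $\mu=1$, a shared Cartan subalgebra, and the simply-laced hypothesis for (2); and triality-forces-$D_4$ plus Lemma \ref{Lem:conD4} for (3), where your insistence on coroots rather than projected weights is exactly the paper's convention of identifying a long root of $\mathfrak{g}_i^g$ with the Cartan element $\alpha_1+\alpha_3+\alpha_4$. The one step you flag as delicate --- that the surviving triality orbit is pairwise orthogonal --- is precisely what the paper's reduction via Lemma \ref{Lem:conD4}(1) settles: after conjugating $g_{|\mathfrak{g}_i}$ into the standard form $\exp\left(\ad\frac{2\pi\sqrt{-1}}{3}\Lambda_2\right)\tau$, the relevant orbit is $\{\alpha_1,\alpha_3,\alpha_4\}$, visibly pairwise orthogonal from the Dynkin diagram, so the coroot $\alpha_1+\alpha_3+\alpha_4$ has $\langle\cdot|\cdot\rangle$-norm $6k_i$ and Proposition \ref{Prop:level} gives level $3k_i$.
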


\begin{proof} 
Since the decomposition of $\mathfrak{g}$ into a direct sum of simple ideals and  the radical is 
unique, $g$ preserves the radical and permutes the simple ideals.

(1) Clearly,  the levels of $\mathfrak{g}_i$, $g(\mathfrak{g}_i)$ and $g^2(\mathfrak{g}_i)$ 
are the same. For $1\le i\le t$, set  
$\mathfrak{b}=\mathfrak{g}_i\oplus\mathfrak{g}_{i+k}\oplus\mathfrak{g}_{i+2k}$. Then the 
map $\nu:\mathfrak{g}_i\to\mathfrak{b}^g$, $x\mapsto x+g(x)+g^2(x)$ is an isomorphism of Lie 
algebras,  and $ \mathfrak{b}^g( \cong \mathfrak{g}_i)$ is a simple ideal of $\mathfrak{g}^g$. The 
level of $\mathfrak{b}^g$ can be computed easily by a direct calculation.

(2) Since $g_{|\mathfrak{g}_i}$ is inner, 
there is a common Cartan subalgebra for $\mathfrak{g}_i$ and $\mathfrak{g}_i^g$, and 
they have the same Lie ranks. 
Let $\mathfrak{c}$ be a simple ideal of $\mathfrak{g}_i^g$.
Since $\mathfrak{g}_i$ and $\mathfrak{g}_i^g$ share a Cartan subalgebra and the type of $\mathfrak{g}_i$ is $A_n$, $D_n$ or $E_n$, any (long) root of $\mathfrak{c}$ is also a (long) root of $\mathfrak{g}_i$.
By Proposition \ref{Prop:level}, the levels of $\mathfrak{c}$ and $\mathfrak{g}_i$ are the same.

(3) By the assumption, $\mathfrak{g}_i$ has a diagram automorphism of order $3$, and  hence the type 
of $\mathfrak{g}_i$ must be $D_4$. Then the Lie algebra of $\mathfrak{g}_i^g$ is either $A_2$ or 
$G_2$. Again, the level can be computed easily by a direct calculation.  
\end{proof}

\subsection{Certain order $3$ automorphisms of Niemeier lattice VOAs}\label{Sec:AutVL}

In this subsection, we characterize certain conjugacy classes of the lattice VOAs associated with the Niemeier lattices $Ni(E_6^4)$ and $Ni(D_4^6)$.

Let $L$ be an even lattice with positive-definite symmetric bilinear form $(\cdot|\cdot)$.
Let $L^*=\{v\in \Q\otimes_\Z L\mid (v|L)\subset\Z\}$ be the dual lattice of $L$. 
Let $Q$ be the root 
sublattice of $L$, the sublattice generated by norm $2$ vectors of $L$.
For the explicit construction of lattice VOAs, see \cite{FLM}.

\begin{lemma}\label{Lem:con1} 
Assume that the ranks of $L$ and $Q$ are the same.
Then $$\{g\in\Aut (V_L)\mid g=1\ {\rm on}\ (V_L)_1\}=\{\sigma_u\mid u\in Q^*/L^*\}.$$
\end{lemma}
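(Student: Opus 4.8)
The plan is to prove the two inclusions separately, the inclusion $\supseteq$ being immediate and $\subseteq$ carrying the real content. Recall that $(V_L)_1=\mathfrak{h}\oplus\bigoplus_{(\alpha|\alpha)=2}\C e^\alpha$, where $\mathfrak{h}=\C\otimes_\Z L$ is the Heisenberg Cartan subalgebra and the $e^\alpha$ are the root vectors. For $u\in Q^*$ the inner automorphism $\sigma_u$ acts trivially on $\mathfrak{h}$ and by $\exp(-2\pi\sqrt{-1}(u|\alpha))$ on $e^\alpha$; since every root $\alpha$ lies in $Q$ we have $(u|\alpha)\in\Z$, so $\sigma_u$ fixes $(V_L)_1$ pointwise. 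As $\sigma_u=\sigma_{u'}$ exactly when $u-u'\in L^*$, these automorphisms are parametrized by $Q^*/L^*$, which gives $\supseteq$.

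For the reverse inclusion, let $g\in\Aut(V_L)$ fix $(V_L)_1$ pointwise. First I would observe that the subVOA generated by $(V_L)_1$ is $V_Q$: the $e^\alpha$ together with $\mathfrak{h}$ generate, via repeated $n$-th products, all of $M(1)\otimes\C[Q]$ because the norm $2$ vectors $\Z$-span $Q$. Since $g$ fixes these generators and is a VOA automorphism, $g$ fixes $V_Q$ pointwise. The hypothesis $\mathrm{rank}\,Q=\mathrm{rank}\,L$ forces $L/Q$ to be finite, and $Q\subseteq L\subseteq L^*\subseteq Q^*$ shows each coset of $Q$ in $L$ lies in $Q^*/Q$; hence $V_L=\bigoplus_{\gamma\in L/Q}V^{\gamma}$ is a finite direct sum of the irreducible $V_Q$-modules $V^{\gamma}=V_{Q+\beta}$ attached to the cosets, and these are pairwise non-isomorphic (\cite{FLM,DN}). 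Because $g$ fixes $V_Q$ pointwise it is a $V_Q$-module automorphism; it therefore preserves each (multiplicity-one) summand and, by Schur's lemma for irreducible modules with finite-dimensional weight spaces, acts on $V^{\gamma}$ as a scalar $c_\gamma\in\C^\times$ with $c_0=1$. Comparing both sides of $g(a_{(n)}b)=(ga)_{(n)}(gb)$ on nonzero products $V^{\gamma}\cdot V^{\gamma'}\subseteq V^{\gamma+\gamma'}$ (e.g.\ $e^{\beta}_{(m)}e^{\beta'}$ is a nonzero multiple of $e^{\beta+\beta'}$ for a suitable $m$) shows that $\gamma\mapsto c_\gamma$ is a character of $L/Q$.

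It then remains to realize this character by some $\sigma_u$. The rank hypothesis gives $\Q\otimes Q=\Q\otimes L$, so $Q^*$ is a full lattice and the homomorphism $Q^*\to\widehat{L/Q}$, $u\mapsto\big((u|\cdot)+\Z\big)$, is well defined with kernel exactly $L^*$; a short index computation gives $[Q^*:L^*]=[L:Q]=|L/Q|$, so this map descends to an isomorphism $Q^*/L^*\cong\widehat{L/Q}$. Choosing $u\in Q^*$ dual to the character $\gamma\mapsto c_\gamma$, one checks that $\sigma_u$ acts on $V^{\gamma}=V_{Q+\beta}$ by the single scalar $\exp(-2\pi\sqrt{-1}(u|\beta))=c_\gamma$ (the value is independent of the representative $\beta$ since $u\in Q^*$), whence $g=\sigma_u$ and $\subseteq$ follows. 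I expect the main obstacle to be the module-theoretic step: establishing that $V_L$ decomposes as a multiplicity-free sum of pairwise non-isomorphic irreducible $V_Q$-modules, so that Schur's lemma reduces the a priori arbitrary automorphism $g$ to a character of the finite group $L/Q$. The identification $Q^*/L^*\cong\widehat{L/Q}$ and the final matching are then routine finite-abelian-group computations, both of which crucially use $\mathrm{rank}\,Q=\mathrm{rank}\,L$ (without it, $Q^*$ would fail to span $\mathfrak{h}$ and $\sigma_h$ for $h$ orthogonal to $Q$ would give fixed-point automorphisms outside the asserted set).
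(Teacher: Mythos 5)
Your proof is correct and follows essentially the same route as the paper's: both show that $g$ fixes the subVOA $V_Q$ generated by the weight one space, apply Schur's lemma to the multiplicity-free decomposition of $V_L$ into irreducible $V_Q$-modules indexed by $L/Q$ to obtain a linear character of $L/Q$, and then realize that character as some $\sigma_u$ via the isomorphism $Q^*/L^*\cong\mathrm{Hom}(L/Q,\C^\times)$. The only difference is that you spell out details the paper asserts without proof (the character property of the scalars $c_\gamma$ and the index computation $[Q^*:L^*]=[L:Q]$), which is fine.
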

\begin{proof} It follows from the fact that $V_L$ is a $(L/Q)$-graded simple 
current extension of 
$V_Q$ if $L$ and $Q$ has the same rank.
\end{proof}

Let $\hat{L}=\{\pm e^\alpha\mid \alpha\in L\}$ be the central extension of $L$ by $\langle-1\rangle$ with the commutator relation $e^\beta e^\alpha=(-1)^{( \alpha| \beta)} e^\alpha e^\beta$.
Let $\Aut (\hat{L})$ be the set of all group automorphisms of $\hat L$ and $O(L)$ the isometry 
group of $L$. For $\varphi\in \Aut (\hat{L})$, we define the group automorphism $\bar{\varphi}$ of 
$L$ by $\varphi(e^\alpha)\in\{\pm e^{\bar\varphi(\alpha)}\}$, $\alpha\in L$.
Set $O(\hat{L})=\{\varphi\in\Aut(\hat L)\mid \bar\varphi\in O(L)\}.$  
It was proved in \cite[Proposition 5.4.1]{FLM} that there exists a short exact sequence 
\begin{equation*}
1\to {\rm Hom}(L,\Z_2)\to O(\hat{L})\ \bar{\to}\ O(L)\to 1.\label{Eq:OL}
\end{equation*}

We call $\varphi\in O(\hat{L})$ a \emph{lift} of $g$ if $\bar\varphi=g$.
A lift $\varphi\in O(\hat{L})$ of $g$ is called \emph{standard} if  
$\varphi(e^\alpha)=e^\alpha$ for $\alpha\in L^g=\{v\in L\mid g(v)=v\}$. 
Note that a standard lift of $g$ always exists (\cite{Le}) and its order is $|g|$ or $2|g|$ (see 
\cite{EMS,MoPhD} for detail).

Let $V_L$ be the lattice VOA associated with $L$.
Then $O(\hat{L})$ is a subgroup of $\Aut (V_L)$ (\cite{FLM}).
Let $K$ be the normal subgroup of $\Aut (V_L)$ generated by inner automorphisms.
By \cite{DN}, $\Aut (V)=K O(\hat{L})$.
Set $M=K\cap O(\hat{L})$.
Since $M$ contains ${\rm Hom}(L,\Z_2)$, we obtain $\Aut (V_L)/K\cong O(\hat{L})/M\cong O(L)/\overline{M}$.

Let $N$ be a Niemeier lattice, an even unimodular lattice of rank $24$, and assume that $N$ is not the Leech lattice.
Let $Q$ be the root sublattice of $N$ and let $W\subset O(N)$ be the Weyl group of $Q$.
It follows from \cite{CS} that there exists a subgroup $H$ of $O(N)$ such that $O(N)=W{:}H$, and  
$H$ is isomorphic to the automorphism group of the glue code $N/Q$.

\begin{lemma}\label{Lem:M} Let $N$ be a Niemeier lattice and let $Q$ be the root sublattice of $N$.
Let $W$ be the Weyl group of $Q$.
Then $\overline{M}=W$, or  equivalently, $\Aut (V_N)/K\cong O(N)/W\cong H$.
\end{lemma}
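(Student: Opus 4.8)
The plan is to prove the two inclusions $W\subseteq\ol{M}$ and $\ol{M}\subseteq W$ separately. Recall from \eqref{Eq:OL} that $\mathrm{Hom}(N,\Z_2)=\ker(\bar{\ })$ and that $\mathrm{Hom}(N,\Z_2)\subseteq M$; hence $M$ is the full preimage $\bar{\ }^{-1}(\ol{M})$, so it suffices to identify $\ol{M}$ inside $O(N)$. Once $\ol{M}=W$ is established, the stated isomorphisms follow from the already-proved $\Aut(V_N)/K\cong O(\hat{L})/M\cong O(N)/\ol{M}$ together with the decomposition $O(N)=W{:}H$.

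For the inclusion $W\subseteq\ol{M}$, I would realize each reflection by an inner automorphism lying in $O(\hat{L})$. Fix a root (norm $2$ vector) $\alpha$ of $Q$; the $0$-modes of the weight one vectors $e^{\pm\alpha}$ together with $\alpha\in\h$ span an $\mathfrak{sl}_2$, and the associated Weyl element $\tau_\alpha=\exp((e^{\alpha})_{(0)})\exp(-(e^{-\alpha})_{(0)})\exp((e^{\alpha})_{(0)})$ (suitably normalized) is an inner automorphism of $V_N$, hence lies in $K$. By the cocycle structure of $V_N$ (cf.\ \cite{FLM}), $\tau_\alpha$ acts on $\h$ as $s_\alpha$ and sends each $e^\beta$ to $\pm e^{s_\alpha(\beta)}$, so $\tau_\alpha\in O(\hat{L})$ with $\bar\tau_\alpha=s_\alpha$. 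Thus $s_\alpha\in\ol{M}$ for every root $\alpha$, and since these reflections generate $W$ we obtain $W\subseteq\ol{M}$.

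For the inclusion $\ol{M}\subseteq W$, I would exploit the induced action on the glue code $N/Q$. Since the subVOA of $V_N$ generated by $(V_N)_1=(V_Q)_1$ is $V_Q$, it is invariant under $\Aut(V_N)$, and decomposing $V_N=\bigoplus_{\lambda\in N/Q}V_{\lambda+Q}$ into pairwise inequivalent irreducible $V_Q$-modules yields a homomorphism $\rho\colon\Aut(V_N)\to\Aut(N/Q)$ recording the induced permutation of the cosets. Because every weight one vector has $\h$-weight in $Q$, the zero-mode of any element of $(V_N)_1$ preserves each $V_{\lambda+Q}$, whence $\rho(K)=1$; on the other hand each $\varphi\in O(\hat{L})$ sends $V_{\lambda+Q}$ to $V_{\bar\varphi(\lambda)+Q}$, so $\rho(\varphi)$ is precisely the action of $\bar\varphi$ on $N/Q$. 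Consequently, for $\varphi\in M=K\cap O(\hat{L})$ the element $\bar\varphi$ acts trivially on $N/Q$. Now $W$ acts trivially on $N/Q$ (a reflection $s_\alpha$ moves each weight only within its $Q$-coset) while $H\cong\Aut(N/Q)$ acts faithfully (\cite{CS}), so $\ker(O(N)\to\Aut(N/Q))=W$; therefore $\bar\varphi\in W$, giving $\ol{M}\subseteq W$.

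I expect the main obstacle to be the inclusion $W\subseteq\ol{M}$. The delicate point is not that the Weyl reflections come from inner automorphisms (this is immediate from the $\mathfrak{sl}_2$-triples sitting inside $(V_N)_1$), but that the resulting inner automorphism $\tau_\alpha$ genuinely lies in $O(\hat{L})$ — that is, that it permutes the $e^\beta$ up to sign rather than introducing nontrivial scalars — which rests on a careful analysis of the $\hat{L}$-cocycle as in \cite{FLM}. By contrast, the inclusion $\ol{M}\subseteq W$ is comparatively routine once the bookkeeping of the $N/Q$-action is set up, the only care being to check that $\rho$ kills $K$ and restricts on $O(\hat{L})$ to the $\bar{\ }$-action.
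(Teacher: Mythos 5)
Your proof is correct and follows essentially the same route as the paper's: reflections lift to inner automorphisms lying in $O(\hat{L})$, giving $W\subseteq\overline{M}$, while any $g\in M$ preserves each $V_Q$-submodule $V_{\lambda+Q}$ of $V_N$ and permutes these submodules according to $\bar{g}$, so $\bar{g}$ fixes every coset of $N/Q$ and therefore lies in $W$. The only difference is that you derive the key fact that the coset-wise stabilizer of $N/Q$ in $O(N)$ equals $W$ from the decomposition $O(N)=W{:}H$ together with faithfulness of the $H$-action on the glue code, whereas the paper simply asserts this stabilizer statement; note that in either formulation this fact tacitly excludes $N=E_8^3$ (trivial glue code, on which $H\cong S_3$ acts trivially), a caveat your argument shares with the paper's own proof.
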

\begin{proof} Since a lift of a reflection of $Q$ belongs to $K$ (cf.\ \cite{Hu}), we have 
$\overline{M}\supset W$.
Since $M$ preserves every irreducible $V_{Q}$-module, $\overline{M}$ acts trivially on $Q^*/Q$.
Hence $\overline{M}\subset W$.
\end{proof}

\subsubsection{Niemeier lattice with root lattice $E_6^4$}\label{S:E64}

Let $N=Ni(E_6^4)$ be a Niemeier lattice with root sublattice $Q\cong E_6^4$.
We now fix the glue code $\langle [1012], [1120],[1201]\rangle$ of $N$ as in \cite{CS} (see also \cite[Appendix A.1]{SS}).
Note that $E_6^*/E_6\cong\Z_3$ and that $N/Q\cong\Z_3^{2}$.

Let $\varphi$ be an order $3$ fixed-point-free isometry of the root lattice $E_6$.
Let $\bar{\sigma}_6$ be the order $3$ isometry of $N$ defined by
\begin{equation}
\bar{\sigma}_6:(\gamma_1,\gamma_2,\gamma_3,\gamma_4)\mapsto(\varphi(\gamma_1),\gamma_4,\gamma_2,\gamma_3).\label{Eq:sigma6}
\end{equation}
Let $\sigma_6\in O(\hat{N})$ be a standard lift of $\bar{\sigma}_6$ (cf. \cite[Appendix A]{SS} and \cite{Mi3}).
Note that the order of $\sigma_6$ is also $3$ and that the fixed-point weight $1$ subspace $(V_N^{\sigma_6})_1$ of $\sigma_6$ in $(V_N)_1$ has the Lie algebra structure $E_{6,3}A_{2,1}^3$ (see \cite{Mi3}).

\begin{proposition}\label{Prop:ConjE6} Let $g$ be an order $3$ automorphism of $V_N$.
Assume the following:
\begin{enumerate}[{\rm (1)}]
\item the Lie algebra structure of the fixed-point weight $1$ subspace $(V_N^g)_1$ is $E_{6,3}A_{2,1}^3$;
\item the weight one space of the irreducible $g$-twisted $V_N$-module is non-zero.
\end{enumerate}
Then $g$ is conjugate to $\sigma_6$ by an element of $\Aut (V_N)$.
\end{proposition}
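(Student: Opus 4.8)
The plan is to first use the action of $g$ on the weight one Lie algebra $(V_N)_1\cong E_{6,1}^4$ to pin down the ``shape'' of $g$, then to strip off the inner part of $g$ one orbit of factors at a time, and finally to invoke condition (2) to resolve the remaining discrete ambiguity coming from the gluing. First I would apply Proposition \ref{Prop:con3} to the reductive Lie algebra $(V_N)_1$. Since $g$ permutes the four simple ideals and has order $3$, the induced permutation is either trivial or a single $3$-cycle fixing one ideal. A trivial permutation can only produce fixed ideals of level $1$, so the summand $E_{6,3}$ in condition (1) forces $g$ to cyclically permute three of the four copies of $E_6$ (yielding $E_{6,3}$ by Proposition \ref{Prop:con3}(1)) and to fix the fourth copy. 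By Proposition \ref{Prop:con3}(2) and Lemma \ref{Lem:conE6}, the restriction of $g$ to the fixed copy is inner and conjugate to $\exp(\ad \frac{2\pi\sqrt{-1}}{3}\Lambda_4)$, whose fixed subalgebra is $A_{2,1}^3$. This matches the description of $\bar\sigma_6$, which cyclically permutes three copies of $E_6$ and acts on the remaining one by the order $3$ isometry $\varphi$, again with fixed subalgebra of type $A_2^3$.

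Next I would pass to the automorphism group structure of the lattice VOA. By Lemma \ref{Lem:M} we have $\Aut(V_N)/K\cong O(N)/W\cong H$, where $H\cong\Aut(N/Q)$ is the automorphism group of the glue code. Because the action on each $E_6$-factor is inner (an order $3$ isometry of $E_6$ lies in $W(E_6)\subset W$, hence in $K$) and acts trivially on $E_6^*/E_6\cong\Z_3$, the image of $g$ in $H$ records only the coordinate permutation of the four factors. Using the explicit glue code $\langle[1012],[1120],[1201]\rangle$, for which the $3$-cycle on the last three coordinates indeed preserves the code, and the transitivity of $H$ on the four coordinates, I would show that the image of $g$ is conjugate in $H$ to that of $\sigma_6$; after relabeling and replacing $g$ by a conjugate we may assume $g=\kappa\sigma_6$ for an inner automorphism $\kappa$.

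The inner part $\kappa$ is then removed in two stages. On the three cyclically permuted factors, Lemma \ref{L:perm3}, applied with the pure permutation in the role of $g$ and the inner contribution in the role of $f$, produces an inner $x$ conjugating the permutation to $\kappa\sigma_6$ restricted to these factors, so the inner contribution there is absorbed. On the fixed factor, Lemma \ref{Lem:conE6} already identifies the action up to inner conjugacy. After these reductions, $g$ is conjugate to $\sigma_u\sigma_6$, where by Lemma \ref{Lem:con1} (recalling $N^*=N$) the factor $\sigma_u$ acts trivially on $(V_N)_1$, so that $u$ ranges over the finite group $Q^*/N\cong\Z_3^2$. Every such candidate satisfies condition (1), so condition (1) alone cannot finish the argument.

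The decisive and most delicate step is to use condition (2) to eliminate the spurious values of $u$. First, Lemma \ref{Lem:con4} absorbs every $\sigma_v$ with $\sum_{i=0}^2\sigma_6^i(v)=0$ --- in particular all contributions on the fixed factor, where $1-\varphi$ is invertible, and the trace-zero part on the three permuted factors --- so that a representative $u$ may be chosen in the $\sigma_6$-fixed subspace of the Cartan subalgebra, i.e.\ diagonal on the permuted factors. For such $u$ the relation $\sigma_6(u)=u$ lets me realize the irreducible $\sigma_u\sigma_6$-twisted module as the $\Delta(u,z)$-deformation $M^{(u)}$ of the irreducible $\sigma_6$-twisted module via Proposition \ref{Prop:twist}; computing its lowest $L(0)$-weight and weight one space through the formula of Lemma \ref{Lem:lowestwt} and the integrality constraint of Lemma \ref{Lem:wtmodule}, I would show that nonvanishing of the weight one space forces $u$ into the single residual class corresponding to $\sigma_6$, whence $g\sim\sigma_6$. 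I expect the main obstacle to be precisely this last step: the bookkeeping of the finitely many glue classes $u\in Q^*/N$, the evaluation of the twisted-sector lowest weights under $\Delta(u,z)$, and the verification that condition (2) isolates the class of $\sigma_6$ require careful use of the explicit structure of $N$ and of the standard lift $\sigma_6$.
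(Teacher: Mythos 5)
Your proposal is correct and takes essentially the same route as the paper's proof: reduce via Proposition \ref{Prop:con3} and the glue-code group $H\cong 2.S_4$ to $g=x\sigma_6$ with $x$ inner, trivialize $x$ on $(V_N)_1$ using Lemmas \ref{L:perm3} and \ref{Lem:conE6}, write $x=\sigma_u$ by Lemma \ref{Lem:con1}, project $u$ to the $\sigma_6$-fixed space by Lemma \ref{Lem:con4}, and finally invoke condition (2) on the $\Delta$-deformed twisted module to eliminate the residual class. The one step you leave as ``bookkeeping'' is what the paper settles by a single norm computation: the nonzero residual representative is $u'=\pm\frac{1}{3}[0111]$ with $\langle u'|u'\rangle=4/9$, so by \eqref{Eq:Lh} the $\sigma_{u'}\sigma_6$-twisted module has all $L(0)$-weights in $2/9+\frac{1}{3}\Z$ and hence zero weight-one space, forcing $u'=0$.
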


\begin{proof} For $N=Ni(E_6^4)$, 
$H$ has the shape $2.S_4$ and it has exactly one conjugacy class of order $3$.
By Lemma \ref{Lem:M}, there exists a group homomorphism $p:\Aut (V_N) \to H\cong 2.S_4$ with the kernel $K$.
By \eqref{Eq:sigma6}, the assumption (1) and Proposition \ref{Prop:con3}, $p(g)$ is conjugate to 
$p(\sigma_6)$  by an element of $H$. Hence, we may assume that 
\begin{equation}
g=x\sigma_6,\qquad x\in K. \label{Eq:xs6}
\end{equation} 

Let $(V_N)_1\cong\bigoplus_{i=1}^4\mathfrak{g}_i$, where $\mathfrak{g}_i$ is a simple Lie algebra of type $E_6$.  
Then  every $\mathfrak{g}_i$ is preserved by elements of $K$ and we have 
$g(\mathfrak{g}_1)=\mathfrak{g}_1$, $g(\mathfrak{g}_2)=\mathfrak{g}_3$, 
$g(\mathfrak{g}_3)=\mathfrak{g}_4$ and $g(\mathfrak{g}_4)=\mathfrak{g}_2$ by \eqref{Eq:sigma6} and 
\eqref{Eq:xs6}.
Let $K_i$ be the subgroup of $\Aut (V_N)$ generated by inner automorphisms of $\mathfrak{g}_i$.
By Lemma \ref{L:perm3}, we may assume that $x\sigma_6=\sigma_6$ on $\bigoplus_{i=2}^4\g_i$ up to conjugation by an element of $K_2K_3K_4$.
Since the fixed point Lie algebras for $\sigma_6$ and $x\sigma_6$ 
in $\mathfrak{g}_1$ are both $A_2^3$, by Lemma \ref{Lem:conE6}, we may also assume that 
$x\sigma_6=\sigma_6$ on $\mathfrak{g}_1$ up to conjugation by an element of $K_1$.
Thus,  $x=\sigma_u$ for some element $u\in Q^*/N(\cong\Z_3^2)$ by Lemma \ref{Lem:con1}. 
Let $P:\h\to \{a\in\h\mid \sigma_6(a)=a\}$ be the orthogonal projection  and set $u'=P(u)$.
Then $\sum_{i=0}^2g^i(u-u')=0$, and by Lemma \ref{Lem:con4}, $x\sigma_6$ is conjugate to $\sigma_{u'}\sigma_6$ by an element of $K$.

Recall that $Q^*/N$ is generated by $[1000]$ and $[0100]$ and that $P([1000])=0$ and $P([0100])=(1/3)[0111]$.
Hence we may assume $u'=0$ or $u'=\pm(1/3)[0111]$.

We will show that $u'=0$. Suppose, for a contradiction, that $u'=\pm(1/3)[0111]$.
Then $\langle u'|u'\rangle=(u'|u')=4/9$.
Note that $\sigma_6(u')=u'$. 
Let $V_N[\sigma_6]$ be the irreducible $\sigma_6$-twisted $V_N$-module.
It follows from $(V_N[\sigma_6])_1\neq0$ (cf.\ \cite{Mi3,SS}) that $L(0)$-weights of $V_N[\sigma_6]$ belong to $(1/3)\Z$.
Since $(P(N)|u')\subset(1/3)\Z$, the spectrum of $u'_{(0)}$ on $V_N[\sigma_6]$ belongs to $(1/3)\Z$ (cf.\ \cite[(2.2.8)]{SS}).
Hence by \eqref{Eq:Lh}, the $L(0)$-weights of irreducible $\sigma_{u'}\sigma_6$-twisted $V_N$-module belong to $2/9+(1/3)\Z$, which contradicts the assumption (2).
Thus $u'=0$ and we obtain this proposition.
\end{proof}

\subsubsection{Niemeier lattice with root lattice $D_4^6$}\label{S:D46}

Let $N=Ni(D_4^6)$ be a Niemeier lattice with root sublattice $Q\cong D_4^6$.
Consider the glue code of $N/Q$ generated by 
$$ [111111],\ [222222],\ [002332],\ [023320],\ [032023],\ [020233]$$ as in \cite{CS} (see also \cite[Section 4.2]{SS}).
Note that $D_4^*/D_4\cong\Z_2\times\Z_2$ and $N/Q\cong\Z_2^{6}$.
Let $\bar{\sigma}_2$ and $\bar{\sigma}_4$ be order $3$ isometries of $N$ defined by
\begin{align}
\bar{\sigma}_2:(\gamma_1,\gamma_2,\dots,\gamma_6)&\mapsto(\varphi(\gamma_1),\varphi(\gamma_2),\dots,\varphi(\gamma_6)),\label{Eq:sigma2}\\
\bar{\sigma}_4:(\gamma_1,\gamma_2,\dots,\gamma_6)&\mapsto(\psi(\gamma_1),\varphi(\gamma_2),\varphi^{-1}(\gamma_3),\gamma_6,\varphi^{-1}(\gamma_4),\psi(\gamma_5))\label{Eq:sigma4},
\end{align}
where $\varphi$ is an order $3$ fixed-point-free isometry of $D_4$ (not in the Weyl group) and $\psi$ is an order $3$ isometry of $D_4$ in the Weyl group.
Let $\sigma_2,\sigma_4\in O(\hat{N})$ be standard lifts of isometries $\bar{\sigma}_2,\bar{\sigma}_4$ (cf.\ \cite[Section 4.2]{SS} and \cite[Section 6.2]{SS}), respectively.
Note that $\sigma_2$ and $\sigma_4$ have also order $3$ and that the fixed-point subspaces $(V_N^{\sigma_2})_1$ and $(V_N^{\sigma_4})_1$ of $(V_N)_1$ have the Lie algebra structures $A_{2,3}^6$ and $A_{2,3}^2D_{4,3}A_{1,1}^3U(1)$, respectively.

\begin{proposition}\label{Prop:ConjD4} Let $g$ be an order $3$ automorphism of $V_N$.
\begin{enumerate}[{\rm (1)}]
\item If the Lie algebra structure of $(V_N^g)_1$ is $A_{2,3}^6$, then $g$ is conjugate to $\sigma_2$.
\item If the Lie algebra structure of $(V_N^g)_1$ is $A_{2,3}^2D_{4,3}A_{1,1}^3U(1)$, then $g$ is conjugate to $\sigma_4$.
\end{enumerate}
\end{proposition}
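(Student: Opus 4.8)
The plan is to follow the proof of Proposition \ref{Prop:ConjE6} closely, replacing the small group $2.S_4$ by the automorphism group $H$ of the glue code $N/Q$ of $Ni(D_4^6)$. As in the $E_6^4$ case, I would begin from the exact sequence $1\to K\to\Aut(V_N)\to H\to 1$ furnished by Lemma \ref{Lem:M}, where $K$ is the inner automorphism group and $p\colon\Aut(V_N)\to H$ is the projection. In both parts the strategy is the same: first pin down $p(g)\in H$ up to conjugacy, then reduce to $g=x\sigma_i$ with $x\in K$, normalize $x$ componentwise using the lemmas of Section 3.1, and finally dispose of the residual inner twist by Lemmas \ref{Lem:con1} and \ref{Lem:con4}.

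The first step is to read off the coordinate-permutation and triality type of $g$ from the prescribed Lie algebra $(V_N^g)_1$ via Proposition \ref{Prop:con3}, using $(V_N)_1\cong D_{4,1}^6$. In case (1), since $A_{2,3}^6$ contains no $D_{4,3}$ factor, Proposition \ref{Prop:con3}(1) forbids any $3$-cycle of components, so $g$ preserves each of the six $D_4$'s; and since every fixed ideal has type $A_{2,3}$ rather than the $A_{1,1}^3U(1)$ that an inner restriction would produce, Proposition \ref{Prop:con3}(2),(3) force $g$ to act on each component by an outer (nontrivial) triality. This is exactly the profile of $\sigma_2$. In case (2), the single $D_{4,3}$ factor forces, by Proposition \ref{Prop:con3}(1), precisely one $3$-cycle of three components; the two $A_{2,3}$ factors force outer trialities on two further components (Proposition \ref{Prop:con3}(3)); and the $A_{1,1}^3U(1)$ factor forces an inner order $3$ automorphism on the last component (Proposition \ref{Prop:con3}(2) together with Lemma \ref{Lem:conD4}(2)). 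This matches $\sigma_4$. In each case I would then argue that this profile determines a single conjugacy class in $H$, so that $p(g)$ is $H$-conjugate to $p(\sigma_2)$ (resp.\ $p(\sigma_4)$), allowing us to assume $g=x\sigma_2$ (resp.\ $g=x\sigma_4$) with $x\in K$.

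The second step is the componentwise normalization, parallel to the use of Lemma \ref{Lem:conE6} in Proposition \ref{Prop:ConjE6}. On the cyclically permuted triple of components I would apply Lemma \ref{L:perm3} to arrange $x\sigma_i=\sigma_i$ there; on each outer-triality component I would apply Lemma \ref{Lem:conD4}(1); and on the inner component in case (2) I would apply Lemma \ref{Lem:conD4}(2). This makes $x\sigma_i=\sigma_i$ on all of $(V_N)_1$ after conjugation by inner automorphisms of the individual simple ideals. At that point Lemma \ref{Lem:con1} gives $x=\sigma_u$ for some $u\in Q^*/N$, and replacing $u$ by $u-P(u)$ via Lemma \ref{Lem:con4}, where $P$ is the orthogonal projection of $\h$ onto the $\sigma_i$-fixed subspace, reduces $x$ to $\sigma_{u'}$ with $u'=P(u)$.

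The hard part will be the two endpoints of this scheme. First, I must determine the order $3$ conjugacy classes of $H$, which, unlike $2.S_4$, is the considerably larger automorphism group of the $D_4^6$ glue code, assembled from coordinate permutations together with independent triality actions on the six factors; I must then verify that the profiles above isolate the classes of $p(\sigma_2)$ and $p(\sigma_4)$. Here the $\tau$ versus $\tau^{-1}$ ambiguity of the triality on each component, and its interaction with the requirement of preserving the glue code, must be controlled carefully. Second, in contrast to Proposition \ref{Prop:ConjE6}, the present statement carries no hypothesis on the twisted module, so I must show the residual $\sigma_{u'}$ is trivial from the Lie algebra datum alone. I expect that any nonzero admissible $u'$ in the $\sigma_i$-fixed subspace would alter the type of one of the folded ideals (for instance degrading an $A_{2,3}$ or the $A_{1,1}^3U(1)$ to a different fixed-point subalgebra), thereby violating the hypothesis on $(V_N^g)_1$; this would force $u'=0$ and complete the identification $g\sim\sigma_2$ (resp.\ $g\sim\sigma_4$).
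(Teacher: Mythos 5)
Your first two steps coincide with the paper's proof: the profile analysis via Proposition \ref{Prop:con3} (no $3$-cycles and outer triality on every component in case (1); exactly one $3$-cycle, two outer components and one inner component in case (2)), the reduction to $g=x\sigma_2$ resp.\ $g=x\sigma_4$ with $x\in K$ using Lemma \ref{Lem:M} and the class data for $H\cong 3.S_6$ (which the paper simply reads off from the Atlas: there are exactly three classes of order $3$, represented by an element of the normal $\Z_3$ and by elements of cycle shapes $3^1$ and $3^2$, so your worry about controlling $\tau$ versus $\tau^{-1}$ dissolves at this level), and the componentwise normalization via Lemmas \ref{L:perm3} and \ref{Lem:conD4} followed by Lemma \ref{Lem:con1} to write $x=\sigma_u$, $u\in Q^*/N$.

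Your final step, however, contains a genuine gap: you propose to force $u'=0$ ``from the Lie algebra datum alone,'' expecting that a nonzero $u'$ would degrade the type of some folded ideal. This cannot work. By the very statement of Lemma \ref{Lem:con1}, every $\sigma_u$ with $u\in Q^*/N$ acts \emph{trivially} on $(V_N)_1$, so $\sigma_u\sigma_i$ and $\sigma_i$ have identical actions on the weight one space and hence literally the same fixed-point Lie algebra $(V_N^{\sigma_u\sigma_i})_1=(V_N^{\sigma_i})_1$ for every $u$; the hypothesis on $(V_N^g)_1$ is blind to the residual twist. (This is precisely why Proposition \ref{Prop:ConjE6} needed the extra hypothesis (2) on the twisted module: there the Lie algebra could not exclude $u'=\pm\tfrac13[0111]$.) The paper closes the gap by two different mechanisms, neither of which is Lie-algebraic. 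In case (1), $\bar\sigma_2$ acts by the fixed-point-free isometry $\varphi$ on every component, so it is fixed-point-free on the whole Cartan subalgebra; hence $\sum_{i=0}^{2}g^i(u)=0$ automatically and Lemma \ref{Lem:con4} applies (in your notation $P=0$, so $u'=0$ holds trivially, not because of the hypothesis on $(V_N^g)_1$). In case (2), where $P\neq 0$ and $u'$ need not vanish at all, the paper uses that $Q^*/N\cong\Z_2^6$, so $\sigma_u$ has order $1$ or $2$, while $g=\sigma_u\sigma_4$ has order $3$ by hypothesis; then the explicit computation
\begin{equation*}
(\sigma_4\sigma_u\sigma_4^{-1})^{-1}(\sigma_u\sigma_4)(\sigma_4\sigma_u\sigma_4^{-1})
=\sigma_4\sigma_u(\sigma_4^{-1}\sigma_u)^3\sigma_u=\sigma_4
\end{equation*}
shows directly that $\sigma_u\sigma_4$ is conjugate to $\sigma_4$ (cf.\ \cite[Remark 4.2 (2)]{ISS}). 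You would need to replace your final paragraph by these two arguments (or equivalents); as written, the plan terminates in a step that is false as stated.
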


\begin{proof} 
The proof is similar to Proposition \ref{Prop:ConjE6}. First we note that $H$ has the shape 
$3.S_6$ and it has exactly three conjugacy classes 
of order $3$, which are represented by an element in the center and elements with cycle shapes 
$3^1$ and $3^2$ (see \cite[p4]{Atlas}).
Let   $p:\Aut (V_N) \to H\cong 3.S_6$ be the natural group homomorphism with the  kernel $K$ 
(see Lemma \ref{Lem:M}).  
Set $(V_N)_1\cong\bigoplus_{i=1}^6\mathfrak{g}_i$, where $\mathfrak{g}_i$ is a simple Lie algebra of type $D_4$. 
Let $K_i$ be the subgroup of $\Aut (V_N)$ generated by inner automorphisms of $\mathfrak{g}_i$.

(1) By Proposition \ref{Prop:con3}, we have 
$g(\mathfrak{g}_i)=\mathfrak{g}_i$ for $1\le i\le 6$. Since $\sigma_2$ also stabilizes all 
$\g_i$, $p(g)$ is conjugate in $H$ to 
$p(\sigma_2)$, an element in the center of $H$. Then by Lemma \ref{Lem:conD4} (1) and Lemma 
\ref{Lem:con1}, we may assume, up to conjugation, $g = \sigma_u \sigma_2$ for some $u\in Q^*/N$.  
Since $\sigma_2$ is fixed-point-free on the canonical Cartan subalgebra of $(V_N)_1$, we have $\sum_{i=0}^2 g^i(u)=0$.
By applying Lemma \ref{Lem:con4} to this case, $g = \sigma_u \sigma_2$ is conjugate to $\sigma_2$, 
which proves (1).

(2) By \eqref{Eq:sigma4} and Proposition \ref{Prop:con3}, $p(g)$ is conjugate in $H$ to 
$p(\sigma_4)$, an element with cycle shape $3^1$ in $H$. Thus, we may assume that 
$
g=x\sigma_4, \ x\in K.
$
In this case, we have $g(\mathfrak{g}_i)=\mathfrak{g}_i$ 
for $1\le i\le 3$, $g(\mathfrak{g}_4)=\mathfrak{g}_5$, $g(\mathfrak{g}_5)=\mathfrak{g}_6$ and 
$g(\mathfrak{g}_6)=\mathfrak{g}_4$.
By Lemma \ref{L:perm3}, we may assume that $x\sigma_4=\sigma_4$ on $\bigoplus_{i=4}^6\g_i$ up to conjugation by an element of $K_4K_5K_6$.
By Lemma \ref{Lem:conD4} (1) and (2), we  may also assume that $x\sigma_4=\sigma_4$ on 
$\bigoplus_{i=1}^3\mathfrak{g}_i$ up to conjugation by an element of $K_1K_2K_3$.
Hence $x\sigma_4=\sigma_4$ on $(V_N)_1$. 
By Lemma \ref{Lem:con1}, $x=\sigma_u$ for some $u\in Q^*/N$.
Since $Q^*/N\cong\Z_2^6$, the order of $\sigma_u$ is $1$ or $2$.
Now suppose $\sigma_u$ has order $2$. 
Since $\sigma_u\sigma_4$ has order $3$, so does $(\sigma_u\sigma_4)^{-1}=\sigma_4^{-1}\sigma_u$.
Hence  $$
(\sigma_4\sigma_u\sigma_4^{-1})^{-1}(\sigma_u\sigma_4)(\sigma_4\sigma_u\sigma_4^{-1}
)=\sigma_4\sigma_u(\sigma_4^{-1}\sigma_u)^3\sigma_u=\sigma_4$$
and we have the desired result.
\end{proof}

\section{$\Z_n$-orbifold construction and reverse orbifold construction}

\subsection{$\Z_n$-orbifold construction}\label{S:Orb}

In this subsection, we will review the $\Z_n$-orbifold construction associated with a holomorphic 
VOA and an automorphism of arbitrary finite order from \cite{EMS,MoPhD}.

Let $V$ be a strongly regular holomorphic VOA.
Let $g$ be an order $n$ automorphism of $V$.
For $0\le i\le n-1$, we denote  the unique irreducible $g^i$-twisted $V$-module by $V[g^i]$ (cf. 
\cite[Theorem 1.2]{DLM2}). 
Note that $V[g^0]=V$.
Moreover, there exists an action $\phi_j: \langle g\rangle \to \Aut_\C (V[g^j])$ such that for all 
$v\in V$ and $i\in\Z,$  
$$\phi_j(g^i)Y_{V[g^j]}(v,z)\phi_j(g^i)^{-1}=Y_{V[g^j]}(g^iv,z).$$ Note that such an action is 
unique up to a multiplication of an $n$-th root of unity.
Set $\phi_0(g)=g\in\Aut (V)$.  For $0\leq j, k\leq n-1$, denote 
\[
W^{(j,k)} = \{ w\in V[g^j] \mid \phi_j(g) w = e^{(2\pi k \sqrt{-1})/n}w\}.
\]
Let $V^g$ be the fixed-point subspace of $g$, which is a full subVOA of $V$. Note that 
$W^{(0,0)}=V^g$ and all $W^{(j,k)}$'s are irreducible $V^g$-modules (cf. \cite[Theorem 2]{MT}). It 
was also shown recently in \cite{CM,Mi} that $V^g$ is strongly regular.
Moreover, any irreducible $V^g$-module is a submodule of $V$ or $V[g^i]$ for some $i$, and  there 
exist exactly $n^2$ non-isomorphic irreducible $V^g$-modules (see \cite{DRX}), which can be  
represented by $\{W^{(j,k)}\mid 0\le j,k\le n-1\}$.
By calculating the $S$-matrix of $V^g$, it was proved in \cite{EMS,MoPhD} that all irreducible 
$V^g$-modules $W^{(j,k)}$ are simple current modules. It implies that the set of isomorphism 
classes 
of irreducible $V^g$-modules, denoted by $R(V^g)$, forms an abelian group of order $n^2$ under the 
fusion product. 
We often identify an element in $R(V^g)$ with its representative irreducible $V^g$-module. Then 
$R(V^g)=\{W^{(j,k)}\mid 0\le j,k\le n-1\}$.

In addition, we assume the following:
\begin{enumerate}[{\rm (I)}]
\item For $1\le i\le n-1$, the lowest $L(0)$-weight of $V[g^i]$ belongs to $(1/n)\Z_{>0}$.
\end{enumerate}
Under the above assumption,  it is proved in \cite{EMS,MoPhD} that the abelian group $R(V^g)$ is 
isomorphic to $\Z_n\times\Z_n$.
Moreover, one can choose the $\phi_i$'s such that
\begin{itemize}
\item $W^{(i,j)}\fusion_{V^g} W^{(k, \ell)} \cong  W^{(i+k, j+\ell)}$, where $\fusion_{V^g}$ is the 
fusion product of $V^g$-modules;
\item the lowest $L(0)$-weight of $W^{(i,j)}$ belongs to $ji/n+\Z$.
\end{itemize}

\begin{theorem}[\cite{EMS,MoPhD}]\label{Thm:EMS} The $V^g$-module 
$$\widetilde{V}_g=\bigoplus_{i=0}^{n-1}W^{(i,0)}$$ has a strongly regular holomorphic VOA structure 
as a $\Z_n$-graded simple current extension of $V^g$.
\end{theorem}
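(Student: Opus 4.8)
The plan is to realize $\widetilde{V}_g$ as a simple current extension of $V^g$ along a \emph{Lagrangian} subgroup of the fusion group $R(V^g)$ and to read off all of its properties from the quadratic form supplied by the conformal weights. First I would record that, by the fusion rule $W^{(i,0)}\fusion_{V^g}W^{(k,0)}\cong W^{(i+k,0)}$, the set $D=\{W^{(i,0)}\mid 0\le i\le n-1\}$ is exactly the subgroup $\Z_n\times\{0\}$ of $R(V^g)\cong\Z_n\times\Z_n$; in particular it is closed under fusion and under taking contragredients, since $(W^{(i,0)})'\cong W^{(-i,0)}$. Each $W^{(i,0)}$ is a simple current, and its lowest $L(0)$-weight lies in $0\cdot i/n+\Z=\Z$; as an untwisted $V^g$-module its entire $L(0)$-spectrum then lies in $\Z$. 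Thus every summand is genuinely $\Z$-graded, which is what allows the extension to be an honest ($\Z$-graded) VOA rather than merely an abelian intertwining algebra.

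Next I would invoke the general theory of simple current extensions (as developed by Dong--Li--Mason and Yamauchi, and in the form closest to our setting in \cite{EMS,MoPhD}) to equip $\widetilde{V}_g=\bigoplus_{i=0}^{n-1}W^{(i,0)}$ with a vertex operator: the products are built from the intertwining operators of type $\binom{W^{(i+k,0)}}{W^{(i,0)}\,W^{(k,0)}}$ furnished by the modular tensor category of $V^g$-modules, with vacuum and conformal vectors inherited from $W^{(0,0)}=V^g$. The only obstruction to assembling these into an associative, local vertex operator is a cohomological one governed by the restriction of the conformal-weight quadratic form to $D$; because this restriction vanishes (every $W^{(i,0)}$ has integral conformal weight) and $D$ is cyclic, the obstruction is trivial and a consistent $\Z_n$-graded VOA structure exists. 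Strong regularity is then inherited: rationality and $C_2$-cofiniteness are preserved under extension by a finite group of simple currents; CFT-type holds because for $i\neq0$ the lowest weight of $W^{(i,0)}$ is an integer bounded below by the positive lowest weight of $V[g^i]$ (Condition (I)), hence $\ge1$, so $(\widetilde{V}_g)_0=\C\vacuum$; and self-duality follows from $(W^{(i,0)})'\cong W^{(-i,0)}\in D$.

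The crucial point is holomorphicity, which I would obtain from the quadratic-form bookkeeping on $R(V^g)$. Writing $W^{(i,j)}\leftrightarrow(i,j)$, the lowest-weight data give the $\Q/\Z$-valued quadratic form $q(i,j)=ij/n\bmod\Z$ and associated bilinear form $b\bigl((i,j),(k,\ell)\bigr)=(i\ell+kj)/n\bmod\Z$. The subgroup $D=\Z_n\times\{0\}$ is isotropic, and a direct computation gives $D^{\perp}=\{(k,\ell)\mid i\ell/n\in\Z\ \forall i\}=\Z_n\times\{0\}=D$, so $D$ is Lagrangian. Since the irreducible $\widetilde{V}_g$-modules are parametrized by $D^{\perp}/D$, we obtain $|\irr(\widetilde{V}_g)|=|D^{\perp}/D|=1$, i.e.\ $\widetilde{V}_g$ is holomorphic.

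The main obstacle I expect is the second step: establishing that the intertwining operators can actually be glued into an associative and commutative vertex operator, i.e.\ verifying the vanishing of the associativity obstruction and normalizing the $\phi_i$ so that the relevant cocycle is trivial. This is precisely where Condition (I) and the integrality of the conformal weights of the modules in $D$ are used, and where the full strength of the $S$-matrix computation of \cite{EMS,MoPhD} enters; by contrast the fusion-group identification, the computation $D^{\perp}=D$, and the inheritance of regularity are comparatively formal.
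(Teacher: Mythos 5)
Your proposal is correct, but it follows the route of the cited sources \cite{EMS,MoPhD} rather than the proof this paper actually supplies. The paper proves the theorem only in the case it needs for its applications, namely $g=\sigma_h$ an \emph{inner} automorphism of \emph{prime} order $p$ (see Remark \ref{R:Uni} (3) and Appendix A, Theorems \ref{Thm:VOAstr} and \ref{Thm:hol}). There the VOA structure on $\tilde{V}=V^{g}\oplus\bigoplus_{r=1}^{p-1}(V^{(rh)})_\Z$ is not obtained from a categorical obstruction argument: instead one takes the Dong--Li--Mason abelian intertwining algebra \cite{DLM} on $\bar{U}=\bigoplus_{i,j}U^{(ih,js)}$, whose generalized Jacobi identity carries explicit cocycle data $\eta$, $\bar{C}$ and $h(\cdot,\cdot,\cdot)$, and verifies by direct computation (using $\langle h|h\rangle\in2\Z$ and the fact that the lattice $P$ has rank one, so $C_1\equiv1$) that all three maps are trivial on $\tilde{V}$, so the generalized Jacobi identity collapses to the usual one. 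The second half of the appendix proof then runs parallel to your last step: $C_2$-cofiniteness of $V^g$ from \cite{Mi}, rationality from \cite{CM}, the simple-current property via the Verlinde formula (cf.\ \cite{EMS}), and \cite{LY} for rationality, $C_2$-cofiniteness and the classification of irreducible modules of the $\Z_p$-graded simple current extension --- the concrete counterpart of your computation $D^{\perp}=D$ and the parametrization of $\irr(\widetilde{V}_g)$ by $D^{\perp}/D$. The trade-off is clear: your EMS-style argument (fusion group $\Z_n\times\Z_n$, quadratic form $q(i,j)=ij/n$, extension along the Lagrangian subgroup $\Z_n\times\{0\}$) covers arbitrary finite order and automorphisms that need not be inner, exactly as the theorem is stated, but leans on the full $S$-matrix machinery; the paper's appendix is self-contained and computationally explicit, avoiding the obstruction-theoretic step altogether, but only treats prime-order inner automorphisms --- which suffices for Sections 6--8, where every $\sigma$ used is some $\sigma_h$ of order $3$.
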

The construction of $\widetilde{V}_g$ is often called the \emph{$\Z_n$-orbifold construction} 
associated with $V$ and $g$.
Note that $W^{(1,0)}$ is the unique irreducible $V^g$-submodule of $V[g]$ with integral weights and 
$\{ W^{(i,0)}\mid 0\leq i\leq n-1\}$ is the subgroup of $R(V^g)$ generated by $W^{(1,0)}$.
Hence $\widetilde{V}_g$ is uniquely determined by $V$ and $g$, up to isomorphism.

\begin{remark}\label{R:Uni}
\begin{enumerate}[(1)]
\item When $n$ is prime,  $W^{(i,0)}$ is just the subspace of $V[g^i]$ with integral weights for 
$1\le i\le n-1$.
\item Let $g'$ be an automorphism of $V$ which is conjugate to $g$.
Then $g'$ also satisfies Condition (I), and $\widetilde{V}_{g'}$ is isomorphic to $\widetilde{V}_g$ 
as a VOA.
\end{enumerate}
\end{remark}

The following dimension formula is described in \cite[(73)]{Mo}. A proof can be found in 
\cite{MoPhD} (see \cite{EMS2} for more general formulas).

\begin{theorem} [\cite{MoPhD,EMS2}] \label{Thm:Dimformula}
Let $V$ be a strongly regular holomorphic VOA of central charge $24$.
Let $g$ be an automorphism of $V$ of order $3$ such that Condition (I) holds. Then 
\begin{align*}
\dim V_1+\dim (\tilde{V}_g)_1=&4\dim (V^g)_1
-36(\dim V[g]_{1/3}+\dim V[g^2]_{1/3})\\
&-12(\dim V[g]_{2/3}+\dim V[g^2]_{2/3})
+24.
\end{align*}
\end{theorem}

\subsection{Reverse orbifold construction and uniqueness of a holomorphic VOA}\label{sec:4.2}

Let $V$ be a strongly regular holomorphic VOA and let $g$ be an order $n$ automorphism of $V$ 
satisfying Condition (I).
Let $W=\widetilde{V}_g$ be the resulting holomorphic VOA by applying $\Z_n$-orbifold construction to 
$V$ and $g$.
Then the $\Z_n$-grading of $W$ defines an automorphism $f$ of an order $n$ on $W$. In this case, 
$W^f=V^g$ and, for $1\le i\le n-1$, the irreducible $f^i$-twisted $W$-module is a direct sum of 
irreducible $V^g$-modules.
Hence $f$ also satisfies Condition (I).
By the uniqueness of the VOA  obtained by the orbifold construction, we obtain the 
following:
\begin{corollary}[cf.\ \cite{EMS,MoPhD}]\label{C:RevO}
The VOA $\widetilde{W}_f$ is isomorphic to $V$.
\end{corollary}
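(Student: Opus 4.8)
The plan is to track everything through the group $R(V^g)\cong\Z_n\times\Z_n$ of irreducible $V^g$-modules and to observe that the orbifold by $g$ and the orbifold by $f$ correspond to its two coordinate directions, so that orbifolding $W$ by $f$ merely reassembles the $g$-eigenspace decomposition of $V$. Concretely, I would fix notation $R(V^g)=\{W^{(j,k)}\mid 0\le j,k\le n-1\}$ with $W^{(i,j)}\boxtimes_{V^g}W^{(k,\ell)}\cong W^{(i+k,j+\ell)}$ as in Section~\ref{S:Orb}. By definition $W^{(0,k)}=\{v\in V\mid gv=e^{2\pi k\sqrt{-1}/n}v\}$, so $V=\bigoplus_{k=0}^{n-1}W^{(0,k)}$ is exactly the $g$-eigenspace decomposition, while Theorem~\ref{Thm:EMS} gives $W=\widetilde V_g=\bigoplus_{i=0}^{n-1}W^{(i,0)}$ graded by $i$; since $f$ acts on $W^{(i,0)}$ by $e^{2\pi i\sqrt{-1}/n}$, we have $W^f=W^{(0,0)}=V^g$.

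Next I would identify the twisted sectors of $W$. Fusing a component $W^{(i,0)}$ of $W$ into $W^{(j,s)}$ lands in $W^{(i+j,s)}$, so $\bigoplus_j W^{(j,s)}$ is stable under the action of $W$ and is a $W$-module; a weight/monodromy count identifies it with the irreducible $f^s$-twisted $W$-module $W[f^s]$, because for $w\in W^{(i,0)}$ (of integral weight) the operator $Y(w,\cdot)$ shifts the $L(0)$-weight within $\bigoplus_j W^{(j,s)}$ by elements of $is/n+\Z$, matching the $f^s$-twisted mode condition for the $f$-eigenvalue $e^{2\pi i\sqrt{-1}/n}$ of $w$. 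This is the decomposition into $V^g$-modules asserted just before the statement, and it also yields Condition (I) for $f$.

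Now I would apply Theorem~\ref{Thm:EMS} to $W$ and $f$. By the description at the end of Section~\ref{S:Orb}, $\widetilde W_f$ is generated over $W^f=V^g$ by the unique irreducible $V^g$-submodule of $W[f]=\bigoplus_j W^{(j,1)}$ with integral $L(0)$-weight; since the lowest weight of $W^{(j,1)}$ lies in $j/n+\Z$, which is integral precisely when $j=0$, this distinguished simple current is $W^{(0,1)}$. It generates the subgroup $\{W^{(0,k)}\mid 0\le k\le n-1\}$ of $R(V^g)$, so as a sum of $V^g$-modules $\widetilde W_f=\bigoplus_{k=0}^{n-1}W^{(0,k)}=V$.

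The remaining step, which I expect to be the only delicate one, is to promote this equality of $V^g$-module decompositions to an isomorphism of VOAs. Both $V$ (graded by the $g$-eigenvalue) and $\widetilde W_f$ are holomorphic $\Z_n$-graded simple current extensions of $V^g$ living on the same space $\bigoplus_k W^{(0,k)}$, so I would invoke the uniqueness of such an extension — the same uniqueness used at the end of Section~\ref{S:Orb} to conclude that $\widetilde V_g$ is determined by $V$ and $g$ up to isomorphism — to deduce $\widetilde W_f\cong V$. The subtlety is precisely that one must know the abstractly constructed orbifold VOA structure coincides with the original product structure on $V$, rather than with some inequivalent extension by the same simple currents; this is where the input from \cite{EMS,MoPhD} enters.
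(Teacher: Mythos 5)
Your proposal is correct and takes essentially the same route as the paper: its justification of Corollary~\ref{C:RevO} is precisely the chain you spell out --- $W^f=V^g$, decomposition of the $f^i$-twisted $W$-modules into the simple currents $W^{(j,k)}$, Condition (I) for $f$, identification of the integral-weight part $\bigoplus_k W^{(0,k)}$ with $V$ as a $V^g$-module, and the uniqueness of the $\Z_n$-graded simple current extension noted at the end of Section~\ref{S:Orb}. You have only made explicit (correctly) the details that the paper leaves implicit, including the final delicate step of upgrading the $V^g$-module identification to a VOA isomorphism.
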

Since the VOA $\widetilde{W}_f\cong V$ is essentially obtained by reversing the original orbifold 
construction, we call this procedure the \emph{reverse orbifold construction}, which is called the 
inverse orbifold in \cite{EMS,MoPhD}.
Based on the above corollary, one can prove the uniqueness of some holomorphic VOAs as follows:

Let $\g$ be a Lie algebra and $\mathfrak{p}$ a subalgebra of $\g$. 
Let $n\in \Z_{>0}$ and let $W$ be a strongly regular holomorphic VOA of central charge $c$.
Assume that for any strongly regular holomorphic VOA $V$ of central charge $c$ whose weight one Lie 
algebra is $\g$, there exists an order $n$ automorphism $\sigma$ of $V$ such that the following 
conditions hold:
\begin{enumerate}[{\rm (a)}]
\item $\g^{\sigma}\cong\mathfrak{p}$;
\item $\sigma$ satisfies Condition (I) and $\widetilde{V}_{\sigma}$ is isomorphic to $W$.
\end{enumerate}
In addition, we assume that any automorphism  $\varphi\in\Aut (W)$ of order $n$ satisfying (I) and 
the conditions (A) and (B) below belongs to a unique conjugacy class in $\Aut (W)$:
\begin{enumerate}[{\rm (A)}]
\item $(W^\varphi)_1$ is isomorphic to $\mathfrak{p}$;
\item $(\widetilde{W}_\varphi)_1$ is isomorphic to $\g$.
\end{enumerate}
Then any strongly regular holomorphic VOA of central charge $c$ with weight one Lie algebra $\g$ is 
isomorphic to $\widetilde{W}_\varphi$.
On the other hand, we can identify $\widetilde{V}_{\sigma}$ with $W$.
Now let $\tau$ be an order $n$ automorphism of $W$ associated with  the $\Z_n$-grading of $W$ as 
the extension of $V^{\sigma}$.
Then $(W^\tau)_1\cong V^{\sigma}_1\cong\mathfrak{p}$ by (a) and $\widetilde{W}_\tau\cong V$ by 
Corollary \ref{C:RevO}.
Hence $(\widetilde{W}_\tau)_1\cong V_1=\g$.
Since $\tau$ satisfies (A) and (B), it is conjugate to $\varphi$.
Thus $V\cong \widetilde{W}_\tau\cong \widetilde{W}_\varphi$ and the VOA structure of $V$ is 
uniquely determined. 

\begin{remark}\label{R:RO}
\begin{enumerate}[(1)]
\item In some cases, the condition (B) can be replaced by a weaker condition.
For example, we will consider the 
following condition (B') in the 
later sections:  
\begin{enumerate}
\item[(B')] $\dim W[\varphi]_1\neq0$.
\end{enumerate}
\item It turns out that we may choose $\sigma$ as an inner automorphism $\sigma_u$ for 
some $u$ in a Cartan subalgebra.
\end{enumerate}
\end{remark}

\section{Uniqueness of a holomorphic VOA with the Lie algebra $E_{6,3}G_{2,1}^3$}
In this section, we consider a  holomorphic VOA $V$ of central charge $24$ whose weight one Lie 
algebra has the type  $E_{6,3}G_{2,1}^3$. By applying the $\Z_3$-orbifold construction to $V$ 
and certain inner automorphism of order $3$, we show that one can obtain a holomorphic lattice VOA 
of central charge $24$ associated with the Niemeier lattice $Ni(E_{6}^4)$.
In addition,  we prove that the structure of a strongly regular holomorphic VOA of central charge $24$ is uniquely determined by its weight one Lie algebra if the Lie algebra has 
the type $E_{6,3}G_{2,1}^3$. 

\subsection{Simple affine VOA of type $G_{2}$ at level $1$}\label{Sec:G2}
Let $\alpha_1$ and $\alpha_2$ be simple roots of type $G_2$ such that $(\alpha_1|\alpha_1)=2/3$, $(\alpha_2|\alpha_2)=2$ and $(\alpha_1|\alpha_2)=-1$.
Let $\Lambda_1$ and $\Lambda_2$ be the fundamental weights with respect to $\alpha_1$ and $\alpha_2$, respectively.
Note that $2(\Lambda_i|\alpha_j)/(\alpha_j|\alpha_j)=\delta_{i,j}$ for $i,j\in\{1,2\}$.
Let $L_\Fg(1,0)$ be the simple affine VOA associated with the simple Lie algebra $\Fg$ of type $G_2$ at level $1$.
It is well-known (cf. \cite{FZ}) that there exist exactly two (non-isomorphic) irreducible $L_\Fg(1,0)$-modules $L_\Fg(1,0)$ and $L_\Fg(1,\Lambda_1)$.

For a dominant integral weight $\lambda$ of $\mathfrak{g}$, set $$n_{\Lambda_1}(\lambda)=\min\{(\Lambda_1|\mu)\mid \mu\in\Pi(\lambda)\}.$$
Here $\Pi(\lambda)$ denotes the set of all weights of the irreducible $\mathfrak{g}$-module with the  highest weight $\lambda$.
One can easily verify the following lemma.

\begin{lemma}\label{Lem:G21}
The values of $n_{\Lambda_1}(\lambda)$ are given as follows:
\begin{center}
\begin{tabular}{|c|c|c|c|}
\hline
Weight $\lambda$ & Lowest $L(0)$-weight of $L_\mathfrak{g}(1,\lambda)$ & $(\Lambda_1|\lambda)$ & $n_{\Lambda_1}(\lambda)$ \\
\hline \hline 
$0$ & $0$& $0$& $0$ \\\hline 
$\Lambda_1$&$2/5$&$2/3$&$-2/3$\\
\hline
\end{tabular}
\end{center}
\end{lemma}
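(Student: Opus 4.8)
The plan is to verify the two rows of the table in Lemma~\ref{Lem:G21} by direct computation for each of the two dominant integral weights $\lambda\in\{0,\Lambda_1\}$ indexing the irreducible $L_\Fg(1,0)$-modules (there are exactly two by the classification already quoted from \cite{FZ}), using the explicit root data fixed at the start of Section~\ref{Sec:G2}. The case $\lambda=0$ is immediate: $L_\Fg(1,0)$ is the vacuum module, so its lowest $L(0)$-weight is $0$, and $\Pi(0)=\{0\}$ forces $(\Lambda_1|0)=0$ and hence $n_{\Lambda_1}(0)=0$.

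For $\lambda=\Lambda_1$, first I would express the fundamental weights in terms of the simple roots. Solving $2(\Lambda_i|\alpha_j)/(\alpha_j|\alpha_j)=\delta_{i,j}$ against the given values $(\alpha_1|\alpha_1)=2/3$, $(\alpha_2|\alpha_2)=2$, $(\alpha_1|\alpha_2)=-1$ yields $\Lambda_1=2\alpha_1+\alpha_2$ and $\Lambda_2=3\alpha_1+2\alpha_2$; in particular $(\Lambda_1|\Lambda_1)=2/3$, which supplies the entry $(\Lambda_1|\lambda)=2/3$. For the lowest $L(0)$-weight of $L_\Fg(1,\Lambda_1)$ I would apply the standard conformal-weight formula $h_\lambda=(\lambda|\lambda+2\rho)/\bigl(2(k+h^\vee)\bigr)$ with $k=1$ and $h^\vee=4$ for $G_2$. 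Using $\rho=\Lambda_1+\Lambda_2=5\alpha_1+3\alpha_2$ one gets $(\Lambda_1|\alpha_1)=1/3$, $(\Lambda_1|\alpha_2)=0$, hence $(\Lambda_1|\rho)=5/3$, so $(\Lambda_1|\Lambda_1+2\rho)=2/3+10/3=4$ and $h_{\Lambda_1}=4/10=2/5$, matching the table.

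Finally, to evaluate $n_{\Lambda_1}(\Lambda_1)=\min\{(\Lambda_1|\mu)\mid\mu\in\Pi(\Lambda_1)\}$ I would use that $L_\Fg(1,\Lambda_1)$ is the $7$-dimensional fundamental representation of $G_2$, whose weight set consists of the six short roots $\pm\alpha_1,\pm(\alpha_1+\alpha_2),\pm(2\alpha_1+\alpha_2)$ together with $0$. Pairing each of these against $\Lambda_1=2\alpha_1+\alpha_2$ produces only the values $0,\pm1/3,\pm2/3$, with minimum $-2/3$ attained at $\mu=-(2\alpha_1+\alpha_2)=-\Lambda_1$. As a cleaner alternative, since $\Pi(\Lambda_1)$ lies in the convex hull of the Weyl orbit $W\Lambda_1$, the linear functional $(\Lambda_1|\cdot)$ attains its minimum over $\Pi(\Lambda_1)$ at an extreme point $w\Lambda_1$; because $-1\in W(G_2)$ the lowest weight $-\Lambda_1$ lies in that orbit, and Cauchy--Schwarz gives $\min=(\Lambda_1|-\Lambda_1)=-2/3$.

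There is no genuine obstacle here; the verification is elementary once the normalization is fixed. The only point demanding care is the bilinear-form convention: the short roots have norm $2/3$ rather than $2$, so the Cartan integers $2(\alpha_i|\alpha_j)/(\alpha_j|\alpha_j)$ (which govern both the fundamental-weight solution and the determination of $\Pi(\Lambda_1)$) must be kept carefully distinct from the raw inner products $(\alpha_i|\alpha_j)$ entering every dot product above. Keeping this distinction straight is essentially the entire content of the proof.
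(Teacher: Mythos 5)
Your proposal is correct: the paper offers no proof of this lemma (it states only that ``one can easily verify'' it), and your direct verification --- solving for $\Lambda_1=2\alpha_1+\alpha_2$, computing $(\Lambda_1|\Lambda_1)=2/3$, obtaining the conformal weight $2/5$ from $(\lambda|\lambda+2\rho)/\bigl(2(k+h^\vee)\bigr)$ with $h^\vee=4$, and minimizing $(\Lambda_1|\cdot)$ over the seven weights of the fundamental representation (or over the Weyl orbit, using $-1\in W(G_2)$) --- is exactly the routine computation the authors leave to the reader. All numerical values check out, and your closing remark about keeping the Cartan integers distinct from the raw inner products under this normalization is indeed the only point of care.
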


\subsection{Inner automorphism of a holomorphic VOA with Lie algebra $E_{6,3}G_{2,1}^3$}\label{S:E6-2}

Let $V$ be a strongly regular holomorphic VOA of central charge $24$ whose weight one Lie algebra has the type $E_{6,3}G_{2,1}^3$.
Let $V_1=\bigoplus_{i=1}^4\mathfrak{g}_i$ be the decomposition into the direct sum of simple ideals, where 
 the type of $\mathfrak{g}_1$ is $E_{6,3}$, and the types of $\mathfrak{g}_2$, $\mathfrak{g}_3$ and $\mathfrak{g}_4$ are $G_{2,1}$.
Let $\mathfrak{H}$ be a Cartan subalgebra of $V_1$.
Then $\mathfrak{H}\cap\mathfrak{g}_i$ is a Cartan subalgebra of $\mathfrak{g}_i$.
Let $U$ be the subVOA generated by $V_1$.
Set $k_1=3$ and $k_i=1$ for $i=2,3,4$.
By Proposition \ref{Prop:posl}, $U\cong \bigotimes_{i=1}^4L_{\mathfrak{g}_i}(k_i,0)$.

Let 
\begin{equation}
{h}=(0,\Lambda_1,\Lambda_1,\Lambda_1)\in\bigoplus_{i=1}^4(\mathfrak{H}\cap\mathfrak{g}_i).\label{Def:hE6}
\end{equation}
Note that 
\begin{equation}
\langle h|h\rangle=3\times(\Lambda_1|\Lambda_1)_{|\mathfrak{g}_2}=2.\label{Eq:h1}
\end{equation}
Since $(\Lambda_1|\alpha)\ge-1$ for any root of the simple Lie algebra of type $G_2$, we have 
\begin{equation}
(h|\alpha)\ge-1\label{Eq:ha}
\end{equation}
for all roots $\alpha$ of the semisimple Lie algebra $V_1$.

\begin{lemma}\label{Lem:order2} 
The order of $\sigma_h$ is $3$ on $V$, and the $L(0)$-weights of the irreducible $\sigma_h$-twisted (resp. $\sigma_h^{-1}$-twisted) $V$-module $V^{(h)}$ (resp. $V^{(-h)}$) belong to $\Z/3$.
\end{lemma}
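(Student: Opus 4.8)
The plan is to reduce the whole statement to a single observation: that every $\mathfrak{H}$-weight $\mu$ occurring in $V$ (not just in $V_1$) satisfies $(h|\mu)\in\frac13\Z$. Since the $E_6$-component of $h$ in \eqref{Def:hE6} is zero, only the three $G_2$-components of $\mu$ contribute, each through a value of $\Lambda_1$ on a $G_2$-weight. By Proposition \ref{Prop:posl}, $V$ is an integrable module for each simple ideal $\mathfrak{g}_i$, so the $\mathfrak{g}_i$-weight components of $\mu$ lie in the weight lattice of $\mathfrak{g}_i$; for the three $G_2$ factors this weight lattice coincides with the root lattice $\Z\alpha_1\oplus\Z\alpha_2$, because $G_2$ has trivial center. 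From the normalizations $(\alpha_1|\alpha_1)=2/3$, $(\alpha_2|\alpha_2)=2$, $(\alpha_1|\alpha_2)=-1$ in Section \ref{Sec:G2} one computes $(\Lambda_1|\alpha_1)=1/3$ and $(\Lambda_1|\alpha_2)=0$, whence $(\Lambda_1|\,m\alpha_1+n\alpha_2)=m/3\in\frac13\Z$. Summing over the three $G_2$-components gives $(h|\mu)\in\frac13\Z$ as claimed.

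For the order of $\sigma_h$, recall that $\sigma_h$ acts on a weight-$\mu$ vector by the scalar $\exp(-2\pi\sqrt{-1}(h|\mu))$. Since $(h|\mu)\in\frac13\Z$ for every weight $\mu$ of $V$, the element $\sigma_h^3$ acts by $\exp(-2\pi\sqrt{-1}\cdot 3(h|\mu))=1$ on all of $V$, so $\sigma_h^3=\mathrm{id}$ and the order divides $3$. To see that it equals $3$, I would exhibit a weight on which $\sigma_h$ acts nontrivially: the short simple root $\alpha_1$ of $\mathfrak{g}_2$ is an $\mathfrak{H}$-weight of $V_1\subset V$, and $(h|\alpha_1)=(\Lambda_1|\alpha_1)=1/3\notin\Z$, so $\sigma_h\neq\mathrm{id}$. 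Hence $\sigma_h$ has order exactly $3$.

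For the $L(0)$-weights of the twisted module I would simply verify the hypotheses of Lemma \ref{Lem:wtmodule} with $M=V$ and $\sigma=\mathrm{id}$, so that $V^{(h)}=M^{(h)}$ is the $\sigma_h$-twisted module of Proposition \ref{Prop:twist}. Indeed, the $L(0)$-weights of $V$ lie in $\Z\subset\frac13\Z$; we have $\langle h|h\rangle=2\in\frac23\Z$ by \eqref{Eq:h1}; $h_{(0)}$ is semisimple on $V$ since $h$ lies in the Cartan subalgebra $\mathfrak{H}$; and the spectrum of $h_{(0)}$, namely the set of values $(h|\mu)$, lies in $\frac13\Z$ by the computation above (equivalently, apply \eqref{Eq:Lh} directly). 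Lemma \ref{Lem:wtmodule} then yields that the $L(0)$-weights of $V^{(h)}$ belong to $\frac13\Z$. For $V^{(-h)}$ the identical verification applies with $-h$ in place of $h$, using $\langle -h|-h\rangle=\langle h|h\rangle=2$, $(-h|\mu)=-(h|\mu)\in\frac13\Z$, and $\sigma_{-h}=\sigma_h^{-1}$.

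The only genuinely substantive point, and the one I would be most careful to justify, is the claim that all $\mathfrak{H}$-weights of the entire VOA $V$ satisfy $(h|\mu)\in\frac13\Z$; this rests on the integrability statement of Proposition \ref{Prop:posl} together with the fact that the $G_2$ weight lattice equals its root lattice. Everything else is a direct application of \eqref{Eq:Lh} and Lemma \ref{Lem:wtmodule} combined with the explicit $G_2$ data of Section \ref{Sec:G2}.
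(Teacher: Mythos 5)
Your proof is correct and takes essentially the same approach as the paper's: both arguments rest on the key fact that all $G_2$-weights pair with $\Lambda_1$ in $\Z/3$ (yielding $\sigma_h^3=1$ together with nontriviality on $V_1$), and both then deduce the twisted-module statement from Lemma \ref{Lem:wtmodule} using $\langle h|h\rangle=2$ from \eqref{Eq:h1}. Your explicit justification of the spectrum condition via Proposition \ref{Prop:posl} and the coincidence of the $G_2$ weight and root lattices simply fills in details the paper leaves implicit.
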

\begin{proof} 
Since $(\Lambda_1|\lambda)\in\Z/3$ for any weight $\lambda$ of the simple Lie algebra of type $G_2$, we have $(\sigma_h)^3=1$.
In addition, $\sigma_h$ is an automorphism of order $3$ on $V_1$, so is it on $V$.
The latter assertion follows from Lemmas \ref{Lem:wtmodule} and \ref{Lem:G21} and \eqref{Eq:h1}.
\end{proof}

\begin{proposition}\label{Prop:low} The lowest $L(0)$-weight of the irreducible $\sigma_h$-twisted (resp. $\sigma_{h}^{-1}$-twisted) $V$-module $V^{(h)}$ (resp. $V^{(-h)}$) is $1$.
In particular, $(V^{(\pm h)})_{i}=0$ for $i\in\{1/3,2/3\}$.
\end{proposition}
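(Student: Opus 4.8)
The plan is to read off the lowest $L(0)$-weight of $V^{(h)}$ by decomposing $V$ over the subVOA $U=\bigotimes_{i=1}^{4}L_{\mathfrak{g}_i}(k_i,0)$ and applying Lemma~\ref{Lem:lowestwt} summand by summand. By Proposition~\ref{Prop:V1} the algebra $U$ is a \emph{full} subVOA of $V$, and $h$ meets the hypotheses of that lemma by \eqref{Eq:ha} and \eqref{Eq:h1}. Since $U$ is rational, $V$ is a direct sum of irreducible $U$-modules, each of the form $M=L_{\mathfrak{g}_1}(3,\lambda_1)\otimes\bigotimes_{i=2}^{4}L_{\mathfrak{g}_i}(1,\lambda_i)$ with $\lambda_1$ a level $3$ weight of $E_6$ and each $\lambda_i\in\{0,\Lambda_1\}$, the latter being the only two level $1$ weights of $G_2$. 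Writing $\ell_1$ for the conformal weight of $L_{\mathfrak{g}_1}(3,\lambda_1)$ and $n_1=\#\{\,i\in\{2,3,4\}\mid\lambda_i=\Lambda_1\,\}$, Lemma~\ref{Lem:lowestwt} together with Lemma~\ref{Lem:G21} (giving lowest weights $0,2/5$ and $n_{\Lambda_1}=0,-2/3$ for $\lambda_i=0,\Lambda_1$) and $h_1=0$ shows that the lowest $L(0)$-weight of $M^{(h)}$ equals $\ell_1-\tfrac{4}{15}n_1+1$, because each $G_2$-factor contributes $0$ or $2/5-2/3=-4/15$. Thus it suffices to prove $\ell_1\ge\tfrac{4}{15}n_1$ for every $M$ occurring in $V$.

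Next I would exploit integrality. Since $U$ is a full subVOA and $V$ is $\Z$-graded, the lowest $L(0)$-weight $\ell_1+\tfrac{2}{5}n_1$ of each $M$ is a non-negative integer; hence $\ell_1\in\tfrac15\Z$ with $\ell_1\equiv-\tfrac{2}{5}n_1\pmod 1$. The smallest non-negative value in this residue class is $0,\ 3/5,\ 1/5,\ 4/5$ for $n_1=0,1,2,3$ respectively. For $n_1\in\{0,1,3\}$ these already dominate $\tfrac{4}{15}n_1=0,\ 4/15,\ 12/15$, so $\ell_1\ge\tfrac{4}{15}n_1$ holds at once, with equality exactly for the vacuum summand ($n_1=0,\ \ell_1=0$) and in the boundary case $n_1=3,\ \ell_1=4/5$.

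The one genuinely delicate case, and the main obstacle, is $n_1=2$: there the congruence only yields $\ell_1\ge 1/5$, which is weaker than the required $\ell_1\ge 8/15$. To close this gap I would use the representation theory of $E_6$: the value $\ell_1=1/5=(\lambda_1|\lambda_1+2\rho)/30$ would force the Casimir eigenvalue $(\lambda_1|\lambda_1+2\rho)=6$, whereas the smallest nonzero Casimir eigenvalue of $E_6$, attained by the $27$-dimensional module $L_{\mathfrak{g}_1}(3,\Lambda_1)$, is $(\Lambda_1|\Lambda_1+2\rho)=52/3>6$ (equivalently, every nonzero level $3$ conformal weight is at least $26/45>1/5$). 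Hence $\ell_1=1/5$ is impossible and $\ell_1\ge 6/5>8/15$ when $n_1=2$, completing the bound $\ell_1\ge\tfrac{4}{15}n_1$. It follows that the lowest $L(0)$-weight of $V^{(h)}$ is at least $1$, and the vacuum summand $U\subset V$ realizes the value $1$, so it equals $1$. The case $V^{(-h)}$ is identical: every $G_2$-module is self-dual, so $\Pi(\lambda_i)=-\Pi(\lambda_i)$ gives $\min\{(-\Lambda_1|\mu)\mid\mu\in\Pi(\lambda_i)\}=n_{\Lambda_1}(\lambda_i)$, and $\langle-h|-h\rangle=2$. Finally the ``in particular'' assertion is immediate: by Lemma~\ref{Lem:order2} the $L(0)$-weights of $V^{(\pm h)}$ lie in $\tfrac13\Z$, and being at least $1$ they cannot be $1/3$ or $2/3$.
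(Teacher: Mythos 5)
Your proof is correct, and it shares the paper's skeleton---decompose $V$ into irreducible modules over the full subVOA $U$ and apply Lemma \ref{Lem:lowestwt} together with the $G_2$ data of Lemma \ref{Lem:G21}---but the decisive estimate is run quite differently. The paper disposes of all non-vacuum summands uniformly: since $U$ is generated by $V_1$ and $V$ is of CFT-type, $V_0\oplus V_1\subseteq U$, so every irreducible $U$-summand $M$ other than the vacuum one has integral lowest weight $\ell\geq 2$, whence $\ell^{(h)}\geq 2+3\cdot(-2/3)+1=1$ with no case distinctions. You use only the weaker fact $\ell\in\Z_{\geq 0}$, which forces your congruence analysis in $n_1$ and, in the single case where the congruence alone is insufficient ($n_1=2$, giving only $\ell_1\geq 1/5<8/15$), an extra representation-theoretic input: the minimal nonzero Casimir eigenvalue of $E_6$, namely $(\Lambda_1|\Lambda_1+2\rho)=52/3$ at the $27$-dimensional representation. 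That value is correct (it is equivalent to the level-one conformal weight $2/3$ of the $27$), its minimality follows from monotonicity of $(\lambda|\lambda+2\rho)$ in the Dynkin labels, and your arithmetic $\ell_1\geq 6/5>8/15$ then closes the gap, so the argument is complete; your self-duality observation handling $V^{(-h)}$, and the remark that $(-h|\alpha)\ge -1$ holds because roots come in pairs $\pm\alpha$, are likewise valid substitutes for the paper's ``same argument'' line. The trade-off: the paper's observation $\ell\ge 2$ is essentially free, requires no $E_6$ data beyond Lemma \ref{Lem:G21}, and is the same device reused in the analogous propositions of Sections 7 and 8 (where your congruence-plus-Casimir route would need fresh case-by-case data for $A_5$ and $D_4$); your route is heavier but shows that mere integrality of the grading of $V$, plus one Casimir bound, already suffices.
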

\begin{proof} Let $M\cong \bigotimes_{i=1}^4L_{\mathfrak{g}_i}(k_i,\lambda_i)$ be an irreducible $U$-submodule of $V$.
Let $\ell$ and $\ell^{(h)}$ be the lowest $L(0)$-weights of $M$ and of $M^{(h)}$, respectively.
By \eqref{Eq:ha}, we can apply Lemma \ref{Lem:lowestwt} to our case.
By \eqref{Eq:twisttop}, \eqref{Def:hE6} and \eqref{Eq:h1}, we have
\begin{align}
\ell^{(h)}
=\ell+\sum_{i=2}^4n_{\Lambda_1}(\lambda_i)+1,\label{Eq:lh}
\end{align}
where $n_{\Lambda_1}(\cdot)$ is defined as in the previous subsection.
If $(\lambda_1,\lambda_2,\lambda_3,\lambda_4)=(0,0,0,0)$, then $\ell=0$ and $n_{\Lambda_1}(0)=0$, and hence $\ell^{(h)}=1$.
Assume $(\lambda_1,\lambda_2,\lambda_3,\lambda_4)\neq(0,0,0,0)$.
Then $\ell\ge2$ and  $n_{\Lambda_1}(\lambda_i)\ge n_{\Lambda_1}(\Lambda_1)=-2/3$ by Lemma \ref{Lem:G21}. 
Hence 
$$\ell^{(h)}\ge 2+3n_{\Lambda_1}(\Lambda_1)+1=1.$$

One can also prove $\ell^{(-h)}\ge1$ by the same argument.
\end{proof}

\subsection{Identification of the Lie algebra: Case $E_{6,1}^4$}\label{S:E6-3}
Thanks to Proposition \ref{Prop:low}, we can apply Theorem \ref{Thm:EMS} to $V$ and $\sigma_h$, and 
obtain a strongly regular holomorphic VOA $\tilde{V}=\tilde{V}_{\sigma_h}$ of central charge $24$.
By the definition of $h$, we obtain the following:

\begin{proposition}\label{Prop:fixedE63} 
The Lie algebra structure of $(V^{\sigma_h})_1$ is $E_{6,3}A_{2,1}^3$ and $\dim(V^{\sigma_h})_1=102$.
\end{proposition}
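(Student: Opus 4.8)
The plan is to compute $(V^{\sigma_h})_1$ directly as the fixed-point subalgebra of $\sigma_h$ acting on the Lie algebra $V_1$. Since $\sigma_h$ preserves the conformal grading we have $(V^{\sigma_h})_1=V_1^{\sigma_h}$, and because $h$ lies in the Cartan subalgebra $\mathfrak{H}$ the inner automorphism $\sigma_h=\exp(-2\pi\sqrt{-1}h_{(0)})$ acts on $V_1$ as $\exp(-2\pi\sqrt{-1}\,\ad h)$: it fixes $\mathfrak{H}$ pointwise and scales each root vector $e_\alpha$ by $\exp(-2\pi\sqrt{-1}(h|\alpha))$. First I would dispose of the $E_6$ factor: by \eqref{Def:hE6} the component of $h$ in $\mathfrak{g}_1$ is zero, so $\sigma_h$ acts trivially on $\mathfrak{g}_1$, and the whole simple ideal of type $E_{6,3}$ survives unchanged in $(V^{\sigma_h})_1$.

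The main work concerns the three $G_2$ factors. On each $\mathfrak{g}_i$ with $i\in\{2,3,4\}$ the restriction of $\sigma_h$ is $\sigma_{\Lambda_1}$, and a root vector $e_\alpha$ is fixed precisely when $(\Lambda_1|\alpha)\in\Z$. Using $(\Lambda_1|\alpha_1)=1/3$ and $(\Lambda_1|\alpha_2)=0$, I would evaluate $(\Lambda_1|\alpha)$ on all twelve roots of $G_2$ and observe that the fixed root vectors are exactly those for the six long roots $\pm\alpha_2$, $\pm(3\alpha_1+\alpha_2)$, $\pm(3\alpha_1+2\alpha_2)$. Together with the two-dimensional Cartan $\mathfrak{H}\cap\mathfrak{g}_i$ these span an eight-dimensional subalgebra whose root system is the long-root $A_2$ subsystem of $G_2$; hence $\mathfrak{g}_i^{\sigma_h}$ is simple of type $A_2$.

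It remains to pin down the level and assemble the dimension count. For the level I would run the argument of Proposition \ref{Prop:con3}(2), which applies here even though $G_2$ lies outside the $A$--$D$--$E$ list, because every root of the fixed $A_2$ is a long root of $G_2$: by Proposition \ref{Prop:level} a long root $\alpha$ of $G_{2,1}$ satisfies $\langle\alpha|\alpha\rangle=(\alpha|\alpha)=2$, while the same $\alpha$ regarded as a root of $A_2$ at level $k'$ gives $\langle\alpha|\alpha\rangle=k'(\alpha|\alpha)_{A_2}=2k'$, forcing $k'=1$. Thus each $\mathfrak{g}_i^{\sigma_h}\cong A_{2,1}$, so $(V^{\sigma_h})_1\cong E_{6,3}A_{2,1}^3$ with $\dim(V^{\sigma_h})_1=78+3\cdot 8=102$. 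The only delicate point is the explicit root bookkeeping in $G_2$ and the observation that the fixed subsystem is precisely the long-root $A_2$; once that is secured, both the level and the dimension follow at once.
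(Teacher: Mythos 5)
Your proof is correct and is exactly the computation the paper intends: the paper states this proposition with no written proof, only the phrase ``By the definition of $h$,'' and your argument (trivial action on the $E_6$ factor, fixed root vectors in each $G_2$ being precisely the long-root $A_2$ subsystem since $(\Lambda_1|\alpha_1)=1/3$, $(\Lambda_1|\alpha_2)=0$, and the level-$1$ computation via Proposition \ref{Prop:level} because the fixed roots are long roots of $G_{2,1}$) is the straightforward verification being left to the reader. The dimension count $78+3\cdot 8=102$ and the conclusion $E_{6,3}A_{2,1}^3$ match the paper.
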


Next we will identify the Lie algebra structure of  $\tilde{V}_1$.

\begin{proposition}\label{Prop:E64} 
The Lie algebra structure of $\tilde{V}_1$ is $E_{6,1}^4$.
In particular, $\tilde{V}$ is isomorphic to the lattice VOA associated with the Niemeier lattice $Ni(E_6^4)$.
\end{proposition}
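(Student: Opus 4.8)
The goal is to identify the weight one Lie algebra of $\tilde V=\tilde V_{\sigma_h}$ and conclude via Proposition \ref{P:NVOA} that $\tilde V\cong V_{Ni(E_6^4)}$. First I would pin down the \emph{Lie rank} of $\tilde V_1$. Since $V^{\sigma_h}$ is a full subVOA of $\tilde V$ and $(V^{\sigma_h})_1\cong E_{6,3}A_{2,1}^3$ has Lie rank $6+3\cdot 2=12$ by Proposition \ref{Prop:fixedE63}, the Cartan subalgebra $\mathfrak H^{\sigma_h}$ of $(V^{\sigma_h})_1$ has dimension $12$. The contributions to $\tilde V_1$ coming from the twisted pieces $(V^{(\pm h)})_\Z$ must, by Proposition \ref{Prop:low}, have lowest $L(0)$-weight exactly $1$; I would argue that these extra weight one vectors enlarge the rank so that $\tilde V_1$ has Lie rank $24$. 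This is the crucial input, because once the rank is $24$, Proposition \ref{P:NVOA} immediately gives that $\tilde V$ is a Niemeier lattice VOA.

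\textbf{Computing $\dim\tilde V_1$ via the dimension formula.} The cleanest route to the Lie algebra \emph{type} is the dimension formula of Theorem \ref{Thm:Dimformula}. Here $\sigma_h$ is inner of order $3$ (Lemma \ref{Lem:order2}), so the hypotheses (i) and (ii) of Section \ref{sec:4.2} hold: indeed Proposition \ref{Prop:low} gives that the $L(0)$-weights of $V[\sigma_h^{\pm 1}]=V^{(\pm h)}$ lie in $(1/3)\Z_{>0}$, which is exactly assumption (\ref{Assumption1}), and it also gives $\dim V^{(\pm h)}_{1/3}=\dim V^{(\pm h)}_{2/3}=0$. Plugging these vanishings together with $\dim V_1=\dim(E_{6,3}G_{2,1}^3)=78+3\cdot 14=120$ and $\dim(V^{\sigma_h})_1=102$ into the formula of Theorem \ref{Thm:Dimformula}, I obtain
\begin{equation*}
\dim V_1+\dim\tilde V_1=4\dim(V^{\sigma_h})_1-0-0+24,
\end{equation*}
so $\dim\tilde V_1=4\cdot 102+24-120=312$. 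Since a Niemeier lattice VOA of central charge $24$ with $\dim(V_N)_1=312$ forces the root system to have $288$ roots of Lie rank $24$, the only possibility among the Niemeier lattices is root type $E_6^4$, giving $\tilde V_1\cong E_{6,1}^4$.

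\textbf{Pinning down the levels and type.} To be rigorous about the type I would also verify that the four $E_6$ ideals of $\tilde V_1$ sit at level $1$ rather than some higher level. By Proposition \ref{Prop:con3}(1), the simple ideal $(V^{\sigma_h})_1$ of type $E_{6,3}$ arises as a fixed-point subalgebra whose level is tripled relative to the underlying level-$1$ ideal, which is consistent with reversing the orbifold: the $\sigma_h$-orbifold of a level-$1$ $E_6$ produces level $3$, and the reverse process recovers level $1$. Combined with Proposition \ref{Prop:V1}, the identity $h^\vee/k=(\dim\tilde V_1-24)/24=(312-24)/24=12=h^\vee(E_6)$ forces $k=1$ on each $E_6$ ideal. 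Hence $\tilde V_1\cong E_{6,1}^4$, and by Proposition \ref{P:NVOA} together with the uniqueness of the Niemeier lattice with root system $E_6^4$, we conclude $\tilde V\cong V_{Ni(E_6^4)}$.

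\textbf{The main obstacle.} The delicate point is not the arithmetic but justifying that $\tilde V_1$ is \emph{semisimple of full rank $24$} rather than containing an abelian part. The dimension formula alone gives only $\dim\tilde V_1=312$; to invoke Proposition \ref{P:NVOA} I must know the Lie rank is $24$, and to read off $E_6^4$ I must know $\tilde V_1$ is semisimple. I would handle this by applying Proposition \ref{Prop:V1}: if $\tilde V_1$ were abelian it would have dimension $24\neq 312$, so $\tilde V_1$ is semisimple, and then the dual-Coxeter-number identity pins the type uniquely. The verification that the weight one vectors contributed by $(V^{(\pm h)})_\Z$ actually assemble, together with $(V^{\sigma_h})_1$, into the four full $E_6$ root systems is the genuinely orbifold-theoretic content and is where most of the work lies.
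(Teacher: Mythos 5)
There is a genuine gap in your argument, at the step where you identify the type from the dimension. You claim that $\dim\tilde V_1=312$ (equivalently, $288$ roots) ``forces'' the Niemeier root system to be $E_6^4$. This is false: the Niemeier lattice $Ni(A_{11}D_7E_6)$ also has exactly $288$ roots, since $A_{11}$, $D_7$ and $E_6$ contribute $132+84+72=288$, and correspondingly $\dim(V_{Ni(A_{11}D_7E_6)})_1=143+91+78=312$. Indeed the paper's own proof arrives at precisely these two candidates: after computing $\dim\tilde V_1=312$ (the same computation you do, and your use of Theorem \ref{Thm:Dimformula} there is correct), it uses Proposition \ref{Prop:V1} to get $h^\vee/k=12$ for every simple ideal, deduces that the possible simple ideals are $A_{11,1}$, $C_{11,1}$, $D_{7,1}$, $E_{6,1}$, and that the only dimension-$312$ combinations are $A_{11,1}D_{7,1}E_{6,1}$ and $E_{6,1}^4$. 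The decisive step you are missing is the one that eliminates $A_{11,1}D_{7,1}E_{6,1}$: since $\tilde V$ contains $V^{\sigma_h}$ as a full subVOA, $\tilde V_1$ contains a Lie subalgebra of type $E_{6,3}$; a level-$3$ simple $E_6$ cannot sit inside $A_{11,1}D_{7,1}E_{6,1}$ (the smallest faithful $E_6$-module is $27$-dimensional, so $E_6$ embeds in neither $A_{11}$ nor $D_7$, and an $E_6$ inside the $E_{6,1}$ ideal has level $1$), whereas it does occur diagonally in $E_{6,1}^3\subset E_{6,1}^4$. Your third paragraph about levels does not repair this, because it already presupposes that the simple ideals are $E_6$'s.

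A secondary problem is your opening plan: the assertion that the weight-one vectors coming from $(V^{(\pm h)})_\Z$ ``enlarge the rank so that $\tilde V_1$ has Lie rank $24$'' is never justified, and no argument is sketched that could justify it directly. In the paper's route this issue never arises: rank $24$ is not an input to the classification but an output of it, since both surviving candidates $A_{11,1}D_{7,1}E_{6,1}$ and $E_{6,1}^4$ happen to have rank $24$, so Proposition \ref{P:NVOA} applies only after the type is pinned down. (Your semisimplicity argument is also shakier than needed: Proposition \ref{Prop:V1} as stated does not say that an abelian $\tilde V_1$ has dimension $24$; the clean argument is simply that $\tilde V_1\supset(V^{\sigma_h})_1\cong E_{6,3}A_{2,1}^3$ is neither zero nor abelian, so Proposition \ref{Prop:V1} gives semisimplicity.)
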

\begin{proof} 
By Proposition \ref{Prop:V1}, the Lie algebra $\tilde{V}_1$ is semisimple.
By Proposition \ref{Prop:fixedE63}, we have $\dim (V^{\sigma_h})_1=102$, and 
by Proposition \ref{Prop:low}, we have $(V^{(\pm h)})_{i}=0$ for $i=1/3,2/3$.
By Theorem \ref{Thm:Dimformula}, $$\dim\tilde{V}_1=4\times \dim (V^{\sigma_h})_1-\dim V_1+24=312;$$
hence we obtain the ratio $h^\vee/k=12$ by Proposition \ref{Prop:V1}.
Since the level $k$ of any simple ideal is integral, the dual Coxeter number $h^\vee$ is a multiple of $12$ , which shows that possible types of simple ideals of $\tilde{V}_1$ are $A_{11,1}$, $C_{11,1}$, $D_{7,1}$ and $E_{6,1}$.
It follows from $\dim \tilde{V}_1=312$ that the Lie algebra structure of $\tilde{V}_1$ is $A_{11,1}D_{7,1}E_{6,1}$ or $E_{6,1}^4$.
Since $\tilde{V}_1$ contains a Lie subalgebra of the type $E_{6,3}$, the Lie algebra structure of $\tilde{V}_1$ must be $E_{6,1}^4$.
By Proposition \ref{P:NVOA}, we have proved this proposition.
\end{proof}

\subsection{Main theorem for the Lie algebra $E_{6,3}G_{2,1}^3$}
Set $N=Ni(E_6^4)$ and let $\sigma_6$ be the order $3$ automorphism of $V_N$ defined in Section \ref{S:E64}.
Then $\sigma_6$ satisfies Condition (I) in Section \ref{S:Orb} and the holomorphic VOA $(\widetilde{V_{N}})_{\sigma_6}$ has the weight one Lie algebra of type $E_{6,3}G_{2,1}^3$ (\cite{Mi3,SS}).
Note that the Lie algebra $(V_N^{\sigma_6})_1$ has the type $E_{6,3}A_{2,1}^3$.
We now prove the main theorem in this section.
\begin{theorem} Let $V$  be a strongly regular holomorphic VOA of central charge $24$ whose weight one Lie algebra has the type $E_{6,3}G_{2,1}^3$.
Then $V$ is isomorphic to the holomorphic VOA $(\widetilde{V_{N}})_{\sigma_6}$.
In particular, the structure of a strongly regular holomorphic VOA of central charge $24$ is uniquely determined by its weight one Lie algebra if the Lie algebra has the type $E_{6,3}G_{2,1}^3$.
\end{theorem}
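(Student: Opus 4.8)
The plan is to derive this statement as a direct application of Theorem \ref{Thm:Rev}, with the identifications $\g=E_{6,3}G_{2,1}^3$, $\mathfrak{p}=E_{6,3}A_{2,1}^3$, $n=3$, $W=V_N$ and $\varphi=\sigma_6$, and with condition (B) replaced by the weaker condition (B') of Remark \ref{R:RO} (1). It then suffices to check the two groups of hypotheses of Theorem \ref{Thm:Rev}, all of which have in fact been prepared by the preceding results.

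First I would verify hypotheses (a) and (b). Given an arbitrary strongly regular holomorphic VOA $V$ of central charge $24$ with $V_1\cong E_{6,3}G_{2,1}^3$, I take $\sigma=\sigma_h$ for the element $h=(0,\Lambda_1,\Lambda_1,\Lambda_1)$ of \eqref{Def:hE6}. By Lemma \ref{Lem:order2} the automorphism $\sigma_h$ has order $3$, and by Proposition \ref{Prop:low} the lowest $L(0)$-weights of $V^{(\pm h)}$ equal $1$, so Condition (I) holds. Proposition \ref{Prop:fixedE63} identifies $(V^{\sigma_h})_1\cong E_{6,3}A_{2,1}^3=\mathfrak{p}$, which gives (a), and Proposition \ref{Prop:E64} shows $\widetilde{V}_{\sigma_h}\cong V_N$, which gives (b).

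Next I would verify the uniqueness of the conjugacy class. Any order $3$ automorphism $\varphi$ of $V_N$ satisfying (A) and (B') has, by definition, $(V_N^{\varphi})_1\cong E_{6,3}A_{2,1}^3$ and $\dim V_N[\varphi]_1\neq 0$; these are precisely hypotheses (1) and (2) of Proposition \ref{Prop:ConjE6}, so $\varphi$ is conjugate to $\sigma_6$ in $\Aut (V_N)$. Since $\sigma_6$ itself satisfies (I), (A) and (B') (the last by \cite{Mi3,SS}), all such $\varphi$ form a single conjugacy class. The only point needing attention is that the grading automorphism $\tau$ produced inside the proof of Theorem \ref{Thm:Rev} must be seen to satisfy (B'): since $n=3$ is prime, $(\widetilde{V_N})_\tau=V_N^{\tau}\oplus V_N[\tau]_\Z\oplus V_N[\tau^2]_\Z$, and comparing weight one dimensions gives $\dim V_N[\tau]_1+\dim V_N[\tau^2]_1=\dim\g-\dim\mathfrak{p}=120-102=18\neq 0$, which forces $\dim V_N[\tau]_1\neq 0$.

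With both groups of hypotheses in hand, Theorem \ref{Thm:Rev} yields $V\cong (\widetilde{V_N})_{\sigma_6}$, and since the right-hand side does not depend on $V$, the holomorphic VOA structure with this weight one Lie algebra is unique. I expect no serious obstacle here: the substantive work, namely the lowest-weight estimate of Proposition \ref{Prop:low}, the identification of $\widetilde{V}_{\sigma_h}$ with the $Ni(E_6^4)$-lattice VOA in Proposition \ref{Prop:E64}, and the conjugacy classification of Proposition \ref{Prop:ConjE6}, is already done, so the remaining task is the bookkeeping that matches conditions (A)/(B') with hypotheses (1)/(2) together with the above dimension count that guarantees (B').
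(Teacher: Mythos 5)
Your proposal follows the paper's own proof essentially step for step: the same application of Theorem \ref{Thm:Rev} with $\g=E_{6,3}G_{2,1}^3$, $\mathfrak{p}=E_{6,3}A_{2,1}^3$, $n=3$, $W=V_{Ni(E_6^4)}$ and (B) relaxed to (B') of Remark \ref{R:RO}, the same citations of Lemma \ref{Lem:order2} and Propositions \ref{Prop:low}, \ref{Prop:fixedE63}, \ref{Prop:E64} for hypotheses (a) and (b), and Proposition \ref{Prop:ConjE6} for the uniqueness of the conjugacy class.

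The one place you go beyond bookkeeping is the verification of (B') for the grading automorphism $\tau$, and there your final inference has a small leap: from $\dim V_N[\tau]_1+\dim V_N[\tau^2]_1=18\neq 0$ you conclude $\dim V_N[\tau]_1\neq 0$, but a priori the sum could split as $0+18$. This is easy to close, in either of two ways. One is contragredient symmetry: since $V_N$ is holomorphic, the unique irreducible $\tau^{-1}$-twisted module is the contragredient of $V_N[\tau]$, so the graded dimensions agree and $\dim V_N[\tau]_1=\dim V_N[\tau^2]_1=9$. The other is what the paper's proof does with its remark that the eigenspaces of $\sigma=\sigma_h$ on $\g$ with eigenvalues $e^{\pm 2\pi\sqrt{-1}/3}$ are non-zero: under the identification $\widetilde{(V_N)}_\tau\cong V$ of Corollary \ref{C:RevO}, the spaces $V_N[\tau]_1$ and $V_N[\tau^2]_1$ are exactly these two eigenspaces, and each has dimension $9$, because each $G_{2,1}$ factor branches over its fixed $A_{2,1}$ subalgebra as $\mathbf{14}=\mathbf{8}\oplus\mathbf{3}\oplus\overline{\mathbf{3}}$, with the $\mathbf{3}$ and $\overline{\mathbf{3}}$ carrying the two non-trivial eigenvalues. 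With either patch, your argument is complete and coincides with the paper's.
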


\begin{proof} It suffices to verify the hypotheses in Section \ref{sec:4.2} for 
$\g=E_{6,3}G_{2,1}^3$, $\mathfrak{p}=E_{6,3}A_{2,1}^3$, $n=3$ and $W=V_{Ni(E_6^4)}$.
Take $h$ as in \eqref{Def:hE6} and set $\sigma=\sigma_h$.
Then by Lemma \ref{Lem:order2}, the order of $\sigma$ is $3$.
The former hypotheses (a) and (b) hold by Propositions \ref{Prop:low}, \ref{Prop:fixedE63} and \ref{Prop:E64}.
Note that the eigenspaces of $\sigma$ on $\g$ with eigenvalues $e^{\pm 2\pi\sqrt{-1}/3}$ are non-zero.
The latter hypotheses about the uniqueness of the conjugacy class follows from Proposition \ref{Prop:ConjE6}.
We remark that the assumption (2) in Proposition \ref{Prop:ConjE6} (or (B') in Remark \ref{R:RO} 
(1)) is weaker than the assumption (B). 
\end{proof}

\section{Uniqueness of a holomorphic VOA with the Lie algebra $A_{2,3}^{6}$}
In this section, we consider a  holomorphic VOA $V$ of central charge $24$ with the weight one Lie algebra $A_{2,3}^6$. By applying the $\Z_3$-orbifold construction to $V$ 
and certain inner automorphism of order $3$, we show that one can obtain a holomorphic lattice VOA of central charge $24$ associated with the Niemeier lattice $Ni(D_{4}^6)$.
In addition, the structure of a strongly regular holomorphic VOA of central charge $24$ is uniquely determined by its weight one Lie algebra if the Lie algebra has the type $A_{2,3}^6$.

\subsection{Lowest weights of irreducible modules over a simple Lie algebra of type $A_n$}
Let $\Phi$ be a root system of type $A_n$ and let $\Delta=\{\alpha_1,\dots,\alpha_n\}$ be a set of simple roots in $\Phi$ such that $(\alpha_i|\alpha_j)=2\delta_{i,j}-\delta_{|i-j|,1}$.
Let $W$ be the Weyl group of $\Phi$.
Let $\tau$ be the Dynkin diagram automorphism of $\Phi$ defined by $\tau(\alpha_i)=\alpha_{n-i+1}$, $1\le i\le n$.
Let $\gamma$ denote the $-1$-isometry of $\Phi$, that is, $\gamma(\alpha_i)=-\alpha_i$, $1\le i\le n$.
It is well-known (cf.\ \cite[Section 12.2, Exercise 13.5]{Hu}) that $\tau$ and $\gamma$ do not belong to $W$ and that $\tau\gamma\in W$.
Clearly, the order of $\tau\gamma$ is $2$.

Let $\mathfrak{g}$ be a simple Lie algebra associated with $\Phi$ and let $\lambda$ be a dominant integral weight of $\Phi$ with respect to $\Delta$.
Let $\Pi(\lambda)$ be the set of all weights of the irreducible $\mathfrak{g}$-module with highest weight $\lambda$.
Let $\prec$ be the partial order on $\R\Phi(\cong\R^n)$ such that $x\prec y$ if $y-x$ is a non-negative $\R$-linear combination of simple roots.
Note that $\mu\prec\lambda$ for any weight $\mu$ in $\Pi(\lambda)$.
The following lemma is well-known (cf.\ \cite{Hu}).

\begin{lemma} The lowest weight in $\Pi(\lambda)$ is $\tau\gamma(\lambda)$, that is, $\tau\gamma(\lambda)\prec \mu$ for any weight $\mu$ in $\Pi(\lambda)$.
\end{lemma}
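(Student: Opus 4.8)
The plan is to reduce the lemma to a statement about the longest element of the Weyl group. Recall the standard fact that the lowest weight of the irreducible $\mathfrak{g}$-module with highest weight $\lambda$ is $w_0(\lambda)$, where $w_0$ is the longest element of $W$. Granting this, it suffices to prove the identity $w_0=\tau\gamma$ on $\R\Phi$, after which the conclusion $\tau\gamma(\lambda)=w_0(\lambda)\prec\mu$ for all $\mu\in\Pi(\lambda)$ is immediate.

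First I would justify that $w_0(\lambda)$ is the $\prec$-minimal element of $\Pi(\lambda)$. Since $\Pi(\lambda)$ is $W$-stable and $\lambda\in\Pi(\lambda)$, we have $w_0(\lambda)\in\Pi(\lambda)$; and for any $\mu\in\Pi(\lambda)$ the weight $w_0(\mu)$ again lies in $\Pi(\lambda)$, so $w_0(\mu)\prec\lambda$, i.e.\ $\lambda-w_0(\mu)=\sum_i c_i\alpha_i$ with all $c_i\ge0$. Applying $w_0$ and using that $w_0$ carries the cone spanned by the simple roots into its negative (because $w_0$ sends every positive root to a negative root), we get that $w_0(\lambda)-\mu$ is a non-positive combination of the $\alpha_i$, equivalently $w_0(\lambda)\prec\mu$. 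This is exactly the minimality asserted in the lemma once $w_0$ is identified with $\tau\gamma$.

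The identification $w_0=\tau\gamma$ is where the type-$A_n$ input enters, and it is the only real step. We are given that $\tau\gamma\in W$. Because $\tau$ merely permutes the simple roots it preserves the set of positive roots $\Phi^+$, while $\gamma=-\mathrm{id}$ sends $\Phi^+$ to $\Phi^-$; hence $\tau\gamma(\Phi^+)=\tau(\Phi^-)=\Phi^-$, so $\tau\gamma$ reverses all positive roots. The longest element $w_0$ is the unique element of $W$ with this property, so $\tau\gamma=w_0$, and therefore $\tau\gamma(\lambda)=w_0(\lambda)$ is the lowest weight.

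There is no serious obstacle here: the argument is entirely structural, and the main point requiring care is the reversal identity $\tau\gamma(\Phi^+)=\Phi^-$ together with the uniqueness of $w_0$ as the positive-root-reversing element, both of which are standard facts for the root system of type $A_n$ (equivalently, the concrete identity $w_0(\alpha_i)=-\alpha_{n+1-i}$). The one thing to state explicitly is that $w_0$ negates the positive cone, since that is what converts the inequality $w_0(\mu)\prec\lambda$ into $w_0(\lambda)\prec\mu$.
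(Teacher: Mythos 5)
Your proof is correct, but it is packaged differently from the paper's. The paper never mentions the longest element: it observes directly from the definitions that $\gamma$ negates the simple roots and $\tau$ permutes them, so $x\prec y$ if and only if $\tau\gamma(y)\prec\tau\gamma(x)$; combined with the fact that $\tau\gamma\in W$ stabilizes $\Pi(\lambda)$ and that $\lambda$ is the $\prec$-maximum, the minimality of $\tau\gamma(\lambda)$ follows in one stroke (using that $\tau\gamma$ is an involution, noted just before the lemma). Your argument runs the same engine --- an order-reversing element of $W$ stabilizing $\Pi(\lambda)$ sends the highest weight to the lowest --- but you apply it to $w_0$ and then add the identification $\tau\gamma=w_0$, which requires two further standard Weyl-group facts: that $w_0$ is the \emph{unique} element of $W$ sending $\Phi^+$ to $\Phi^-$, and (implicitly, when you apply $w_0$ to the inequality $w_0(\mu)\prec\lambda$) that $w_0^2=\mathrm{id}$. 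What your route buys is the explicit and reusable statement $w_0=\tau\gamma$ (equivalently $w_0(\alpha_i)=-\alpha_{n+1-i}$ in type $A_n$), which is the standard textbook formulation; what the paper's route buys is brevity and self-containedness, since the order-reversal of $\tau\gamma$ is immediate from its definition and no characterization of the longest element is needed. Neither version has a gap.
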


The following corollary is immediate from the lemma above.

\begin{corollary}\label{Cor:LW} Let $\Lambda$ be a $\R_{\ge0}$-linear combination of fundamental weights.
Then $$\min\{(\Lambda|\mu)\mid \mu\in\Pi(\lambda)\}=(\Lambda|\tau\gamma(\lambda)).$$
\end{corollary}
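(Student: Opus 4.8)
The plan is to deduce the corollary directly from the preceding lemma together with the non-negativity of $\Lambda$ against the simple roots. First I would record that $\tau\gamma(\lambda)$ actually lies in $\Pi(\lambda)$: since $\tau\gamma\in W$ and $\Pi(\lambda)$ is stable under the Weyl group, the weight $\tau\gamma(\lambda)$ is indeed a weight of the module, so the value $(\Lambda|\tau\gamma(\lambda))$ is attained by a genuine element of $\Pi(\lambda)$ and the minimum on the right-hand side is a legitimate candidate.

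Next, for an arbitrary $\mu\in\Pi(\lambda)$, the lemma gives $\tau\gamma(\lambda)\prec\mu$, so by the definition of $\prec$ I may write $\mu-\tau\gamma(\lambda)=\sum_{i=1}^{n}c_i\alpha_i$ with all $c_i\ge0$. Pairing with $\Lambda$ then yields
$$(\Lambda|\mu)-(\Lambda|\tau\gamma(\lambda))=\sum_{i=1}^{n}c_i(\Lambda|\alpha_i).$$
Writing $\Lambda=\sum_j d_j\Lambda_j$ with $d_j\ge0$ and using that in type $A_n$ the chosen normalization gives $(\Lambda_j|\alpha_i)=\delta_{i,j}$, each coefficient $(\Lambda|\alpha_i)=d_i$ is non-negative. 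Hence every summand is non-negative, so $(\Lambda|\mu)\ge(\Lambda|\tau\gamma(\lambda))$ for all $\mu\in\Pi(\lambda)$. Combining this with the first step, the minimum is achieved precisely at $\mu=\tau\gamma(\lambda)$ and equals $(\Lambda|\tau\gamma(\lambda))$.

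I expect no real obstacle here beyond keeping the normalization of the fundamental weights straight; the whole content is that the lowest weight in the dominance order is simultaneously the minimizer of any linear functional coming from a non-negative combination of fundamental weights, which is exactly the information supplied by the lemma.
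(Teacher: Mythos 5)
Your proof is correct and matches what the paper intends: the paper declares the corollary ``immediate from the lemma above,'' and your argument simply spells out that deduction (membership $\tau\gamma(\lambda)\in\Pi(\lambda)$ via Weyl-group stability, then pairing $\mu-\tau\gamma(\lambda)=\sum_i c_i\alpha_i$, $c_i\ge 0$, against $\Lambda$ using $(\Lambda_j|\alpha_i)=\delta_{i,j}$). No gaps, and no genuinely different route from the paper's.
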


\subsection{Simple affine VOA of type $A_{2}$ at level $3$}
Let $\alpha_1,\alpha_2$ be simple roots of type $A_2$ such that $(\alpha_1|\alpha_1)=(\alpha_2|\alpha_2)=2$ and $(\alpha_1|\alpha_2)=-1$.
Let $\Lambda_1$ and $\Lambda_2$ be the fundamental weights with respect to $\alpha_1$ and $\alpha_2$, respectively.
Let $L_\Fg(3,0)$ be the simple affine VOA associated with the simple Lie algebra $\Fg$ of type $A_2$ at level $3$.
It is well-known (cf. \cite{FZ}) that there exist exactly $10$ (non-isomorphic) irreducible $L_\Fg(3,0)$-modules $L_\Fg(3,\lambda)$ with the highest weight $\lambda$, where $\lambda$ ranges over $\{0,\Lambda_i, 2\Lambda_i,3\Lambda_i,\Lambda_1+\Lambda_2,2\Lambda_1+\Lambda_2,\Lambda_1+2\Lambda_2\mid i=1,2\}.$

For a dominant integral weight $\lambda$ of $\mathfrak{g}$, set $$n_{\Lambda_1}(\lambda)=\min\{(\Lambda_1|\mu)\mid \mu\in\Pi(\lambda)\}.$$
By Corollary \ref{Cor:LW} and $\tau\gamma(\Lambda_1)=-\Lambda_2$, we have $n_{\Lambda_1}(\lambda)=(\Lambda_1|\tau\gamma(\lambda))=-(\Lambda_2|\lambda)$.

\begin{lemma}\label{Lem:A23}
The values of $n_{\Lambda_1}(\lambda)$ are given as follows:
\begin{longtable}{|c|c|c|c|}
\hline
Weight $\lambda$ & Lowest $L(0)$-weight of $L_\mathfrak{g}(3,\lambda)$ & $(\Lambda_1|\lambda)$ & $n_{\Lambda_1}(\lambda)$
\\
\hline \hline 
$0$ & $0$& $0$& $0$ \\ \hline
$\Lambda_1$ & $2/9$ & $2/3$ & $-1/3$ \\ \hline
$\Lambda_2$ & $2/9$ & $1/3$ & $-2/3$ \\ \hline
$2\Lambda_1$ & $5/9$ & $4/3$ & $-2/3$\\  \hline
$2\Lambda_2$ & $5/9$ & $2/3$ & $-4/3$\\  \hline
$\Lambda_1+\Lambda_2$ & $1/2$ & $1$ & $-1$ \\ \hline
$3\Lambda_1$ & $1$ & $2$ & $-1$ \\ \hline
$3\Lambda_2$ & $1$ & $1$ & $-2$ \\ \hline
$2\Lambda_1+\Lambda_2$ & $8/9$ & $5/3$ & $-4/3$ \\ \hline
$\Lambda_1+2\Lambda_2$ & $8/9$ & $4/3$ & $-5/3$ \\
\hline
\end{longtable}
\end{lemma}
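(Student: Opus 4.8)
The plan is to avoid enumerating the weight set $\Pi(\lambda)$ for each of the ten modules and instead exploit the identity $n_{\Lambda_1}(\lambda)=-(\Lambda_2|\lambda)$ derived just above from Corollary~\ref{Cor:LW} together with $\tau\gamma(\Lambda_1)=-\Lambda_2$. This collapses the computation of the last column into a single inner-product evaluation, and the third column $(\Lambda_1|\lambda)$ is likewise immediate once the Gram matrix of the fundamental weights is recorded.

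First I would compute the Gram matrix of $\Lambda_1,\Lambda_2$. Since $(\alpha_i|\alpha_i)=2$ and $(\alpha_1|\alpha_2)=-1$, the Cartan matrix of $A_2$ is $\left(\begin{smallmatrix}2&-1\\-1&2\end{smallmatrix}\right)$, and as $(\alpha_i|\Lambda_j)=\delta_{i,j}$ the matrix $\bigl((\Lambda_i|\Lambda_j)\bigr)$ is its inverse, namely $(\Lambda_1|\Lambda_1)=(\Lambda_2|\Lambda_2)=2/3$ and $(\Lambda_1|\Lambda_2)=1/3$. Writing $\lambda=a\Lambda_1+b\Lambda_2$, one then reads off $(\Lambda_1|\lambda)=(2a+b)/3$ and $n_{\Lambda_1}(\lambda)=-(\Lambda_2|\lambda)=-(a+2b)/3$, and substituting the ten pairs $(a,b)$ listed in the statement fills in the third and fourth columns directly.

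For the second column I would invoke the standard conformal-weight formula for an irreducible module over a simple affine VOA: the lowest $L(0)$-weight of $L_\Fg(k,\lambda)$ equals $(\lambda|\lambda+2\rho)/\bigl(2(k+h^\vee)\bigr)$, where $\rho=\Lambda_1+\Lambda_2$ is the Weyl vector. For type $A_2$ at level $k=3$ we have $h^\vee=3$, so the denominator is $12$, and $(\lambda|\lambda+2\rho)$ is again evaluated from the Gram matrix above. Substituting each $\lambda$ then reproduces the ten entries in the second column.

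Since every step is a finite, purely mechanical evaluation of a quadratic form, there is no genuine obstacle here. The only points requiring care are the normalizations—that $\bigl((\Lambda_i|\Lambda_j)\bigr)$ is the inverse Cartan matrix under the convention $(\alpha_i|\alpha_i)=2$, and that $h^\vee=3$ for $A_2$—so that the conformal weights and the values $n_{\Lambda_1}(\lambda)$ emerge with the correct denominators.
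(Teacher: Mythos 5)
Your proposal is correct and follows essentially the same route as the paper: the paper derives the identity $n_{\Lambda_1}(\lambda)=(\Lambda_1|\tau\gamma(\lambda))=-(\Lambda_2|\lambda)$ from Corollary \ref{Cor:LW} immediately before the lemma and then treats the table as a mechanical evaluation, which is exactly what you do, with the inverse Cartan matrix giving $(\Lambda_1|\lambda)=(2a+b)/3$, $n_{\Lambda_1}(\lambda)=-(a+2b)/3$ for $\lambda=a\Lambda_1+b\Lambda_2$. Your use of the standard conformal-weight formula $(\lambda|\lambda+2\rho)/\bigl(2(k+h^\vee)\bigr)$ with $h^\vee=3$, $k=3$ correctly reproduces the second column, which the paper likewise takes as standard (citing \cite{FZ}).
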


\subsection{Inner automorphism of a holomorphic VOA with Lie algebra $A_{2,3}^6$}\label{S:A23-2}

Let $V$ be a strongly regular holomorphic VOA of central charge $24$ whose weight one Lie algebra has the type $A_{2,3}^6$.
Let $V_1=\bigoplus_{i=1}^6\mathfrak{g}_i$ be the decomposition into the direct sum of simple ideals, where 
 the type of $\mathfrak{g}_i$ is $A_{2,3}$ for $1\le i\le 6$.
Let $\mathfrak{H}$ be a Cartan subalgebra of $V_1$.
Then $\mathfrak{H}\cap\mathfrak{g}_i$ is a Cartan subalgebra of $\mathfrak{g}_i$.
Let $U$ be the subVOA generated by $V_1$.
By Proposition \ref{Prop:posl}, $U\cong \bigotimes_{i=1}^6 L_{\mathfrak{g}_i}(3,0)$.

Let 
\begin{equation}
h=(\Lambda_1,0,0,0,0,0)\in \bigoplus_{i=1}^6(\mathfrak{H}\cap\mathfrak{g}_i).\label{Def:hA23}
\end{equation}
Note that
\begin{equation}
\langle h|h\rangle=3\times(\Lambda_1|\Lambda_1)_{|\mathfrak{g}_1}=2\label{Eq:h2}
\end{equation}
and that for any root $\alpha$ of $V_1$, we have
\begin{equation}
(h|\alpha)\ge-1.\label{Eq:ha2}
\end{equation}

Now, let us recall the following theorems:

\begin{theorem}{\rm (\cite[Theorem 2]{KMi})}\label{Thm:KMi}
Let $P$ be a strongly regular holomorphic VOA and let $Q$ be a simple regular subVOA of $P$.
Assume that the commutant subalgebra $Q^c$ is simple, regular, and that $(Q^c)^c=Q$.
Then all irreducible $Q$-modules appear in $P$ as $Q$-submodules.
\end{theorem}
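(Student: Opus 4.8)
The plan is to decompose $P$ as a module over the tensor-product subVOA $Q\otimes R$, where $R:=Q^c$, and then to use modular invariance to force the resulting multiplicity matrix to be a permutation matrix. Since $Q$ and $R$ are both regular, so is $Q\otimes R$, and hence $P$ decomposes into a finite direct sum of irreducible $Q\otimes R$-modules; by the classification of irreducible modules over a tensor product of rational VOAs, each summand has the form $M\otimes N$ with $M\in\irr(Q)$ and $N\in\irr(R)$. Writing $P=\bigoplus_{M,N}Z_{M,N}\,(M\otimes N)$ with $Z_{M,N}=\dim\hom_{Q\otimes R}(M\otimes N,P)\in\Z_{\ge0}$, the assertion to be proved becomes: for every $M\in\irr(Q)$ there is some $N$ with $Z_{M,N}>0$.

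The next step is to pin down the ``vacuum row and column'' of $Z$ from the two commutant hypotheses. By definition the multiplicity space of the adjoint module $Q$ inside $P$ is exactly the commutant $Q^c=R$; since $R$ is simple, hence irreducible as a module over itself, this forces $Z_{Q,N}=\delta_{N,R}$. The symmetric hypothesis $(Q^c)^c=R^c=Q$ gives $Z_{M,R}=\delta_{M,Q}$ in the same way. Thus the vacuum of $Q$ couples only to the vacuum of $R$, and vice versa.

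The heart of the argument is modular invariance. Because $P$ is holomorphic of central charge $c=c_Q+c_R$, its graded character is $\SL(2,\Z)$-invariant, while the characters of the irreducible $Q\otimes R$-modules transform through the products $S^Q\otimes S^R$ and $T^Q\otimes T^R$ of the genus-one modular representations of $\mathcal C_Q$ and $\mathcal C_R$. Expanding $Z_P=\sum_{M,N}Z_{M,N}\chi_M^Q\chi_N^R$ and using the linear independence of irreducible characters for rational VOAs, invariance translates into the statement that $Z$ intertwines the two modular representations, i.e. $Z$ is a (nonnegative integral) modular invariant. A modular invariant whose vacuum row and column are delta functions, as established above, is necessarily a permutation matrix $Z_{M,N}=\delta_{N,\phi(M)}$ for a bijection $\phi:\irr(Q)\to\irr(R)$ preserving the modular data; this follows from the standard positivity argument built on the strict positivity of the quantum dimensions $d_M^Q=S^Q_{Q,M}/S^Q_{Q,Q}$ and $d_N^R=S^R_{R,N}/S^R_{R,R}$. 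In particular $Z$ then has a nonzero entry in every row, which is exactly the claim that every irreducible $Q$-module occurs as a submodule of $P$.

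I expect the main obstacle to be this final step: upgrading ``$Z$ is a modular invariant with delta-function vacuum'' to ``$Z$ is a full permutation,'' i.e. ruling out that some irreducible $Q$-modules are missing. Conceptually this is the statement that $P$, viewed as a commutative \'etale algebra in $\mathcal C_Q\boxtimes\mathcal C_R$, is Lagrangian, so that it induces a braid-reversing equivalence $\mathcal C_Q\simeq\overline{\mathcal C_R}$ under which $P=\bigoplus_M M\otimes\phi(M)$. Making this rigorous requires the quantum-dimension identity forced by holomorphicity, namely $\sum_M (d_M^Q)^2=\dim\mathcal C_Q=\dim\mathcal C_R$ together with $\dim_{\mathcal C_Q\boxtimes\mathcal C_R}P=\sqrt{\dim\mathcal C_Q\cdot\dim\mathcal C_R}$; equivalently, one runs the Perron--Frobenius estimate on $Z$ with careful attention to positivity. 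This is precisely where the full strength of the regularity of $Q$, $R$ and the holomorphicity of $P$ enters, and it is the delicate point of the proof.
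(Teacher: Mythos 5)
First, a remark on what you are being compared against: the paper does not prove this statement at all — it quotes it as Theorem 2 of Krauel--Miyamoto [KMi15] — so the relevant comparison is with that proof. Your skeleton coincides with theirs: decompose $P$ over $Q\otimes R$ with $R=Q^c$, use the two commutant hypotheses to show the vacuum row and column of the multiplicity matrix $Z$ are delta functions (this part of your argument, via vacuum-like vectors, is correct), then play the modular transformation of the holomorphic object against that of the pieces. The genuine gap is in the step where you pass from invariance of $Z_P$ to the matrix relation ``$Z$ intertwines the two modular representations.'' You invoke ``the linear independence of irreducible characters for rational VOAs,'' and this is false: non-isomorphic irreducible modules of a rational VOA can have identical $q$-characters (a module and its contragredient, e.g.\ $V_{\lambda+L}$ and $V_{-\lambda+L}$ for a lattice VOA, or the two nontrivial level-one modules of $A_2$), so the products $\chi^Q_M\chi^R_N$ need not be linearly independent and the coefficient comparison does not go through. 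Repairing exactly this defect is the whole point of the cited paper: its Theorem 1 establishes modular invariance for \emph{multivariable} trace functions $\mathrm{tr}\,o(v_1)\cdots o(v_n)\,q^{L(0)-c/24}$, and trace functions with insertions do separate non-isomorphic irreducibles; only with these in hand does one obtain the intertwining relation your argument needs.

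Second, your final step tries to prove more than is needed, with a tool that is not available at this level of generality. Strict positivity of the quantum dimensions $S^Q_{Q,M}/S^Q_{Q,Q}$ is a unitarity-type assumption; for strongly regular VOAs quantum dimensions can be negative, so the Perron--Frobenius/Lagrangian argument that $Z$ is a permutation matrix is not justified (nor is it what [KMi15] proves). It is also unnecessary: once the relation $Z=(S^Q)^{T}ZS^{R}$ is established, multiply on the right by $(S^{R})^{-1}=\overline{S^{R}}$ and read off the column indexed by the vacuum of $R$; using $Z_{M,R}=\delta_{M,Q}$ this gives $\sum_{N}Z_{M,N}\,\overline{S^{R}_{R,N}}=S^{Q}_{Q,M}$ for every irreducible $Q$-module $M$. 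By Huang's theorem the module categories are modular tensor categories, and in a modular tensor category the vacuum row of the $S$-matrix has no zero entries, so the right-hand side is nonzero and hence every row of $Z$ has a nonzero entry — which is exactly the theorem. So the ``delicate point'' you flagged is a red herring; the missing idea is the replacement of ordinary characters by trace functions with enough variables to be linearly independent.
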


\begin{theorem}{\rm (\cite[Theorem 3.5]{HKL})}\label{Thm:HKL}
Let $W$ be a regular simple VOA of CFT-type.
Assume that for any irreducible $W$-module, its lowest $L(0)$-weight is nonnegative except for $W$ itself.
Let $Q$ be a $C_2$-cofinite, simple VOA of CFT-type containing $W$ as a full subVOA.
Then $Q$ is rational.
\end{theorem}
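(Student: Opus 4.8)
The plan is to pass from $W$ and $Q$ to the braided tensor category of $W$-modules, realize $Q$ as a commutative algebra object there, and reduce rationality of $Q$ to the semisimplicity of a module category over an \'etale algebra. First I would record that, since $W$ is regular and of CFT-type, Huang's theorems guarantee that the category $\mathcal{C}=\mathrm{Rep}(W)$ of $W$-modules is a semisimple (in fact modular) braided tensor category, with the vertex tensor product $\boxtimes$ of Huang--Lepowsky--Zhang. In particular the $C_2$-cofinite VOA $Q$, regarded as a $W$-module via the full subVOA inclusion, decomposes into a finite direct sum of irreducible $W$-modules, so $A:=Q$ is an object of $\mathcal{C}$.

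Next I would equip $A$ with the structure of a commutative associative algebra in $\mathcal{C}$. The multiplication $\mu\colon A\boxtimes A\to A$ is induced by the vertex operator $Y_Q$ through the universal property of the $P(z)$-tensor product, and the unit $\eta\colon W\to A$ is the vacuum inclusion. Weak associativity of $Y_Q$ yields associativity of $\mu$, while the locality (skew-symmetry) of vertex operators translates, via the construction of the braiding from analytic continuation, into commutativity of $A$ with respect to the braiding. The hypothesis that $Q$ is of CFT-type forces $\dim\hom_{\mathcal{C}}(W,A)=\dim Q_0=1$, so $A$ is connected (haploid); the assumption that every irreducible $W$-module other than $W$ has positive lowest $L(0)$-weight, together with the $\Z$-grading of $Q$, forces the ribbon twist $\theta$ to act as the identity on $A$. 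Hence $A$ is a \emph{connected \'etale} (commutative, separable, trivial-twist) algebra in the semisimple category $\mathcal{C}$.

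I would then invoke the general correspondence between conformal VOA extensions and algebra objects: the category $\mathrm{Rep}(Q)$ of $Q$-modules is tensor-equivalent to the category $\mathrm{Rep}^0_{\mathcal{C}}(A)$ of local $A$-modules in $\mathcal{C}$, the identification being that a $Q$-module is precisely an $A$-module in $\mathcal{C}$ whose action is single-valued (local), with induction $M\mapsto A\boxtimes M$ furnishing the comparison functor. Because $\mathcal{C}$ is semisimple and $A$ is separable, a categorical Maschke argument (Kirillov--Ostrik) shows that the category of $A$-modules, and hence its full subcategory of local modules, is again semisimple. Thus $\mathrm{Rep}(Q)$ is semisimple; combined with the given $C_2$-cofiniteness of $Q$, which supplies the finiteness needed to pass from semisimplicity of ordinary modules to semisimplicity of the admissible module category, this yields that $Q$ is rational.

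I expect the main obstacle to lie in the second and third steps rather than in the final Maschke-type conclusion: one must check carefully that $Y_Q$ genuinely descends to a morphism in $\mathcal{C}$ and that the resulting algebra is \'etale, and that induction and restriction really implement the equivalence $\mathrm{Rep}(Q)\simeq\mathrm{Rep}^0_{\mathcal{C}}(A)$. Both rely on the full Huang--Lepowsky--Zhang vertex tensor machinery and on the representation theory of algebra objects in a modular category. It is worth emphasizing that the positivity-of-weights hypothesis is exactly what secures connectedness and triviality of the twist; without it $A$ could fail to be \'etale and the semisimplicity conclusion could break down.
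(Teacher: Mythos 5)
Your proposal is correct and follows essentially the same route as the proof this paper relies on: the paper gives no argument of its own for Theorem \ref{Thm:HKL} but quotes it from \cite{HKL}, whose proof is exactly your strategy of realizing $Q$ as a haploid commutative associative algebra with trivial twist in the modular tensor category of $W$-modules, identifying the $Q$-module category with the local modules over that algebra, and invoking Kirillov--Ostrik semisimplicity (with simplicity of $Q$ supplying the nondegeneracy needed for the algebra to be rigid/separable). The only point worth making explicit is that the self-contragredience of $W$ required by Huang's rigidity and modularity theorem is automatic here, since $W'$ is an irreducible $W$-module of lowest $L(0)$-weight $0$ and hence isomorphic to $W$ by the positivity hypothesis.
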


Using the theorems above, we prove the following lemma:

\begin{lemma}\label{Lem:or3A23} 
The order of $\sigma_h$ is $3$ on $V$, and the $L(0)$-weights of the irreducible $\sigma_h$-twisted (resp. $\sigma_h^{-1}$-twisted) $V$-module $V^{(h)}$ (resp. $V^{(-h)}$) belong to $\Z/3$.
\end{lemma}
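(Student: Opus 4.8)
The plan is to establish the two assertions separately: the second (the $L(0)$-weights of the twisted modules lie in $\Z/3$) follows the template of Lemma~\ref{Lem:order2}, but the first (order exactly $3$) needs a genuinely new argument, because for this choice of $h$ the automorphism $\sigma_h$ turns out to act \emph{trivially} on $V_1$. I would begin by checking that $\sigma_h^3=1$. Since $3\Lambda_1=2\alpha_1+\alpha_2$ lies in the root lattice of the first $A_2$-factor, the pairing $(h|\mu)=(\Lambda_1|\mu)$ lies in $\frac13\Z$ for every $\mathfrak{H}$-weight $\mu$ occurring in $V$ (the remaining five factors contribute $0$, as $h$ has no component there). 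Hence $\exp(-2\pi\sqrt{-1}(3h)_{(0)})=1$ on $V$, so the order of $\sigma_h$ divides $3$.

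The harder point is that the order is exactly $3$. Observe that $(\Lambda_1|\alpha)\in\Z$ for every root $\alpha$ of $A_{2,3}^6$, so $\sigma_h$ fixes $V_1$ pointwise, and one cannot detect a non-trivial eigenvalue on $V_1$ as in the $E_{6,3}G_{2,1}^3$ case. Instead I would exhibit a non-trivial eigenvalue on a higher piece of $V$ by showing that a non-vacuum module over the first affine factor occurs in $V$. Concretely, take $Q=L_{\mathfrak{g}_1}(3,0)$, a simple regular subVOA of the strongly regular holomorphic VOA $V$, and consider its commutant $Q^c$. Here $Q^c$ contains $L_{\mathfrak{g}_2}(3,0)\otimes\cdots\otimes L_{\mathfrak{g}_6}(3,0)$ as a full subVOA, since both have central charge $20$ and, by Proposition~\ref{Prop:V1}, the conformal vector of $V$ is the sum of the Sugawara vectors of the $\mathfrak{g}_i$. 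Every non-vacuum irreducible module over this tensor product has positive lowest $L(0)$-weight by Lemma~\ref{Lem:A23}, so Theorem~\ref{Thm:HKL} yields the rationality (regularity) of $Q^c$. Granting the remaining hypotheses, Theorem~\ref{Thm:KMi} then shows that every irreducible $L_{\mathfrak{g}_1}(3,0)$-module appears in $V$; in particular $L_{\mathfrak{g}_1}(3,\Lambda_1)$ does. Its lowest $\mathfrak{g}_1$-weight is $\tau\gamma(\Lambda_1)=-\Lambda_2$ (Corollary~\ref{Cor:LW}), and $\sigma_h$ acts on a corresponding vector by $\exp(-2\pi\sqrt{-1}(\Lambda_1|-\Lambda_2))=\exp(2\pi\sqrt{-1}/3)\neq1$. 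Therefore $\sigma_h$ has order exactly $3$.

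For the claim about the twisted modules I would apply Lemma~\ref{Lem:wtmodule} with $\sigma=\mathrm{id}$ and $M=V$. Its hypotheses hold: $V$ has integral $L(0)$-weights, $\langle h|h\rangle=2\in\frac23\Z$ by \eqref{Eq:h2}, the operator $h_{(0)}$ is semisimple since $h\in\mathfrak{H}$, and by the first step the spectrum of $h_{(0)}$ on $V$ lies in $\frac13\Z$. The lemma then gives that the $L(0)$-weights of the $\sigma_h$-twisted module $V^{(h)}$ lie in $\frac13\Z$; applying the same argument to $-h$ (noting $\sigma_{-h}=\sigma_h^{-1}$ and $\langle -h|-h\rangle=2$) handles $V^{(-h)}$.

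The main obstacle will be verifying the hypotheses of Theorem~\ref{Thm:KMi} for the commutant $Q^c$, namely its simplicity, regularity, and the double-commutant identity $(Q^c)^c=Q$. This is the truly new ingredient here: in the $E_{6,3}G_{2,1}^3$ case the order of $\sigma_h$ could be read off directly from its action on $V_1$, whereas in the present case $\sigma_h$ acts trivially on $V_1$ and one must descend to the module structure of $V$ over the affine factors to see that the order is $3$.
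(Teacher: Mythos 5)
Your strategy coincides, step for step, with the paper's own proof: the order of $\sigma_h$ divides $3$ because every $\mathfrak{H}$-weight of $V$ pairs with $h=\Lambda_1$ in $\frac13\Z$; non-triviality is detected by passing to the commutant $Q^c$ of $Q=\langle\mathfrak{g}_1\rangle_{\mathrm{VOA}}\cong L_{\mathfrak{g}_1}(3,0)$ and invoking Theorems \ref{Thm:HKL} and \ref{Thm:KMi} to force a non-vacuum irreducible $Q$-module into $V$; and the assertion on the twisted-module weights follows from Lemma \ref{Lem:wtmodule} together with $\langle h|h\rangle=2$ and the spectrum of $h_{(0)}$ lying in $\frac13\Z$. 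Your closing eigenvalue computation with $L_{\mathfrak{g}_1}(3,\Lambda_1)$ is also correct (indeed any weight of that module works, since all its weights are congruent to $\Lambda_1$ modulo the root lattice and $(\Lambda_1|\Lambda_1)=2/3$).

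However, the steps you explicitly defer --- ``granting the remaining hypotheses'' of Theorem \ref{Thm:KMi}, listed at the end as simplicity and regularity of $Q^c$ and the identity $(Q^c)^c=Q$ --- are not side conditions but the actual mathematical content of the paper's argument, so as written your proof has a genuine gap there. The paper closes it as follows: $(Q^c)^c=Q$ holds because the lowest $L(0)$-weight of every irreducible $Q$-module is less than $2$, which is read off from the table in Lemma \ref{Lem:A23} (this is the one step that uses the specific numerics of $A_{2,3}$); $Q^c$ is $C_2$-cofinite by \cite{ABD} because it is an extension of the regular VOA $W=\langle \mathfrak{g}_i\mid 2\le i\le 6\rangle_{\mathrm{VOA}}$ sitting inside it as a full subVOA (your central-charge argument via Proposition \ref{Prop:V1} is the right justification for fullness); and $Q^c$ is simple by \cite[Lemma 2.1]{ACKL}. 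Only with these facts in hand does Theorem \ref{Thm:HKL} yield rationality of $Q^c$, after which \cite{ABD} upgrades rational plus $C_2$-cofinite to regular, so that Theorem \ref{Thm:KMi} finally applies. Note in particular that Theorem \ref{Thm:HKL} itself requires the ambient VOA ($Q^c$ here) to be simple and $C_2$-cofinite, so even your intermediate claim that ``HKL yields the rationality of $Q^c$'' is not available before those two properties are established.
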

\begin{proof}
Let $M\cong \bigotimes_{i=1}^6 L_{\mathfrak{g}_i}(3,\lambda_i)$ be an irreducible $U$-submodule of $V$.
By Lemma \ref{Lem:A23}, $(h|(\lambda_1,\dots,\lambda_6))\in\Z/3$.
Hence the order of $\sigma_h$ is $1$ or $3$ on $V$.

Let $Q=\langle\mathfrak{g}_1\rangle_{\rm VOA}$ be the subVOA of $V$ generated by $\mathfrak{g}_1$ and let $Q^c$ be the commutant subalgebra of $Q$ in $V$.
Since the lowest $L(0)$-weight of any irreducible $Q$-module is less than $2$ (see Lemma \ref{Lem:A23}), we have $(Q^c)^c=Q$.
Let $W=\langle \mathfrak{g}_i\mid 2\le i\le 6\rangle_{\rm VOA}$ be the the subVOA of $V$ generated by $\bigoplus_{i=2}^6\mathfrak{g}_i$.
Note that $W$ is regular, and the $L(0)$-lowest weight of any irreducible $W$-module is positive except for $W$ itself.
Since $Q^c$ is an extension of $W$, $Q^c$ is $C_2$-cofinite by \cite{ABD}.
In addition, $Q^c$ is simple (see \cite[Lemma 2.1]{ACKL}).
Then by Theorem \ref{Thm:HKL}, $Q^c$ is rational, and by \cite{ABD}, $Q^c$ is regular.
Applying Theorem \ref{Thm:KMi} to $Q$, we know that every irreducible $Q$-submodule appears in $V$.
Hence by Lemma \ref{Lem:A23}, there exists a $Q\otimes W$-submodule with highest weight $(\lambda_1,\dots,\lambda_6)$ such that $(h|(\lambda_1,\dots,\lambda_6))\notin \Z$, which shows that $\sigma_h\neq1$ on $V$.

The latter assertion follows from Lemmas \ref{Lem:wtmodule} and \ref{Lem:A23} and \eqref{Eq:h2}. 
\end{proof}

\begin{proposition}\label{Prop:lowA23} The lowest $L(0)$-weight of the irreducible $\sigma_h$-twisted (resp. $\sigma_{h}^{-1}$-twisted) $V$-module $V^{(h)}$ (resp. $V^{(-h)}$) is $1$.
In particular, $(V^{(\pm h)})_{i}=0$ for $i\in\{1/3,2/3\}$.
\end{proposition}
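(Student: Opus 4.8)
The plan is to follow the proof of Proposition \ref{Prop:low} almost verbatim, replacing the numerical input from the $G_2$ computation by the $A_2$ table of Lemma \ref{Lem:A23}. First I would decompose $V$ into irreducible $U$-modules and reduce the claim to a lower bound on the lowest $L(0)$-weight of each $\Delta$-twisted summand. Let $M\cong\bigotimes_{i=1}^6 L_{\mathfrak{g}_i}(3,\lambda_i)$ be an irreducible $U$-submodule of $V$, and write $\ell$ and $\ell^{(h)}$ for the lowest $L(0)$-weights of $M$ and of $M^{(h)}$. Since the lowest $L(0)$-weight of $V^{(h)}$ is the minimum of the $\ell^{(h)}$ over all such $M$, and the summand $M=U$ already realizes the value $1$, it suffices to prove $\ell^{(h)}\ge 1$ for every $M$ and $\ell^{(h)}=1$ for $M=U$.

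By \eqref{Eq:ha2} the hypothesis $(h|\alpha)\ge-1$ of Lemma \ref{Lem:lowestwt} is satisfied, so I may use \eqref{Eq:twisttop}. Because $h=(\Lambda_1,0,0,0,0,0)$ has only its first component nonzero (see \eqref{Def:hA23}), the sum $\sum_{i}\min\{(h_i|\mu)\mid\mu\in\Pi(\lambda_i)\}$ collapses to the single term $n_{\Lambda_1}(\lambda_1)$, and combined with $\langle h|h\rangle/2=1$ from \eqref{Eq:h2} this yields the clean formula $\ell^{(h)}=\ell+n_{\Lambda_1}(\lambda_1)+1$. For $M=U$ one has $\ell=0$ and $n_{\Lambda_1}(0)=0$, so $\ell^{(h)}=1$.

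For the remaining modules the decisive point is the inequality $\ell\ge2$ whenever $(\lambda_1,\dots,\lambda_6)\neq(0,\dots,0)$. This is the step I would justify most carefully: it rests on $V$ being of CFT-type with $\Z_{\ge0}$-grading, on $V_0=\C\vacuum$, and on $V_1=U_1$ (since $U$ is generated in weight one). Consequently any irreducible $U$-submodule $M\neq U$ satisfies $M_0=M_1=0$, which forces its integral lowest weight to obey $\ell\ge2$. Feeding in the table of Lemma \ref{Lem:A23}, whose smallest entry is $n_{\Lambda_1}(3\Lambda_2)=-2$, I get $\ell^{(h)}\ge 2+(-2)+1=1$, exactly as in the $G_2$ case. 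Hence $\ell^{(h)}\ge1$ always, the lowest $L(0)$-weight of $V^{(h)}$ is $1$, and since Lemma \ref{Lem:or3A23} puts every weight of $V^{(h)}$ in $\Z/3$, none lies in $\{1/3,2/3\}$.

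The $\sigma_h^{-1}$-twisted module $V^{(-h)}$ is treated symmetrically: the values $(-\Lambda_1|\alpha)$ again lie in $\{-1,0,1\}$, so Lemma \ref{Lem:lowestwt} applies to $-h$, and there $\min\{(-\Lambda_1|\mu)\mid\mu\in\Pi(\lambda_1)\}=-(\Lambda_1|\lambda_1)\ge-2$, the value $-2$ being attained at $\lambda_1=3\Lambda_1$ where the $(\Lambda_1|\lambda)$ column of Lemma \ref{Lem:A23} is maximal; the identical estimate $\ell^{(-h)}\ge1$ results. The main obstacle is therefore not any hard estimate but the bookkeeping in this reduction---correctly justifying the $\ell\ge2$ gap and matching the single nonzero slot of $h$ against the correct extremal weight ($3\Lambda_2$ for $h$, $3\Lambda_1$ for $-h$), so that the $-2$ from the table is cancelled by the $\ell\ge2$ gap, leaving the $+1$ coming from $\langle h|h\rangle/2$.
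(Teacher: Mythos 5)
Your proof is correct and takes essentially the same route as the paper's: decompose $V$ into irreducible $U$-submodules, apply Lemma \ref{Lem:lowestwt} (valid by \eqref{Eq:ha2}) to get $\ell^{(h)}=\ell+n_{\Lambda_1}(\lambda_1)+1$, and combine the gap $\ell\ge2$ for nontrivial summands with the table bound $n_{\Lambda_1}(\lambda_1)\ge-2$, arguing symmetrically for $-h$. The only difference is that you make explicit two points the paper leaves implicit --- the justification of $\ell\ge2$ via $V_0=\C\1$ and $V_1=U_1$, and the identification of the extremal weights $3\Lambda_2$ (for $h$) and $3\Lambda_1$ (for $-h$) --- both of which are correct.
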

\begin{proof} Let $M\cong \bigotimes_{i=1}^6 L_{\mathfrak{g}_i}(3,\lambda_i)$ be an irreducible $U$-submodule of $V$.
Let $\ell$ and $\ell^{(h)}$ be the lowest $L(0)$-weights of $M$ and of $M^{(h)}$, respectively.
By \eqref{Eq:ha2}, we can apply Lemma \ref{Lem:lowestwt} to our case.
By \eqref{Eq:twisttop}, \eqref{Def:hA23} and \eqref{Eq:h2}, we have
\begin{align*}
\ell^{(h)}=\ell+n_{\Lambda_1}(\lambda_1)+1,
\end{align*}
where $n_{\Lambda_1}(\Lambda)$ is defined in the previous subsection.
If $\lambda_i=0$ for $1\le i\le 6$, then $\ell=0$ and $\ell^{(h)}=1$.
If $\lambda_i\neq0$ for some $i$, then $\ell\ge2$ and, by Lemma \ref{Lem:A23} $n_{\Lambda_1}(\lambda_1)\ge -2$.
Hence $\ell^{(h)}\ge1$.

One can also prove $\ell^{(-h)}\ge1$ by the same argument.
\end{proof}

\subsection{Identification of the Lie algebra: Case $D_{4,1}^6$}\label{S:A23-3}
Thanks to Proposition \ref{Prop:lowA23}, we can apply Theorem \ref{Thm:EMS}  to $V$ and 
$\sigma_h$, and obtain the strongly regular 
holomorphic VOA $\tilde{V}=\tilde{V}_{\sigma_h}$ of central charge $24$. 
Since $\sigma_h$ acts trivially on $V_1$, we have $(V^{\sigma_h})_1= V_1$.

\begin{lemma}\label{Lem:fixA23} 
The Lie algebra structure of $(V^{\sigma_h})_1$ is $A_{2,3}^6$ and $\dim(V^{\sigma_h})_1=48$.
\end{lemma}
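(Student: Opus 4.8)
The plan is to reduce everything to the action of the inner automorphism $\sigma_h$ on the weight one Lie algebra, since $(V^{\sigma_h})_1$ is by definition the $\sigma_h$-fixed subspace of $V_1$. Recall that $\sigma_h$ scales a vector of weight $\lambda\in\mathfrak{H}$ by $\exp(-2\pi\sqrt{-1}(h|\lambda))$. So the first step is to list the weights occurring in $V_1=\bigoplus_{i=1}^6\mathfrak{g}_i$: these are the roots of the six $A_2$-type ideals together with the weight $0$ on the Cartan subalgebra $\mathfrak{H}$.

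Next I would check that $\sigma_h$ fixes each of these weight spaces. By \eqref{Def:hA23} the element $h$ is supported on $\mathfrak{g}_1$, where it equals the fundamental weight $\Lambda_1$; hence $(h|\alpha)=0$ for every root $\alpha$ of $\mathfrak{g}_i$ with $i\geq 2$, and also on the Cartan directions the weight is $0$. Only the roots of $\mathfrak{g}_1$ require attention, and since every root $\alpha$ of the $A_2$-type algebra $\mathfrak{g}_1$ is long of norm $2$, we get $(\Lambda_1|\alpha)=2(\Lambda_1|\alpha)/(\alpha|\alpha)\in\Z$, so $\exp(-2\pi\sqrt{-1}(h|\alpha))=1$. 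Therefore $\sigma_h$ acts as the identity on all of $V_1$, which is exactly the triviality asserted just before the lemma.

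It follows that $(V^{\sigma_h})_1=V_1$ as Lie algebras. Because $V^{\sigma_h}$ is a full subVOA of $V$, the normalized invariant form $\langle\cdot|\cdot\rangle$, and hence the level of each simple ideal by Proposition \ref{Prop:level}, is unchanged; thus the type is $A_{2,3}^6$ exactly as in $V_1$. Finally $\dim (V^{\sigma_h})_1=6\dim A_2=6\times 8=48$.

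I expect this to be the easiest of the three identification statements. Unlike the $E_{6,3}G_{2,1}^3$ case in Proposition \ref{Prop:fixedE63}, where $\sigma_h$ genuinely folds each $G_2$ down to $A_2$, the whole content here is the integrality of the pairing $(\Lambda_1|\alpha)$, which forces $\sigma_h$ to be trivial on $V_1$ while still acting nontrivially on $V$ through the modules of non-integral conformal weight (cf.\ Lemma \ref{Lem:or3A23}). There is accordingly no genuine obstacle; the only point worth keeping explicit is the preservation of the level, which is immediate from $V^{\sigma_h}$ being a full subVOA.
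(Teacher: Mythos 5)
Your proof is correct and is essentially the paper's own argument: immediately before the lemma the paper simply observes that $\sigma_h$ acts trivially on $V_1$ (because $h=(\Lambda_1,0,\dots,0)$ pairs integrally with every root of $V_1$), so $(V^{\sigma_h})_1=V_1\cong A_{2,3}^6$ with $\dim(V^{\sigma_h})_1=6\cdot 8=48$. The details you supply --- the weight-by-weight check of triviality and the remark that the level is unchanged since $V^{\sigma_h}$ is a full subVOA sharing the normalized invariant form --- are exactly the implicit content of that one-line justification.
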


Next we will identify the Lie algebra structure of  $\tilde{V}_1$.

\begin{proposition}\label{Prop:D46} 
The Lie algebra structure of $\tilde{V}_1$ is $D_{4,1}^6$.
In particular, $\tilde{V}$ is isomorphic to the lattice VOA associated with the Niemeier lattice $Ni(D_4^6)$.
\end{proposition}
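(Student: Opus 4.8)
The plan is to mimic the identification of $\tilde V_1$ carried out for the $E_{6,3}G_{2,1}^3$ case in Proposition \ref{Prop:E64}, but with an extra step to separate a competing candidate. First I would record the relevant invariants of $\tilde V=\tilde V_{\sigma_h}$. Since $\sigma_h$ acts trivially on $V_1$, the subalgebra $(V^{\sigma_h})_1\cong A_{2,3}^6$ of dimension $48$ sits inside $\tilde V_1$; in particular $\tilde V_1$ is non-abelian, so by Proposition \ref{Prop:V1} it is semisimple and every simple ideal satisfies $h^\vee/k=(\dim\tilde V_1-24)/24$. By Proposition \ref{Prop:lowA23} the spaces $(V^{(\pm h)})_{1/3}$ and $(V^{(\pm h)})_{2/3}$ vanish, so Theorem \ref{Thm:Dimformula} gives $\dim\tilde V_1=4\dim(V^{\sigma_h})_1-\dim V_1+24=4\cdot 48-48+24=168$, whence the common ratio is $h^\vee/k=6$.

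Next I would enumerate the semisimple Lie algebras compatible with these two constraints. The simple types with $h^\vee/k=6$ and dimension at most $168$ are $A_{5,1}$, $C_{5,1}$, $D_{4,1}$, $E_{6,2}$, $D_{7,2}$, $A_{11,2}$ and $E_{7,3}$; searching for tuples whose dimensions add up to $168$ leaves exactly four candidates: $D_{4,1}^6$, $A_{5,1}^4D_{4,1}$, $E_{7,3}A_{5,1}$ and $E_{6,2}C_{5,1}A_{5,1}$. This is the analogue of the two-candidate list appearing in the proof of Proposition \ref{Prop:E64}.

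The decisive step is to rule out the last three using the fact that $\tilde V_1$ contains $A_{2,3}^6$, i.e.\ six commuting copies of $A_2$ at level $3$. The cleanest way is to bring in the reverse-orbifold automorphism: by Corollary \ref{C:RevO} there is an order $3$ automorphism $\tau$ of $\tilde V$ with $\tilde V^\tau=V^{\sigma_h}$, so $(\tilde V_1)^\tau=(V^{\sigma_h})_1\cong A_{2,3}^6$ by Lemma \ref{Lem:fixA23}. I would then apply Proposition \ref{Prop:con3} to the pair $(\tilde V,\tau)$. Since $(\tilde V_1)^\tau$ is semisimple with no abelian part and every factor is of type $A_{2,3}$, no $\tau$-orbit of length $3$ can occur (it would contribute a factor $\cong\mathfrak{g}_i$ at level $3k_i$, forcing $A_{2,1}$, which has ratio $3\neq 6$), so $\tau$ fixes every simple ideal. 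For each fixed ideal the fixed-point subalgebra must itself be a sum of $A_{2,3}$'s, and by Proposition \ref{Prop:con3}(2)--(3) this is impossible for an inner restriction within the allowed ratio-$6$ types (the fixed points would either have the wrong level, carry a central torus, or have odd rank, as for $E_{7,3}$), while an outer restriction forces the ideal to be $D_4$ with fixed points $A_{2,3k_i}$ or $G_{2,k_i}$; matching $A_{2,3}$ pins down $D_{4,1}$. Hence each of the six $A_{2,3}$ factors comes from a distinct $\tau$-fixed $D_{4,1}$ ideal, so $\tilde V_1\supseteq D_{4,1}^6$, and comparing dimensions ($\dim D_{4,1}^6=168=\dim\tilde V_1$) yields $\tilde V_1\cong D_{4,1}^6$.

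I expect the main obstacle to be precisely this elimination: unlike the $E_{6,3}G_{2,1}^3$ case, there is a second rank-$24$ Niemeier candidate, namely the one with $\tilde V_1\cong A_{5,1}^4D_{4,1}$ realized by $V_{Ni(A_5^4D_4)}$, which matches both the dimension $168$ and the ratio $6$. The level-$3$ structure of the $A_2$ factors is what distinguishes them: an $A_2$ at level $3$ can only be produced by a triality-type automorphism of a $D_{4,1}$ factor, not from the $A_{5,1}$ factors (where the sub-$A_2$'s available sit at level $1$ or $5$) nor from a permutation $3$-cycle (which would require level-$1$ $A_2$ ideals). Finally, since $\tilde V_1\cong D_{4,1}^6$ has rank $24$, Proposition \ref{P:NVOA} shows $\tilde V$ is a Niemeier lattice VOA, and its weight one Lie algebra identifies it as $V_{Ni(D_4^6)}$, completing the proof.
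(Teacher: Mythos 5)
Your proposal is correct and follows essentially the same route as the paper: the dimension formula gives $\dim\tilde{V}_1=168$ and the ratio $h^\vee/k=6$, the candidate list $D_{4,1}^6$, $A_{5,1}^4D_{4,1}$, $A_{5,1}E_{7,3}$, $A_{5,1}C_{5,1}E_{6,2}$ is enumerated, and the order-$3$ automorphism coming from the $\Z_3$-grading (equivalently Corollary \ref{C:RevO}), whose fixed-point subalgebra is $A_{2,3}^6$, eliminates all but $D_{4,1}^6$ via the fact that only $D_4$ admits non-inner order-$3$ automorphisms, finishing with Proposition \ref{P:NVOA}. Your careful elimination using Proposition \ref{Prop:con3} (level $3$ forcing triality on a $D_{4,1}$ ideal, ruling out permutation orbits and inner restrictions) is exactly the content the paper compresses into ``one can easily see.''
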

\begin{proof} 
By Proposition \ref{Prop:V1}, the Lie algebra $\tilde{V}_1$ is semisimple.
By Proposition \ref{Lem:fixA23}, we have $\dim (V^{\sigma_h})_1=48$, and 
by Proposition \ref{Prop:lowA23}, we have $(V^{(\pm h)})_{i}=0$ for $i=1/3,2/3$.
By Theorem \ref{Thm:Dimformula}, $$\dim\tilde{V}_1=4\times \dim (V^{\sigma_h})_1-\dim V_1+24=168;$$
hence we obtain the ratio $h^\vee/k=6$ by Proposition \ref{Prop:V1}.
It also follows from Proposition \ref{Prop:V1} that $\tilde{V}_1$ is semisimple.
Since the level $k$ of any simple ideal of $\tilde{V}_1$ is integral, its dual Coxeter number $h^\vee$ is a multiple of $6$, which shows that the  possible types of simple ideals of $\tilde{V}_1$ are $A_{5,1}$, $A_{11,1}$, $C_{5,1}$, $D_{4,1}$, $D_{7,2}$, $E_{6,2}$ and $E_{7,3}$.
It follows from $\dim \tilde{V}_1=168$ that  $\tilde{V}_1$ has the type $A_{5,1}^4D_{4,1}$, $D_{4,1}^6$, $A_{5,1}E_{7,3}$ or $A_{5,1}C_{5,2}E_{6,2}$.
Since $\tilde{V}$ is a simple current extension of $V^{\sigma_h}$ graded by $\Z_3$,
there exists an order $3$ automorphism of $\tilde{V}_1$ of which the fixed-point subspace has the Lie algebra structure $A_{2,3}^6$.
Notice that a simple Lie algebra with non-inner order $3$ automorphisms is of type $D_4$.
One can easily see that the only possible type for $\tilde{V}_1$ is $D_{4,1}^6$.
By Proposition \ref{P:NVOA}, we have proved this proposition.
\end{proof}

\subsection{Main theorem for the Lie algebra $A_{2,3}^6$}
Set $N=Ni(D_4^6)$ and let $\sigma_2$ be the order $3$ automorphism of $V_N$ defined in Section \ref{S:D46}.
Then $\sigma_2$ satisfies Condition (I) in Section \ref{S:Orb} and the holomorphic VOA $(\widetilde{V_{N}})_{\sigma_2}$ has the weight one Lie algebra of type $A_{2,3}^6$ (\cite{SS}).
Note that the Lie algebra $(V_N^{\sigma_2})_1$ has also the type $A_{2,3}^6$.
We now prove the main theorem in this section.
\begin{theorem} Let $V$ be a strongly regular holomorphic VOA of central charge $24$ such that the Lie algebra structure of $V_1$ is $A_{2,3}^6$.
Then $V$ is isomorphic to the holomorphic VOA $(\widetilde{V_{N}})_{\sigma_2}$.
In particular, the structure of a strongly regular holomorphic vertex operator algebra of central charge $24$ is uniquely determined by its weight one Lie algebra if the Lie algebra has the type $A_{2,3}^6$.
\end{theorem}

\begin{proof} 
It suffices to verify the hypotheses in Section \ref{sec:4.2} for 
$\g=\mathfrak{p}=A_{2,3}^6$, $n=3$ and $W=V_{Ni(D_4^6)}$.
Take $h$ as in \eqref{Def:hA23} and set $\sigma=\sigma_h$.
Then by Lemma \ref{Lem:or3A23}, the order of $\sigma$ is $3$ on $V$.
The former hypotheses (a) and (b) hold by Lemma \ref{Lem:fixA23} and Propositions \ref{Prop:lowA23} and \ref{Prop:D46}.
The latter hypothesis about the uniqueness of the conjugacy class follows from Proposition \ref{Prop:ConjD4} (1).
\end{proof}

\section{Uniqueness of a holomorphic VOA with the Lie algebra $A_{5,3}D_{4,3}A_{1,1}^3$}
In this section, we consider a holomorphic VOA $V$ of central charge $24$ whose weight one Lie algebra has the type $A_{5,3}D_{4,3}A_{1,1}^3$. By applying the $\Z_3$-orbifold construction to $V$ and certain inner automorphism of order $3$, we show that one can obtain a holomorphic lattice VOA of central charge $24$ associated with the Niemeier lattice $Ni(D_4^6)$.
In addition, the structure of a strongly regular holomorphic VOA of central charge $24$ is uniquely determined by its weight one Lie algebra if the Lie algebra has the type $A_{5,3}D_{4,3}A_{1,1}^3$.

\subsection{Simple affine VOA of type $A_{1}$ at level $1$}\label{Sec:A11}
Let $\alpha_1$ be a simple root of type $A_1$ such that $(\alpha_1|\alpha_1)=2$.
Then the fundamental weight is $\Lambda_1=\alpha_1/2$.
Let $L_\Fg(1,0)$ be the simple affine VOA associated with the simple Lie algebra $\Fg$ of type $A_1$ at level $1$.
There exist exactly $2$ (non-isomorphic) irreducible $L_\Fg(1,0)$-modules,  $L_\Fg(1,0)$ and $L_\Fg(1,\Lambda_1)$. 

\begin{lemma}\label{Lem:A11} 
The lowest $L(0)$-weights of irreducible $L_\Fg(1,0)$-modules are given as follows:
\begin{table}[bht]
\begin{tabular}{|c|c|}
\hline
Weight $\lambda$& Lowest $L(0)$-weight of $L_{\mathfrak{g}}(1,\lambda)$ \\ \hline
$0$ & $0$\\\hline
$\Lambda_1$& $1/4$ \\ \hline
\end{tabular}
\end{table}
\end{lemma}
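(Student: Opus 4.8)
The statement is a standard computation of conformal weights, so there is essentially no obstacle; the plan is simply to invoke the well-known formula for the lowest $L(0)$-weight (the conformal weight) of the irreducible highest-weight module $L_{\Fg}(k,\lambda)$ over a simple affine VOA, namely
$$h_\lambda=\frac{(\lambda|\lambda+2\rho)}{2(k+h^\vee)},$$
where $\rho$ is the Weyl vector and $h^\vee$ is the dual Coxeter number of $\Fg$. For $\Fg$ of type $A_1$ one has $h^\vee=2$ and $\rho=\Lambda_1$, and, as recalled just before the lemma, at level $k=1$ there are exactly the two modules with highest weights $\lambda=0$ and $\lambda=\Lambda_1$.

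For $\lambda=0$ the formula gives $h_0=0$ directly. For $\lambda=\Lambda_1=\alpha_1/2$, the fixed normalization $(\alpha_1|\alpha_1)=2$ yields $(\Lambda_1|\Lambda_1)=1/2$, whence $(\Lambda_1|\Lambda_1+2\rho)=(\Lambda_1|3\Lambda_1)=3/2$ and
$$h_{\Lambda_1}=\frac{3/2}{2(1+2)}=\frac14,$$
which agrees with the table. Thus the entire content of the lemma reduces to these two substitutions.

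Alternatively, I would realize $L_{\Fg}(1,0)$ as the lattice VOA $V_{A_1}$ associated with the $A_1$ root lattice (with $(\alpha_1|\alpha_1)=2$), which is the standard identification for $\mathfrak{sl}_2$ at level $1$. Its two irreducible modules then correspond to the cosets of $A_1^*/A_1\cong\Z_2$, represented by $0$ and $\Lambda_1$, and the conformal weight of the module $V_{\gamma+A_1}$ is $(\gamma|\gamma)/2$; this again gives $0$ and $(\Lambda_1|\Lambda_1)/2=1/4$, respectively. The only point requiring care is the normalization of the bilinear form, but the paper has already fixed this by declaring $(\alpha_1|\alpha_1)=2$, so that $(\Lambda_1|\Lambda_1)=1/2$ and both approaches deliver $1/4$.
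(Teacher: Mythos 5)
Your proposal is correct: the paper states this lemma without proof, treating it as the standard fact that the conformal weight of $L_{\Fg}(k,\lambda)$ is $(\lambda|\lambda+2\rho)/2(k+h^\vee)$, and your substitutions ($h^\vee=2$, $\rho=\Lambda_1$, $(\Lambda_1|\Lambda_1)=1/2$ in the normalization $(\alpha_1|\alpha_1)=2$) yield exactly the table entries $0$ and $1/4$. Your alternative verification via the lattice VOA $V_{A_1}$ and the coset $\Lambda_1+A_1$ of norm $1/2$ is also a valid consistency check, so the proposal fully supplies the routine computation the paper leaves implicit.
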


\subsection{Simple affine VOA of type $A_{5}$ at level $3$}
Let $\alpha_1,\alpha_2,\dots,\alpha_5$ be simple roots of type $A_5$ such that $(\alpha_i|\alpha_j)=2\delta_{i,j}-\delta_{|i-j|,1}$ for $1\le i,j\le5$.
Let $\Lambda_1,\Lambda_2,\dots,\Lambda_5$ be the fundamental weights such that $(\Lambda_i|\alpha_j)=\delta_{i,j}$, $1\le i,j\le 5$.
Let $L_\Fg(3,0)$ be the simple affine VOA associated with the simple Lie algebra $\Fg$ of type $A_5$ at level $3$.
It is well-known (cf. \cite{FZ}) that there exist exactly $56$ (non-isomorphic) irreducible $L_\Fg(3,0)$-modules. They are irreducible highest weight modules associated with dominant integral weights.

Set 
\begin{equation}
\Lambda=\frac{2}{3}\Lambda_3=\frac{1}{3}(\alpha_1+2\alpha_2+3\alpha_3+2\alpha_4+\alpha_5).\label{Eq:hA53}
\end{equation}
Then $(\Lambda|\Lambda)=2/3$.
For a dominant integral weight $\lambda$ of $\mathfrak{g}$, set $$n_{\Lambda}(\lambda)=\min\{(\Lambda|\mu)\mid \mu\in\Pi(\lambda)\}.$$
Here $\Pi(\lambda)$ denotes the set of all weights of the irreducible $\mathfrak{g}$-module with the highest weight $\lambda$.
By Corollary \ref{Cor:LW} and $\tau\gamma(\Lambda_3)=-\Lambda_3$, we have $n_{\Lambda}(\lambda)=(\Lambda|\tau\gamma(\lambda))=-(\Lambda|\lambda)$.

\begin{lemma}\label{Lem:A53} The dominant integral weights, the lowest $L(0)$-weights of the associated $L_\Fg(3,0)$-modules and the values of $(\Lambda|\lambda)$ and $n_\Lambda(\lambda)$ are given as follows:
\begin{center}
\begin{longtable}{|c|c|c|c|c|}
\hline
Weight $\lambda$ & Lowest $L(0)$-weight of $L_{\mathfrak{g}}(3,\lambda)$  & $(\Lambda|\lambda)$ & $n_\Lambda(\lambda)$ \\
\hline \hline 
$0$ & $0$& $0$& $0$ \\\hline
$\Lambda_1$, $\Lambda_5$ &$35/108$&  $1/3$& $-1/3$   \\\hline
$\Lambda_2$, $\Lambda_4$ &$14/27$&  $2/3$& $-2/3$   \\ \hline
$\Lambda_3$ &$7/12$& $1$ & $-1$   \\ \hline
$2\Lambda_1$, $2\Lambda_5$&$20/27$ & $2/3$   &$-2/3$  \\ \hline
$2\Lambda_2$, $2\Lambda_4$&$32/27$ & $4/3$   &$-4/3$  \\ \hline
$2\Lambda_3$ &$4/3$& $2$ & $-2$   \\ \hline
$\Lambda_1+\Lambda_2$, $\Lambda_4+\Lambda_5$&$11/12$ & $1$ & $-1$   \\ \hline
$\Lambda_1+\Lambda_3$, $\Lambda_3+\Lambda_5$&$26/27$ & $4/3$   & $-4/3$ \\ \hline
$\Lambda_1+\Lambda_4$, $\Lambda_2+\Lambda_5$&$95/108$ & $1$ &  $-1$  \\ \hline
$\Lambda_1+\Lambda_5$&$2/3$ & $2/3$ & $-2/3$   \\ \hline
$\Lambda_2+\Lambda_3$, $\Lambda_3+\Lambda_4$&$131/108$ & $5/3$   & $-5/3$ \\ \hline
$\Lambda_2+\Lambda_4$&$10/9$ & $4/3$ & $-4/3$   \\ \hline
$3\Lambda_1$,$3\Lambda_5$&$5/4$ & $1$    & $-1$\\ \hline
$3\Lambda_2$,$3\Lambda_4$&$2$ & $2$    & $-2$\\ \hline
$3\Lambda_3$ &$9/4$& $3$ &  $-3$  \\ \hline
$2\Lambda_1+\Lambda_2$, $\Lambda_4+2\Lambda_5$& $38/27$& $4/3$    & $-4/3$\\ \hline
$2\Lambda_1+\Lambda_3$, $\Lambda_3+2\Lambda_5$& $155/108$& $5/3$    & $-5/3$\\ \hline
$2\Lambda_1+\Lambda_4$, $\Lambda_2+2\Lambda_5$& $4/3$& $4/3$    & $-4/3$\\ \hline
$2\Lambda_1+\Lambda_5$, $\Lambda_1+2\Lambda_5$& $119/108$& $1$   & $-1$ \\ \hline
$\Lambda_1+2\Lambda_2$, $2\Lambda_4+\Lambda_5$& $179/108$& $5/3$    & $-5/3$\\ \hline
$\Lambda_1+2\Lambda_3$, $2\Lambda_3+\Lambda_5$& $191/108$& $7/3$    & $-7/3$\\ \hline
$\Lambda_1+2\Lambda_4$, $2\Lambda_2+\Lambda_5$& $19/12$& $5/3$    & $-5/3$\\ \hline
$2\Lambda_2+\Lambda_3$, $\Lambda_3+2\Lambda_4$& $215/108$&$7/3$&$-7/3$\\\hline
$2\Lambda_2+\Lambda_4$, $\Lambda_2+2\Lambda_4$&$50/27$&$2$&$-2$\\\hline
$2\Lambda_3+\Lambda_4$, $\Lambda_2+2\Lambda_3$&$56/27$&$8/3$&$-8/3$\\\hline
$\Lambda_1+\Lambda_2+\Lambda_3$, $\Lambda_3+\Lambda_4+\Lambda_5$&$5/3$ & $2$    & $-2$\\ \hline
$\Lambda_1+\Lambda_2+\Lambda_4$, $\Lambda_2+\Lambda_4+\Lambda_5$&$167/108$ & $5/3$    & $-5/3$\\ \hline
$\Lambda_1+\Lambda_2+\Lambda_5$, $\Lambda_1+\Lambda_4+\Lambda_5$&$35/27$ & $4/3$    & $-4/3$\\ \hline
$\Lambda_1+\Lambda_3+\Lambda_4$,$\Lambda_2+\Lambda_3+\Lambda_5$ &$44/27$& $2$ &    $-2$\\ \hline
$\Lambda_1+\Lambda_3+\Lambda_5$ &$49/36$& $5/3$   &  $-5/3$\\ \hline
$\Lambda_2+\Lambda_3+\Lambda_4$ &$23/12$& $7/3$   &  $-7/3$\\ 
\hline
\end{longtable}
\end{center}
\end{lemma}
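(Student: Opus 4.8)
The plan is to verify the table one row at a time using the standard structure theory of the simple affine VOA $L_{\Fg}(3,0)$ of type $A_5$, since no conceptual input beyond the conformal-weight formula is needed. First I would record that the irreducible $L_{\Fg}(3,0)$-modules are exactly the irreducible highest-weight modules $L_{\Fg}(3,\lambda)$ with $\lambda=\sum_{i=1}^{5}a_i\Lambda_i$ dominant integral of level at most $3$; as the highest root of $A_5$ is $\alpha_1+\cdots+\alpha_5$ (all comarks equal $1$), this is the condition $a_i\in\Z_{\ge0}$ and $a_1+\cdots+a_5\le3$. A stars-and-bars count gives $\binom{8}{5}=56$ such weights, which matches the claimed number and shows that the rows of the table exhaust the list.

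For the second column I would invoke the standard conformal-weight formula: the lowest $L(0)$-weight of $L_{\Fg}(3,\lambda)$ equals $(\lambda|\lambda+2\rho)/(2(k+h^\vee))$, where $\rho=\sum_{i=1}^{5}\Lambda_i$ is the Weyl vector and $h^\vee=6$ is the dual Coxeter number of $A_5$; with $k=3$ the denominator is $18$. All the required inner products reduce to the closed form $(\Lambda_i|\Lambda_j)=i(6-j)/6$ for $i\le j$, obtained from the inverse Cartan matrix of $A_5$ in the normalization $(\alpha_i|\alpha_i)=2$. The third column is then the direct evaluation $(\Lambda|\lambda)=\frac{2}{3}(\Lambda_3|\lambda)$ using the same inner products, and the fourth column is immediate from the relation $n_\Lambda(\lambda)=-(\Lambda|\lambda)$ already deduced from Corollary \ref{Cor:LW} together with $\tau\gamma(\Lambda_3)=-\Lambda_3$.

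To cut the labour I would exploit the diagram automorphism $\tau\colon\alpha_i\mapsto\alpha_{6-i}$. Being an isometry that fixes $\Lambda_3$, hence fixes $\Lambda$, it preserves both the conformal weight $(\lambda|\lambda+2\rho)/18$ and the value $(\Lambda|\lambda)$; this is precisely why the entries occur in pairs $\{\lambda,\tau(\lambda)\}$, with the $\tau$-fixed weights listed singly. Thus only $32$ independent computations are actually required.

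The single real obstacle is bookkeeping rather than mathematics: one must carry out these inner-product evaluations accurately across all $56$ weights. I would therefore display a couple of representative cases in full---for instance $\lambda=\Lambda_1$ gives $(5/6+5)/18=35/108$ and $\lambda=\Lambda_3$ gives $(3/2+9)/18=7/12$---and note that every remaining entry is computed in exactly the same way.
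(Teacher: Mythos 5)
Your proposal is correct and is exactly the computation the paper relies on: the paper states this lemma without proof, having already derived $n_\Lambda(\lambda)=-(\Lambda|\lambda)$ from Corollary \ref{Cor:LW} and cited \cite{FZ} for the classification of the $56$ irreducible modules, so the only content is the standard evaluation of lowest $L(0)$-weights via $(\lambda|\lambda+2\rho)/2(k+h^\vee)$ with $h^\vee=6$, $k=3$, and of $(\Lambda|\lambda)=\tfrac{2}{3}(\Lambda_3|\lambda)$ using the inverse Cartan matrix of $A_5$. Your sample evaluations ($35/108$ for $\Lambda_1$, $7/12$ for $\Lambda_3$), the count $\binom{8}{5}=56$, and the reduction to $32$ computations via the diagram automorphism $\tau$ all check out.
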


\subsection{Simple affine VOA of type $D_{4}$ at level $3$}
Let $\alpha_1,\alpha_2,\alpha_3,\alpha_4$ be simple roots of type $D_4$ such that 
$(\alpha_i|\alpha_i)=2$ for $1\le i\le 4$, $(\alpha_2|\alpha_j)=-1$ and $(\alpha_i|\alpha_j)=0$ 
for $i, j\in \{1,3,4\}$.
Let $\Lambda_1,\Lambda_2,\Lambda_3,\Lambda_4$ be the fundamental weights such that $(\alpha_i|\Lambda_j)=\delta_{i,j}$ for $1\le i,j\le 4$.
Let $L_\Fg(3,0)$ be the simple affine VOA associated with the simple Lie algebra $\Fg$ of type $D_4$ at level $3$.
It is well-known (cf.\ \cite{FZ}) that there exist exactly $24$ (non-isomorphic) irreducible $L_\Fg(3,0)$-modules. They are irreducible  highest  weight modules associated with dominant integral weights. 

\begin{lemma}\label{Lem:D43} The dominant integral weights and the lowest $L(0)$-weights of the associated $L_\Fg(3,0)$-modules are given as follows:
\begin{center}
\begin{longtable}{|c|c|c|}
\hline
Weight $\lambda$ & Lowest $L(0)$-weight of $L_\mathfrak{g}(3,\lambda)$\\
\hline \hline
$0$&$0$\\\hline
$\Lambda_i$, $i\in\{1,3,4\}$&$7/18$\\\hline
$\Lambda_2$&$2/3$\\\hline
$2\Lambda_i$, $i\in\{1,3,4\}$&$8/9$\\\hline
$\Lambda_i+\Lambda_2$, $i\in\{1,3,4\}$&$7/6$\\\hline
$\Lambda_i+\Lambda_j$, $i,j\in\{1,3,4\}$, $i\neq j$&$5/6$\\\hline
$3\Lambda_i$, $i\in\{1,3,4\}$&$3/2$\\\hline
$\Lambda_1+\Lambda_3+\Lambda_4$&$4/3$\\\hline
$2\Lambda_i+\Lambda_j$, $i,j\in\{1,3,4\}$, $i\neq j$&$25/18$\\\hline
\end{longtable}
\end{center}
\end{lemma}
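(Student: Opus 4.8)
The plan is to read off each lowest $L(0)$-weight directly from the standard conformal weight formula supplied by the Sugawara construction. For a dominant integral weight $\lambda$ of a simple Lie algebra $\Fg$ at level $k$, the lowest $L(0)$-weight of $L_\Fg(k,\lambda)$ is
$$h_\lambda=\frac{(\lambda|\lambda+2\rho)}{2(k+h^\vee)},$$
where $\rho=\sum_i\Lambda_i$ is the Weyl vector, $h^\vee$ is the dual Coxeter number, and $(\cdot|\cdot)$ is normalized so that $(\alpha|\alpha)=2$ for roots $\alpha$ (cf.\ \cite{Kac}). For $\Fg$ of type $D_4$ we have $h^\vee=6$, so at level $k=3$ the denominator equals $2(3+6)=18$, in agreement with every entry of the table having denominator dividing $18$.

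First I would enumerate the level $3$ dominant integral weights. Writing $\lambda=\sum_{i=1}^4 m_i\Lambda_i$ with $m_i\in\Z_{\ge0}$, the level condition is $m_1+2m_2+m_3+m_4\le 3$, where the coefficient $2$ on $m_2$ is the mark of the central node $\alpha_2$ in the highest root $\theta=\alpha_1+2\alpha_2+\alpha_3+\alpha_4$. Counting the solutions with $m_2=0$ (namely $m_1+m_3+m_4\le 3$, giving $\binom{6}{3}=20$) and with $m_2=1$ (namely $m_1+m_3+m_4\le 1$, giving $\binom{4}{3}=4$) yields $20+4=24$, which matches the count of irreducible modules from \cite{FZ} quoted above.

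Next I would evaluate $(\lambda|\lambda+2\rho)$ using the Gram matrix of the fundamental weights. Since $D_4$ is simply-laced and the form satisfies $(\alpha_i|\alpha_i)=2$, the numbers $(\Lambda_i|\Lambda_j)$ are precisely the entries of the inverse Cartan matrix: explicitly $(\Lambda_2|\Lambda_2)=2$, while $(\Lambda_i|\Lambda_i)=1$ and $(\Lambda_i|\Lambda_2)=1$ for $i\in\{1,3,4\}$, and $(\Lambda_i|\Lambda_j)=1/2$ for distinct $i,j\in\{1,3,4\}$. Substituting each of the $24$ weights into the formula then reproduces the stated values; for instance $h_{\Lambda_2}=(2+10)/18=2/3$ and $h_{\Lambda_1+\Lambda_3+\Lambda_4}=(6+18)/18=4/3$.

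Finally, I would use triality to organize and shorten the verification. The order $3$ diagram automorphism and its square cyclically permute the three outer nodes $\alpha_1,\alpha_3,\alpha_4$ while fixing $\alpha_2$; being an isometry of the root system it preserves $(\cdot|\cdot)$ and fixes $\rho$, hence preserves $h_\lambda$. Therefore the weights grouped on a single line of the table (such as $\{\Lambda_i\mid i\in\{1,3,4\}\}$ or $\{\Lambda_i+\Lambda_2\mid i\in\{1,3,4\}\}$) lie in a common $S_3$-orbit and automatically share one lowest $L(0)$-weight, so it suffices to treat one representative per orbit. There is no genuine obstacle here: the statement is a finite, bookkeeping-level computation, and the only points demanding care are the correct normalization of the form and the correct enumeration of the level $3$ weights.
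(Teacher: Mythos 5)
Your proposal is correct: the paper offers no proof of this lemma (the table is asserted as a routine computation), and the standard Sugawara conformal weight formula $h_\lambda=(\lambda|\lambda+2\rho)/2(k+h^\vee)$ with $h^\vee=6$, $k=3$, together with the inverse Cartan matrix of $D_4$, is exactly the computation implicitly behind it. Your enumeration of the $24$ level-$3$ dominant weights, the Gram matrix entries, the sample evaluations, and the triality shortcut all check out.
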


\subsection{Inner automorphism of a holomorphic VOA with Lie algebra $A_{5,3}D_{4,3}A_{1,1}^3$}\label{S:A53-2}

Let $V$ be a strongly regular holomorphic VOA of central charge $24$ whose weight one Lie algebra has the type $A_{5,3}D_{4,3}A_{1,1}^3$.
Let $V_1=\bigoplus_{i=1}^5\mathfrak{g}_i$ be the decomposition into the direct sum of simple ideals, where 
 the types of $\mathfrak{g}_1$, $\mathfrak{g}_2$ and $\mathfrak{g}_i$ $(3\le i\le 5)$ are $A_{5,3}$, $D_{4,3}$ and $A_{1,1}$, respectively.
Let $\mathfrak{H}$ be a Cartan subalgebra of $V_1$.
Then $\mathfrak{H}\cap\mathfrak{g}_i$ is a Cartan subalgebra of $\mathfrak{g}_i$.
Let $U$ be the subVOA generated by $V_1$.
Set $k_i=3$ for $i=1,2$ and $k_j=1$ for $j=3,4,5$.
By Proposition \ref{Prop:posl}, $U\cong \bigotimes_{i=1}^5 L_{\mathfrak{g}_i}(k_i,0)$.

Let 
\begin{equation}
h=
\frac{2}{3}(\Lambda_3,0,0,0,0)\in\bigoplus_{i=1}^5(\mathfrak{H}\cap\mathfrak{g}_i).\label{Def:hA53}
\end{equation}
One can easily check that 
\begin{align}\label{h3}
\langle h|h\rangle=3\times \left(\frac{2}{3}\Lambda_3\left|\frac{2}{3}\Lambda_3\right.\right)_{|\mathfrak{g}_1}=2
\end{align}
and that for all roots $\alpha$ of the semisimple Lie algebra $V_1$, we have 
\begin{align}\label{h3-1}
(h|\alpha)\ge-1.
\end{align}

\begin{lemma}\label{Lem:or3A53} The order of $\sigma_h$ is $3$ on $V$, and the $L(0)$-weights of the irreducible $\sigma_h$-twisted (resp. $\sigma_h^{-1}$-twisted) $V$-module $V^{(h)}$ (resp. $V^{(-h)}$) belong to $\Z/3$.
\end{lemma}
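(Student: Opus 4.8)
The plan is to follow the strategy of Lemma~\ref{Lem:order2} rather than that of Lemma~\ref{Lem:or3A23}: because the $A_5$-component $\tfrac{2}{3}\Lambda_3$ of $h$ is a genuine non-integral multiple of a fundamental weight, $\sigma_h$ already acts nontrivially on $V_1$ itself, so I expect to need none of the commutant/rationality machinery (Theorems~\ref{Thm:KMi} and \ref{Thm:HKL}) that was forced upon us in the $A_{2,3}^6$ case, where $\sigma_h$ was trivial on $V_1$. Accordingly I would split the argument into two parts: first the order of $\sigma_h$, then the assertion about $L(0)$-weights.

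For the order I would establish $\sigma_h^3=1$ and $\sigma_h\neq1$ separately. Recall that $\sigma_h=\exp(-2\pi\sqrt{-1}h_{(0)})$ acts on a vector of weight $\mu$ by $\exp(-2\pi\sqrt{-1}(h|\mu))$, so $\sigma_h^3=1$ is equivalent to the spectrum of $h_{(0)}$ on $V$ lying in $\Z/3$. Since $h$ has nonzero component only in $\mathfrak{g}_1$, it suffices to control the $A_5$-weights of $V$; by Proposition~\ref{Prop:posl} every such weight $\mu$ lies in the weight lattice of $A_5$, whence $(\alpha_i|\mu)\in\Z$ for each simple root $\alpha_i$. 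Combined with the expression $\tfrac{2}{3}\Lambda_3=\tfrac{1}{3}(\alpha_1+2\alpha_2+3\alpha_3+2\alpha_4+\alpha_5)$ from \eqref{Eq:hA53}, this yields $(h|\mu)\in\Z/3$, hence $\sigma_h^3=1$. For $\sigma_h\neq1$ it is enough to exhibit one vector moved by $\sigma_h$: taking the root vector $e_{\alpha_3}\in\mathfrak{g}_1\subset V_1$, we have $(h|\alpha_3)=\tfrac{2}{3}(\Lambda_3|\alpha_3)=\tfrac{2}{3}\notin\Z$, so $\sigma_h$ scales $e_{\alpha_3}$ by a primitive cube root of unity. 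Thus $\sigma_h$ has order exactly $3$.

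For the second assertion I would apply Lemma~\ref{Lem:wtmodule} with $\sigma=\mathrm{id}$ and $M=V$, viewing $V$ as an untwisted module over itself whose $L(0)$-weights are integers and so lie in $\Z/3$. The hypotheses of that lemma are met: $\sigma(h)=h$ is automatic; $\langle h|h\rangle=2\in(2/3)\Z$ by \eqref{h3}; $h_{(0)}$ is semisimple on $V$ since $h$ lies in the Cartan subalgebra; and the spectrum of $h_{(0)}$ on $V$ lies in $\Z/3$ by the first part. By Proposition~\ref{Prop:twist} the module $V^{(h)}=M^{(h)}$ is then precisely the $\sigma_h$-twisted $V$-module, and Lemma~\ref{Lem:wtmodule} gives that its $L(0)$-weights lie in $\Z/3$. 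The case of $V^{(-h)}$ is identical, using $\langle -h|-h\rangle=\langle h|h\rangle=2$ and that the spectrum of $(-h)_{(0)}$ is the negation of that of $h_{(0)}$, hence still contained in $\Z/3$.

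I do not anticipate a substantial obstacle: the entire content reduces to the spectral computation $(h|\mu)\in\Z/3$, which is routine once one reads off $h$ in the simple-root basis from \eqref{Eq:hA53} and uses $(\alpha_i|\mu)\in\Z$ for weights $\mu$. The only point requiring care is that the $L(0)$-weight claim concerns \emph{all} weights of $V$, not merely the highest weights tabulated in Lemma~\ref{Lem:A53}; this is exactly what the root-lattice observation above handles, propagating the congruence from highest weights to all weights.
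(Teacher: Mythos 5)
Your proposal is correct and follows essentially the same route as the paper: the paper likewise deduces $\sigma_h^3=1$ from the root-basis expression \eqref{Eq:hA53} (giving $(h|\lambda)\in\Z/3$ for all weights $\lambda$), notes that $\sigma_h$ has order $3$ on $V_1$ itself (so no commutant machinery is needed, unlike the $A_{2,3}^6$ case), and obtains the twisted-module weight claim from Lemma \ref{Lem:wtmodule} together with \eqref{h3}. Your only additions --- exhibiting $e_{\alpha_3}$ explicitly and verifying the spectrum condition via the root lattice rather than citing the table in Lemma \ref{Lem:A53} --- are minor elaborations of the same argument.
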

\begin{proof}
It follows from \eqref{Eq:hA53} that $(h|\lambda)\in\Z/3$ for any weights $\lambda$ of the semisimple Lie algebra $V_1$.
Hence $(\sigma_h)^3=1$.
In addition, $\sigma_h$ is an automorphism of order $3$ on $V_1$, so is it on $V$.
The latter assertion follows from Lemmas \ref{Lem:wtmodule} and \ref{Lem:A53} and \eqref{h3}.
\end{proof}

\begin{proposition}\label{Prop:lowA53} The lowest $L(0)$-weight of the irreducible $\sigma_h$-twisted (resp. $\sigma_{h}^{-1}$-twisted) $V$-module $V^{(h)}$ (resp. $V^{(-h)}$) is $1$.
In particular, $(V^{(\pm h)})_{i}=0$ for $i\in\{1/3,2/3\}$.
\end{proposition}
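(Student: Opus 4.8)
The plan is to follow the proofs of Propositions~\ref{Prop:low} and~\ref{Prop:lowA23} and reduce the statement to a lower bound on the twisted lowest weights of the irreducible $U$-submodules of $V$, where $U$ is the subVOA generated by $V_1$. Let $M\cong\bigotimes_{i=1}^5L_{\mathfrak{g}_i}(k_i,\lambda_i)$ be such a submodule, let $\ell_i$ be the lowest $L(0)$-weight of $L_{\mathfrak{g}_i}(k_i,\lambda_i)$, and set $\ell=\sum_{i=1}^5\ell_i$, the lowest $L(0)$-weight of $M$; write $\ell^{(h)}$ for the lowest $L(0)$-weight of $M^{(h)}$. It suffices to prove $\ell^{(h)}\ge1$ for every $M$, for then all weights of $V^{(h)}$ are $\ge1$, with the trivial summand (all $\lambda_i=0$) realizing $1$. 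Since $h$ is supported on the $A_{5,3}$-factor $\mathfrak{g}_1$, the inequality \eqref{h3-1} allows me to invoke Lemma~\ref{Lem:lowestwt}; combining \eqref{Eq:twisttop} with \eqref{Def:hA53}, \eqref{h3} and the identity $n_\Lambda(\lambda_1)=-(\Lambda|\lambda_1)$ gives
\[
\ell^{(h)}=\ell+n_\Lambda(\lambda_1)+1=\ell-(\Lambda|\lambda_1)+1,
\]
so the claim becomes $\ell\ge(\Lambda|\lambda_1)$.

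For the trivial module $\ell=(\Lambda|0)=0$ and $\ell^{(h)}=1$. For a nontrivial $M$ the decisive input is that $U$ is a \emph{full} subVOA of $V$ by Proposition~\ref{Prop:V1}, so its submodules in $V$ have integral $L(0)$-weights; as $V_0=\C\1$ and $V_1=U_1$, such an $M$ meets neither $V_0$ nor $V_1$, whence $\ell\in\Z_{\ge2}$. Inspecting Lemma~\ref{Lem:A53}, one has $(\Lambda|\lambda_1)\le2$ for all dominant weights $\lambda_1$ of $A_{5,3}$ except the eight weights $3\Lambda_3,\ \Lambda_1+2\Lambda_3,\ 2\Lambda_3+\Lambda_5,\ 2\Lambda_2+\Lambda_3,\ \Lambda_3+2\Lambda_4,\ 2\Lambda_3+\Lambda_4,\ \Lambda_2+2\Lambda_3,\ \Lambda_2+\Lambda_3+\Lambda_4$, for which $(\Lambda|\lambda_1)\in\{7/3,8/3,3\}$. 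When $(\Lambda|\lambda_1)\le2$ the bound $\ell\ge2$ already gives $\ell\ge(\Lambda|\lambda_1)$, so only the exceptional weights remain.

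For each exceptional weight it suffices to prove $\ell\ge3$, since there $(\Lambda|\lambda_1)\le3$. Write $\ell=\ell_1+s$ with $s=\sum_{i=2}^5\ell_i\ge0$. For $3\Lambda_3$, $2\Lambda_3+\Lambda_4$ and $\Lambda_2+2\Lambda_3$ the $A_{5,3}$-conformal weight $\ell_1$ is $9/4$ or $56/27$, hence $\ell\ge\ell_1>2$ and so $\ell\ge3$. For the remaining five weights $\ell_1\in(7/4,2)$ (namely $191/108$, $215/108$ or $207/108$), so $\ell=2$ would force $s=2-\ell_1\in(0,1/4)$; but by Lemmas~\ref{Lem:D43} and~\ref{Lem:A11} every nonzero conformal weight of $\mathfrak{g}_2=D_{4,3}$ is at least $7/18>1/4$ while that of each $A_{1,1}$-factor equals $1/4$, so a nonzero $s$ is at least $1/4$. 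Thus $s\in(0,1/4)$ is impossible, $\ell\ne2$, and again $\ell\ge3$. In every case $\ell^{(h)}=\ell-(\Lambda|\lambda_1)+1\ge1$; the computation for $-h$ is identical because $\langle-h|-h\rangle=\langle h|h\rangle$ and the relevant minimum over $\Pi(\lambda_1)$ is again $-(\Lambda|\lambda_1)$.

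The main obstacle, and what separates this case from the $E_{6,3}G_{2,1}^3$ and $A_{2,3}^6$ cases, is that $A_{5,3}$ carries highest weights with $(\Lambda|\lambda_1)>2$ (up to $3$ at $3\Lambda_3$), so the crude estimate $\ell\ge2$ no longer closes the bound. The resolution is to combine the integrality of $\ell$, coming from $U$ being a full subVOA, with the gap below $1/4$ in the conformal-weight spectrum of the complementary factors $D_{4,3}$ and $A_{1,1}$, upgrading $\ell\ge2$ to $\ell\ge3$ precisely on the finitely many problematic weights.
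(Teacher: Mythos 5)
Your proof is correct and follows essentially the same route as the paper's: the same reduction via Lemma~\ref{Lem:lowestwt} to $\ell^{(h)}=\ell+n_\Lambda(\lambda_1)+1$, the same eight exceptional weights from Lemma~\ref{Lem:A53}, and the same closing combination of integrality of $\ell$ with the $1/4$-gap in the conformal weights of the $D_{4,3}$ and $A_{1,1}$ factors. In fact your bucketing of the exceptional weights is slightly more accurate than the paper's own: the paper asserts that the $A_{5,3}$-conformal weight exceeds $2$ for $2\Lambda_2+\Lambda_3$ and $\Lambda_3+2\Lambda_4$, whereas its own table gives $215/108<2$ for these; you correctly route those two weights through the gap argument instead, which repairs that slip without changing the overall structure of the proof.
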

\begin{proof} Let $M\cong \bigotimes_{i=1}^5L_{\mathfrak{g}_i}(k_i,\lambda_i)$ be an irreducible $U$-submodule of $V$.
Let $\ell$ and $\ell^{(h)}$ be the lowest $L(0)$-weights of $M$ and of $M^{(h)}$, respectively.
By \eqref{h3-1}, we can apply Lemma \ref{Lem:lowestwt} to our case.
By \eqref{Eq:twisttop}, \eqref{Def:hA53} and \eqref{h3}, we have
\begin{align*}
\ell^{(h)}=\ell+n_{\Lambda}(\lambda_1)+1,
\end{align*}
where $\Lambda=(2/3)\Lambda_3$ in $\mathfrak{g}_1$.
If $\lambda_i=0$ for $1\le i\le 5$, then $\ell=0$ and $\ell^{(h)}=1$.
Assume $\lambda_i\neq0$ for some $1\le i\le 5$.
Then $\ell\ge2$.
Hence if $n_{\Lambda}(\lambda_1)\ge-2$, then $\ell^{(h)}\ge1$.
By Lemma \ref{Lem:A53}, $n_{\Lambda}(\lambda_1)<-2$ if and only if 
\begin{equation}\label{lambdaA5}
\lambda_1\in\{3\Lambda_3,\Lambda_1+2\Lambda_3,2\Lambda_3+\Lambda_5,2\Lambda_2+\Lambda_3,\Lambda_3+2\Lambda_4,2\Lambda_3+\Lambda_4,\Lambda_2+2\Lambda_3,\Lambda_2+\Lambda_3+\Lambda_4\},
\end{equation}
and $-3\le n_{\Lambda}(\lambda_1)<-2$ for every $\lambda_1$ in \eqref{lambdaA5}.
Clearly, $\ell$ is integral.
Recall that $\ell$ is the sum of lowest $L(0)$-weights of $L_{\mathfrak{g}_i}(k_i,\lambda_i)$, $1\le i\le 5$.
By Lemma \ref{Lem:A53}, if $$\lambda_1\in\{3\Lambda_3,2\Lambda_2+\Lambda_3,\Lambda_3+2\Lambda_4,2\Lambda_3+\Lambda_4,\Lambda_2+2\Lambda_3\},$$
then the lowest $L(0)$-weight of $L_{\mathfrak{g}_1}(\lambda_1)$ is greater than $2$, and hence $\ell\ge3$.
For the other $\lambda$,  there are no weights $\lambda_i$ $(2\le i\le 5)$ such that the lowest $L(0)$-weight 
of $\bigotimes_{i=1}^5L_{\mathfrak{g}_i}(k_i,\lambda_i)$ is $2$ by Lemmas \ref{Lem:A11}, \ref{Lem:A53} and 
\ref{Lem:D43}.
Hence, for $\lambda_1$ in \eqref{lambdaA5}, we have $\ell\ge3$.
Thus $\ell^{(h)}\ge1$.

One can also prove $\ell^{(-h)}\ge1$ by the same argument.
\end{proof}

\subsection{Identification of the Lie algebra: Case $D_{4,1}^6$}\label{S:A53-3}
Thanks to Proposition \ref{Prop:lowA53}, we can apply Theorem \ref{Thm:EMS}  to $V$ and 
$\sigma_h$, and obtain the strongly regular 
holomorphic VOA $\tilde{V}=\tilde{V}_{\sigma_h}$ of central charge $24$.
By the definition of $h$, we obtain the following:

\begin{proposition}\label{Prop:fixedA53} 
The Lie algebra structure of $(V^{\sigma_h})_1$ is $A_{2,3}^2U(1)D_{4,3}A_{1,1}^3$ and $\dim(V^{\sigma_h})_1=54$.
\end{proposition}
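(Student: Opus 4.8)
The plan is to identify $(V^{\sigma_h})_1=(V_1)^{\sigma_h}$ by computing the fixed points of $\sigma_h$ on each simple ideal $\mathfrak{g}_i$ separately, using that $\sigma_h$ acts on $V_1$ as $\exp(-2\pi\sqrt{-1}\,\ad h)$ and, being inner, preserves each $\mathfrak{g}_i$. First I would observe that $h=\frac{2}{3}(\Lambda_3,0,0,0,0)$ has zero component on $\mathfrak{g}_2,\dots,\mathfrak{g}_5$, so $\ad h$ vanishes there and $\sigma_h$ fixes $\mathfrak{g}_2\oplus\mathfrak{g}_3\oplus\mathfrak{g}_4\oplus\mathfrak{g}_5$ pointwise. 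This contributes exactly the summand of type $D_{4,3}A_{1,1}^3$ to $(V^{\sigma_h})_1$, with no change of level.

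The next step is to analyze the $A_5$-factor $\mathfrak{g}_1$. Here $\sigma_h$ multiplies a root vector $e_\alpha$ by $\exp(-2\pi\sqrt{-1}(h_1|\alpha))$, where $h_1=\frac{2}{3}\Lambda_3$ and $(\Lambda_3|\alpha)$ equals the coefficient $c_3$ of the simple root $\alpha_3$ in $\alpha$. In the root system of type $A_5$ every root satisfies $c_3\in\{0,\pm1\}$, and $e_\alpha$ is fixed precisely when $\tfrac{2}{3}c_3\in\Z$, that is, when $c_3=0$. The roots with $c_3=0$ are exactly those supported on $\{\alpha_1,\alpha_2\}$ or on $\{\alpha_4,\alpha_5\}$, and these form a root subsystem of type $A_2\times A_2$. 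Since $\sigma_h$ also fixes the rank-$5$ Cartan subalgebra $\mathfrak{H}\cap\mathfrak{g}_1$ pointwise, I conclude that $\mathfrak{g}_1^{\sigma_h}$ is reductive of type $A_2^2\oplus U(1)$.

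To fix the levels, I would invoke Proposition \ref{Prop:con3}(2). By Lemma \ref{Lem:or3A53} the automorphism $\sigma_h$ has order $3$; it fixes $\mathfrak{g}_1$, its restriction to $\mathfrak{g}_1$ is inner (indeed $-3h_1=-2\Lambda_3$ lies in the weight lattice), and $\mathfrak{g}_1$ has type $A_5$. Proposition \ref{Prop:con3}(2) then forces each $A_2$ ideal of $\mathfrak{g}_1^{\sigma_h}$ to have the same level $k_1=3$ as $\mathfrak{g}_1$; alternatively this follows directly from Proposition \ref{Prop:level}, since a root of such an $A_2$ ideal is a long root of both $\mathfrak{g}_1$ and the ideal. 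Hence $\mathfrak{g}_1^{\sigma_h}$ has type $A_{2,3}^2U(1)$.

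Combining the two contributions yields $(V^{\sigma_h})_1\cong A_{2,3}^2U(1)D_{4,3}A_{1,1}^3$, and the dimension count $(2\cdot 8+1)+28+3\cdot 3=54$ gives $\dim(V^{\sigma_h})_1=54$. The whole argument is essentially a routine computation; the only point requiring care is the explicit determination of the $c_3=0$ subsystem of $A_5$ (together with checking that the ambient Cartan stays fixed so that a one-dimensional center survives), after which the level assertion is immediate from the cited structural results.
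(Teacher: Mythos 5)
Your proposal is correct and follows essentially the same route as the paper, which states the proposition as an immediate consequence of the definition of $h$: since $h=\tfrac23(\Lambda_3,0,0,0,0)$, the automorphism $\sigma_h$ fixes $\mathfrak{g}_2\oplus\cdots\oplus\mathfrak{g}_5$ pointwise and acts on $\mathfrak{g}_1$ with fixed points the Cartan subalgebra plus the root spaces for roots with zero $\alpha_3$-coefficient, giving $A_{2}^2U(1)$ with levels preserved (Propositions \ref{Prop:level} and \ref{Prop:con3}(2)). Your write-up simply fills in the routine details the paper leaves implicit, and the dimension count $17+28+9=54$ is correct.
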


Let us identify the Lie algebra structure of  $\tilde{V}_1$.

\begin{proposition}\label{Prop:D462} 
The Lie algebra structure of $\tilde{V}_1$ is $D_{4,1}^6$.
In particular, $\tilde{V}$ is isomorphic to the lattice VOA associated with the Niemeier lattice $Ni(D_{4}^6)$.
\end{proposition}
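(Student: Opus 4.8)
The plan is to mirror the proof of Proposition~\ref{Prop:D46}: I would first compute $\dim\tilde{V}_1$ from the dimension formula, use the ratio $h^\vee/k$ to cut down the list of possible Lie algebra types, and then isolate $D_{4,1}^6$ by means of the order $3$ automorphism coming from the $\Z_3$-grading of $\tilde{V}$ over $V^{\sigma_h}$.

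Since $\dim\tilde{V}_1>0$ and $\tilde{V}_1$ is non-abelian, Proposition~\ref{Prop:V1} shows $\tilde{V}_1$ is semisimple. By Proposition~\ref{Prop:fixedA53} we have $\dim(V^{\sigma_h})_1=54$, and by Proposition~\ref{Prop:lowA53} the spaces $(V^{(\pm h)})_{1/3}$ and $(V^{(\pm h)})_{2/3}$ vanish, so Theorem~\ref{Thm:Dimformula} gives $\dim\tilde{V}_1=4\times54-72+24=168$. Proposition~\ref{Prop:V1} then forces $h^\vee/k=6$ for every simple ideal, so each dual Coxeter number is a multiple of $6$; matching the total dimension $168$ leaves exactly the same four types as in the $A_{2,3}^6$ case, namely $A_{5,1}^4D_{4,1}$, $D_{4,1}^6$, $A_{5,1}E_{7,3}$ and $A_{5,1}C_{5,1}E_{6,2}$.

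The decisive and hardest step is to eliminate the three unwanted types. Here the $\Z_3$-grading of $\tilde{V}$ (Theorem~\ref{Thm:EMS}) supplies an order $3$ automorphism $\tau$ of $\tilde{V}$ with $(\tilde{V}^\tau)_1=(V^{\sigma_h})_1$, which by Proposition~\ref{Prop:fixedA53} is of type $A_{2,3}^2U(1)D_{4,3}A_{1,1}^3$. Thus $\tau$ acts as an order $3$ automorphism of the semisimple Lie algebra $\tilde{V}_1$ whose fixed-point subalgebra has rank $12$, central rank $1$, and level $3$ semisimple part $A_{2,3}^2D_{4,3}$ of rank $8$. I would feed these data into Proposition~\ref{Prop:con3}, recalling that only a $D_4$ ideal admits a non-inner order $3$ automorphism. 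For $A_{5,1}C_{5,1}E_{6,2}$ no ideal can be permuted or acted on by a non-inner automorphism, so $\tau$ is inner on each factor and the fixed-point rank is the full rank $16\neq12$. For $A_{5,1}E_{7,3}$ the automorphism is again inner on both ideals, so the level $3$ fixed factors can only come from $E_{7,3}$, of rank $7$, which cannot accommodate the rank $8$ subalgebra $A_{2,3}^2D_{4,3}$. For $A_{5,1}^4D_{4,1}$ lowering the fixed-point rank to $12$ requires cyclically permuting three of the $A_{5,1}$ factors, which produces an $A_{5,3}$ ideal absent from $A_{2,3}^2U(1)D_{4,3}A_{1,1}^3$. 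Only $D_{4,1}^6$ survives: a $3$-cycle of three $D_{4,1}$'s gives $D_{4,3}$, two non-inner order $3$ automorphisms give $A_{2,3}^2$ by Proposition~\ref{Prop:con3}(3), and an inner order $3$ automorphism of the last $D_{4,1}$ gives $A_{1,1}^3U(1)$ by Lemma~\ref{Lem:conD4}(2).

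Finally, since $\tilde{V}_1\cong D_{4,1}^6$ has Lie rank $24$, Proposition~\ref{P:NVOA} identifies $\tilde{V}$ with a Niemeier lattice VOA, and its weight one Lie algebra $D_{4,1}^6$ pins this lattice down to $Ni(D_4^6)$. The point requiring the most care is the rank-and-level bookkeeping in the elimination step, where one must track separately the level $1$ part, the level $3$ part, and the central $U(1)$ of the fixed-point subalgebra; it is exactly the impossibility of fitting the rank $8$ level $3$ part $A_{2,3}^2D_{4,3}$ into the available ideals that rules out every type other than $D_{4,1}^6$.
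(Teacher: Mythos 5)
Your proof follows the paper's argument step for step: the same dimension-formula computation $\dim\tilde{V}_1=4\times 54-72+24=168$, the same ratio $h^\vee/k=6$ and resulting list of four candidate types, the same use of the order $3$ automorphism arising from the $\Z_3$-grading of $\tilde{V}$ over $V^{\sigma_h}$ to force a fixed-point subalgebra of type $A_{2,3}^2U(1)D_{4,3}A_{1,1}^3$, and the same final appeal to Proposition~\ref{P:NVOA}. The only difference is that you spell out the rank-and-level bookkeeping behind the elimination that the paper compresses into ``one can easily see'' (and in doing so you correctly write $C_{5,1}$ where the paper's list has the typo $C_{5,2}$), so your proposal is correct and essentially identical in approach.
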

\begin{proof} 
By Proposition \ref{Prop:V1}, the Lie algebra $\tilde{V}_1$ is semisimple.
By Proposition \ref{Prop:fixedA53}, we have $\dim (V^{\sigma_h})_1=54$, and 
by Proposition \ref{Prop:lowA53}, we have $(V^{(\pm h)})_{i}=0$ for $i=1/3,2/3$.
By Theorem \ref{Thm:Dimformula}, $$\dim\tilde{V}_1=4\times \dim (V^{\sigma_h})_1-\dim V_1+24=168;$$
hence we obtain the ratio $h^\vee/k=6$ by Proposition \ref{Prop:V1}.
By the same argument as in the proof of Proposition \ref{Prop:D46}, $\tilde{V}_1$ is of the type $A_{5,1}^4D_{4,1}$, $D_{4,1}^6$, $A_{5,1}E_{7,3}$ or $A_{5,1}C_{5,2}E_{6,2}$.
Since $\tilde{V}$ is a simple current extension of $V^{\sigma_h}$ graded by $\Z_3$,
there exists an order $3$ automorphism of the semisimple Lie algebra $\tilde{V}_1$ such that the fixed-point subspace has the Lie algebra structure $A_{2,3}^2D_{4,3}U(1)A_{1,1}^3$.
One can easily see that the only possible type of $\tilde{V}_1$ is $D_{4,1}^6$.
By Proposition \ref{P:NVOA}, we have proved this proposition.
\end{proof}

\subsection{Main theorem for the Lie algebra $A_{5,3}D_{4,3}A_{1,1}^3$}
Set $N=Ni(D_4^6)$ and let $\sigma_4$ be the order $3$ automorphism of $V_N$ defined in Section \ref{S:D46}.
Then $\sigma_4$ satisfies Condition (I) in Section \ref{S:Orb} and the holomorphic VOA $(\widetilde{V_{N}})_{\sigma_4}$ has the weight one Lie algebra of type $A_{5,3}D_{4,3}A_{1,1}^3$ (\cite{SS}).
Note that the Lie algebra $(V_N^{\sigma_4})_1$ has the type $A_{2,3}^2U(1)D_{4,3}A_{1,1}^3$.
We now prove the main theorem in this section.

\begin{theorem} Let $V$ be strongly regular holomorphic VOAs of central charge $24$ such that the Lie algebra structure of $V_1$ is $A_{5,3}D_{4,3}A_{1,1}^3$.
Then $V$ is isomorphic to the holomorphic VOA $(\widetilde{V_N})_{\sigma_4}$.
In particular, the structure of a strongly regular holomorphic VOA of central charge $24$ is uniquely determined by its weight one Lie algebra if the Lie algebra has the type $A_{5,3}D_{4,3}A_{1,1}^3$.
\end{theorem}

\begin{proof} 
It suffices to verify the hypotheses in Section \ref{sec:4.2} for 
$\g=A_{5,3}D_{4,3}A_{1,1}^3$, $\mathfrak{p}=A_{2,3}^2U(1)D_{4,3}A_{1,1}^3$, $n=3$ and 
$W=V_{Ni(D_4^6)}$.
Take $h$ as in \eqref{Def:hA53} and set $\sigma=\sigma_h$.
Then the order of $\sigma$ is $3$ by Lemma \ref{Lem:or3A53} and the former hypotheses (a) and (b) hold by Propositions \ref{Prop:lowA53}, \ref{Prop:fixedA53} and \ref{Prop:D462}.
The latter hypothesis about the uniqueness of the conjugacy class follows from Proposition \ref{Prop:ConjD4} (2).
\end{proof}

\paragraph{\bf Acknowledgement.} The authors wish to thank Nils Scheithauer for pointing out some gap in early version of this article.
They also wish to thank Kazuya Kawasetsu for helpful comments and the referees for useful suggestions.

\end{document}